\documentclass[12pt]{amsart}  

\usepackage[latin1]{inputenc}
\usepackage{amsmath} 
\usepackage{amsfonts}
\usepackage{amssymb}
\usepackage{stmaryrd}
\usepackage{latexsym} 
\usepackage{graphicx}
\usepackage{subfigure}
\usepackage{color}
\usepackage{hyperref}
\usepackage{verbatim}
\usepackage[all]{xy}
\usepackage{graphics}
\usepackage{pdfsync}
\usepackage{xcolor}
\usepackage{tikz}
\usepackage{bm}
\usetikzlibrary{arrows.meta}
\usetikzlibrary{shapes.geometric}
\usetikzlibrary{calc,math}
\usetikzlibrary{external}
\usetikzlibrary{decorations.markings}


\oddsidemargin=0in
\evensidemargin=0in
\textwidth=6.50in             

\headheight=10pt
\headsep=10pt
\topmargin=.5in
\textheight=8in

\theoremstyle{definition}
\newtheorem{theorem}{Theorem}[section]
\newtheorem{proposition}[theorem]{Proposition}

\newtheorem{corollary}[theorem]{Corollary}

\newtheorem{conjecture}[theorem]{Conjecture}
\newtheorem{definition}[theorem]{Definition}
\newtheorem{example}[theorem]{Example}

\newtheorem{definition/theorem}[theorem]{Definition/Theorem}

\theoremstyle{remark}
\newtheorem{remark}[theorem]{Remark}

\numberwithin{equation}{section}
\setcounter{MaxMatrixCols}{20}





\newlength\cellsize \setlength\cellsize{15\unitlength}
\savebox2{%
\begin{picture}(15,15)
\put(0,0){\line(1,0){15}}
\put(0,0){\line(0,1){15}}
\put(15,0){\line(0,1){15}}
\put(0,15){\line(1,0){15}}
\end{picture}}
\newcommand\cellify[1]{\def\thearg{#1}\def\nothing{}%
\ifx\thearg\nothing
\vrule width0pt height\cellsize depth0pt\else
\hbox to 0pt{\usebox2\hss}\fi%
\vbox to 15\unitlength{
\vss
\hbox to 15\unitlength{\hss$#1$\hss}
\vss}}
\newcommand\tableau[1]{\vtop{\let\\=\cr
\setlength\baselineskip{-16000pt}
\setlength\lineskiplimit{16000pt}
\setlength\lineskip{0pt}
\halign{&\cellify{##}\cr#1\crcr}}}
\savebox3{%
\begin{picture}(15,15)
\put(0,0){\line(1,0){15}}
\put(0,0){\line(0,1){15}}
\put(15,0){\line(0,1){15}}
\put(0,15){\line(1,0){15}}
\end{picture}}
\newcommand\expath[1]{%
\hbox to 0pt{\usebox3\hss}%
\vbox to 15\unitlength{
\vss
\hbox to 15\unitlength{\hss$#1$\hss}
\vss}}
\newcommand\bas[1]{\omit \vbox to \cellsize{ \vss \hbox to \cellsize{\hss$#1$\hss} \vss}}

\DeclareMathOperator{\FT}{FT}
\DeclareMathOperator\ftrip{\mathcal{F}}
\DeclareMathOperator{\sign}{sign}
\DeclareMathOperator{\sort}{sort}
\DeclareMathOperator\type{type}

\DeclareMathOperator{\ttrip}{\mathcal{T}}

\DeclareMathOperator{\ctrip}{\mathcal{C}}
\DeclareMathOperator{\strip}{\mathcal{S}}
\DeclareMathOperator{\ST}{ST}
\DeclareMathOperator{\trace}{trace}

\NewDocumentCommand{\vertexs}{O{black} O{1cm} O{above} m O{(0,0)} m O{}}{
\path let \p1 = #6, \p2=#5 in node[style={draw,fill,color=#1,circle,minimum size=6pt,inner sep=0},label={[text=#1, label distance=#2]#3:}] (v#7#4) at (\x1+\x2,\y1+\y2){};}
\NewDocumentCommand{\vertex}{O{black} O{-3pt} O{above} m O{(0,0)} m O{}}{
\path let \p1 = #6, \p2=#5 in node[style={draw,fill,color=#1,circle,minimum size=6pt,inner sep=0},label={[text=#1, label distance=#2]#3:#4}] (v#7#4) at (\x1+\x2,\y1+\y2){};}
\NewDocumentCommand{\vertexl}{O{black} O{0pt} O{0pt} m O{(0,0)} m O{}}{
\path let \p1 = #6, \p2=#5 in node[style={draw,fill,color=#1,circle,minimum size=6pt,inner sep=0},label={[text=#1, shift={(#2, #3)}]#4}] (v#7#4) at (\x1+\x2,\y1+\y2){};}

\usepackage[backend=bibtex]{biblatex}
\addbibresource{GluingVertices.bib}

\begin{document}

\title[The chromatic symmetric function of graphs glued at a single vertex]{The chromatic symmetric function of graphs glued at a single vertex}

\author{Foster Tom}
\address{Department of Mathematics, Massachusetts Institute of Technology, Cambridge, MA 02139}
\email{ftom@mit.edu}

\author{Aarush Vailaya}
\address{The Harker School, San Jose, CA 95124}
\email{vailaya.aarush@gmail.com}

\subjclass[2020]{Primary 05E05; Secondary 05E10, 05C15}
\keywords{chromatic quasisymmetric function, Stanley--Stembridge conjecture}


\begin{abstract}
We describe how the chromatic symmetric function of two graphs glued at a single vertex can be expressed as a matrix multiplication using certain information of the two individual graphs. We then prove new $e$-positivity results by using a connection between forest triples, defined by the first author, and Hikita's probabilities associated to standard Young tableaux. Specifically, we prove that gluing a sequence of unit interval graphs and cycles results in an $e$-positive graph. We also prove $e$-positivity for a graph obtained by gluing the first and last vertices of such a sequence. This generalizes $e$-positivity of cycle-chord graphs and supports Ellzey's conjectured $e$-positivity for proper circular arc digraphs.
\end{abstract}


\maketitle
\section{Introduction}\label{section:introduction}

The Stanley--Stembridge conjecture \cite{chromsym, stanstem} is a monumental problem in algebraic combinatorics. It states that the chromatic symmetric function $X_G(\bm x)$ of a $(\bm 3+\bm 1)$-free graph $G$ is $e$-positive. By a reduction due to Guay-Paquet \cite{stanstemreduction}, it suffices to prove $e$-positivity whenever $G$ is a unit interval graph. Shareshian and Wachs \cite{chromposquasi} refined this conjecture by introducing the chromatic quasisymmetric function $X_G(\bm x;q)$, showing that it is symmetric whenever $G$ is a unit interval graph, and conjecturing that in this case, its $e$-coefficients are positive polynomials in $q$. Hikita \cite{stanstemproof} proved the Stanley--Stembridge conjecture by giving a combinatorial formula for $X_G(\bm x;q)$ in terms of probabilities associated to standard Young tableaux, whose summands are all nonnegative when $q=1$. The Shareshian--Wachs conjecture is still open and has seen keen interest because of its connection to Hessenberg varieties \cite{chromhesssplitting, dothessenberg, chromhessequi, hessenberghopf, stanstemhess, chromquasihessenberg} and Hecke algebras \cite{chromhecke, chromcharacters}. Ellzey \cite{chromquasidi} generalized further by defining a chromatic quasisymmetric function $X_{\vec G}(\bm x;q)$ for any directed graph $\vec G$, showing symmetry whenever $\vec G$ is a proper circular arc digraph, and conjecturing $e$-positivity in this case.\\

In this paper, we consider the operation of gluing graphs $G$ and $H$ together at a single vertex. In Section \ref{section:ftmatrix}, we show that the resulting chromatic symmetric function can be expressed as a matrix multiplication using certain information about $G$ and $H$. In Section \ref{section:tabmatrix}, we prove a surprising connection between forest triples, defined by the first author \cite{qforesttriples}, and Hikita's probabilities on standard Young tableaux (Theorem \ref{thm:samemats}). From this connection and Hikita's remarkable breakthrough, we prove $e$-positivity of graphs obtained from gluing a sequence of unit interval graphs and cycles (Corollary \ref{cor:uicycle}). In Section \ref{section:traceresult}, we prove that if $G^\circ$ is obtained by gluing two vertices of a graph $G$, then $X_{G^\circ}(\bm x)$ is equal to the trace of the matrix corresponding to $G$ (Theorem \ref{thm:trace:trace}). In particular, we prove $e$-positivity for graphs obtained by gluing the first and last vertices of a unit interval graph (Corollary \ref{cor:circulararcgraph}). These graphs are proper circular arc graphs, lending support to Ellzey's conjecture. This also generalizes $e$-positivity of cycle-chord graphs (Corollary~\ref{cor:noncrossingcyclechord}).
\section{Background}\label{section:background}

All graphs in this paper will have vertex set $[n]=\{1,\ldots,n\}$ for some $n$. For graphs $G=([n],E)$ and $H=([n'],E')$, we define the \emph{sum}
\begin{equation*}
G+H=([n+n'-1],\{E\cup \{\{i+n-1,j+n-1\}: \ \{i,j\}\in E'\}).
\end{equation*} 
Informally, we glue vertex $n$ of $G$ to vertex $1$ of $H$. The main focus of this paper is to study how the chromatic symmetric function behaves under this operation.

\begin{figure}
$$\begin{tikzpicture}
\node at (-0.75,0) {$G=$};
\draw (0,0)--(0.5,0.866)--(1.5,0.866)--(2,0)--(1.5,-0.866)--(0.5,-0.866)--(0,0)--(1.5,0.866)--(1.5,-0.866)--(0,0) (0.5,0.866)--(2,0)--(0.5,-0.866)--(0.5,0.866)--(1.5,-0.866) (0.5,-0.866)--(1.5,0.866);
\filldraw (0,0) circle (3pt) node[align=center,below] (1){1};
\filldraw (0.5,0.866) circle (3pt) node[align=center,above] (2){2};
\filldraw (0.5,-0.866) circle (3pt) node[align=center,below] (3){3};
\filldraw (1.5,0.866) circle (3pt) node[align=center,above] (4){4};
\filldraw (1.5,-0.866) circle (3pt) node[align=center,below] (5){5};
\filldraw [color=red] (2,0) circle (3pt) node[align=center,below] (6){6};
\node at (3.75,0) {$H=$};
\draw (4.5,0)--(5,0.866)--(5.5,0)--(6,0.866)--(6.5,0)--(4.5,0) (5,0.866)--(6,0.866);
\filldraw [color=red] (4.5,0) circle (3pt) node[align=center,below] (1){1};
\filldraw (5,0.866) circle (3pt) node[align=center,above] (2){2};
\filldraw (5.5,0) circle (3pt) node[align=center,below] (3){3};
\filldraw (6,0.866) circle (3pt) node[align=center,above] (4){4};
\filldraw (6.5,0) circle (3pt) node[align=center,below] (5){5};
\node at (8.75,0) {$G+H=$};
\draw (10,0)--(10.5,0.866)--(11.5,0.866)--(12,0)--(11.5,-0.866)--(10.5,-0.866)--(10,0)--(11.5,0.866)--(11.5,-0.866)--(10,0) (10.5,0.866)--(12,0)--(10.5,-0.866)--(10.5,0.866)--(11.5,-0.866) (10.5,-0.866)--(11.5,0.866) (12,0)--(12.5,0.866)--(13,0)--(13.5,0.866)--(14,0)--(12,0) (12.5,0.866)--(13.5,0.866);
\filldraw (10,0) circle (3pt) node[align=center,below] (1){1};
\filldraw (10.5,0.866) circle (3pt) node[align=center,above] (2){2};
\filldraw (10.5,-0.866) circle (3pt) node[align=center,below] (3){3};
\filldraw (11.5,0.866) circle (3pt) node[align=center,above] (4){4};
\filldraw (11.5,-0.866) circle (3pt) node[align=center,below] (5){5};
\filldraw [color=red] (12,0) circle (3pt) node[align=center,below] (6){6};
\filldraw (12.5,0.866) circle (3pt) node[align=center,above] (7){7};
\filldraw (13,0) circle (3pt) node[align=center,below] (8){8};
\filldraw (13.5,0.866) circle (3pt) node[align=center,above] (9){9};
\filldraw (14,0) circle (3pt) node[align=center,below] (10){10};
\end{tikzpicture}$$
\caption{\label{fig:glue} Graphs $G$ and $H$ glued at a single vertex.}
\end{figure}
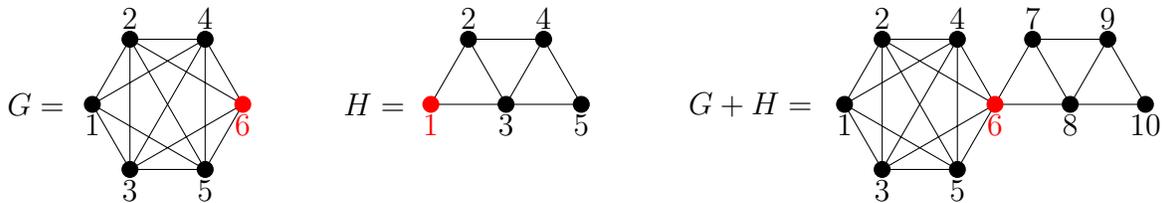

\subsection{Chromatic symmetric functions and unit interval graphs}
A \emph{proper colouring} of $G=([n],E)$ is a function $\kappa:[n]\to\mathbb P=\{1,2,3,\ldots\}$ such that $\kappa(u)\neq \kappa(v)$ whenever $\{u,v\}\in E$. The \emph{chromatic symmetric function} of $G$ is \cite[Definition 2.1]{chromsym}
\begin{equation*}
X_G(\bm x)=\sum_{\kappa\text{ proper colouring }}x_{\kappa(1)}x_{\kappa(2)}\cdots x_{\kappa(n)}.
\end{equation*}
We say that $X_G(\bm x)$ is \emph{$e$-positive} if $X_G(\bm x)$ has nonnegative coefficients when expanded in the basis of \emph{elementary symmetric functions} $e_\lambda$, defined by $e_\lambda=e_{\lambda_1}\cdots e_{\lambda_\ell}$, where $e_k=\sum_{i_1<\cdots<i_k}x_{i_1}\cdots x_{i_k}$. We say that $G$ is a \emph{natural unit interval graph (or NUIG)} if there are unit intervals $I_1=[a_1,a_1+1],\ldots,I_n=[a_n,a_n+1]\subset\mathbb R$ with $a_1<\cdots<a_n$ and $\{i,j\}\in E(G)$ if and only if $I_i\cap I_j\neq\emptyset$. Equivalently, for every $1\leq i<j<k\leq n$, if $\{i,k\}\in E(G)$, then $\{i,j\}\in E(G)$ and $\{j,k\}\in E(G)$. These are also equivalent to \emph{proper interval graphs}, where the intervals $I_i$ are allowed to be of different lengths but they cannot contain each other. Hikita proved the famous Stanley--Stembridge conjecture.

\begin{theorem}\cite[Conjecture 5.5]{chromsym}, \cite[Theorem 3]{stanstemproof}
If $G$ is an NUIG, then $X_G(\bm x)$ is $e$-positive.
\end{theorem}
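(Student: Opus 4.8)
The plan is to prove the finer statement about the chromatic quasisymmetric function $X_G(\bm x;q)$ and then specialize, following Hikita's strategy. Since $G$ is already assumed to be an NUIG we do not need the Guay--Paquet reduction; by Shareshian--Wachs, $X_G(\bm x;q)$ is a symmetric function that specializes to $X_G(\bm x)$ at $q=1$. So it suffices to produce an explicit expansion of $X_G(\bm x;q)$ in a basis of symmetric functions indexed by partitions whose $q=1$ specialization is the elementary basis $\{e_\lambda\}$, and to show that each coefficient, evaluated at $q=1$, is a nonnegative real number.

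To that end I would attach to $G$ a random process on standard Young tableaux. Reading the vertices of $G$ in the order $1,2,\ldots,n$ and using that the neighbourhood of each vertex of an NUIG is an interval, one builds up a standard Young tableau $T$ of size $n$ one box at a time: at step $k$ one adjoins a box containing the entry $k$, with the placement governed by transition probabilities depending on $q$ and on which of the previously placed vertices are adjacent to $k$ (equivalently, on the Hessenberg function of $G$). This associates to each $T\in\mathrm{SYT}(\lambda)$, $\lambda\vdash n$, a weight $P_G(T)\in\bQ(q)$, and because $P_G(T)$ is a product of genuine transition probabilities of a Markov chain, $P_G(T)\big|_{q=1}\in[0,1]$; in particular it is nonnegative.

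The crux is then the identity
\begin{equation*}
X_G(\bm x;q)=\sum_{\lambda\vdash n}\Bigl(\sum_{T\in\mathrm{SYT}(\lambda)}q^{\operatorname{stat}(T)}P_G(T)\Bigr)\,\widetilde e_\lambda(\bm x;q),
\end{equation*}
where $\widetilde e_\lambda(\bm x;q)$ is the appropriate $q$-deformation of $e_\lambda$ with $\widetilde e_\lambda(\bm x;1)=e_\lambda(\bm x)$ and $\operatorname{stat}$ is a charge/descent-type statistic on tableaux. I would prove this by induction on $n$: deleting vertex $n$ yields an NUIG $G'=G\setminus n$ on $[n-1]$, and summing over the colour of vertex $n$ relative to its interval of neighbours gives a recursion expressing $X_G(\bm x;q)$ in terms of $X_{G'}(\bm x;q)$. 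The content is to verify that, after expanding both sides in the $\widetilde e_\lambda$ basis, this recursion is matched term by term by the recursion satisfied by the tableau weights when a new maximal box is adjoined — that the effect of adding a vertex to $G$ is exactly mirrored by one step of the SYT process.

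The main obstacle is precisely this last matching: it is the entire technical heart of Hikita's work. The transition probabilities must be defined so delicately that the weighted sums over tableaux reproduce the Jacobi--Trudi-type coefficients of the chromatic quasisymmetric function, and checking compatibility of the two recursions requires careful bookkeeping of the interaction between the $q$-statistics, the interval structure of NUIG neighbourhoods, and the box-adding operations on Young diagrams. Once the identity is in hand the theorem is immediate: set $q=1$, use $\widetilde e_\lambda(\bm x;1)=e_\lambda(\bm x)$, and note that the coefficient of $e_\lambda$ is $\sum_{T\in\mathrm{SYT}(\lambda)}P_G(T)\big|_{q=1}$, a sum of nonnegative reals, so $X_G(\bm x)$ is $e$-positive.
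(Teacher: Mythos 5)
There is a genuine gap. The statement you are asked to prove is Hikita's theorem (the Stanley--Stembridge conjecture), which the paper does not prove at all but simply cites; your proposal is an outline of Hikita's strategy rather than a proof. The entire mathematical content lies in the step you explicitly defer: you never define the transition probabilities $P_G(T)$, the statistic $\operatorname{stat}$, or the deformed basis $\widetilde e_\lambda(\bm x;q)$, and you never verify the central identity expanding $X_G(\bm x;q)$ over tableaux. Saying that the matching of the two recursions ``is the entire technical heart of Hikita's work'' concedes that the proof has not been given; a referee cannot accept ``the probabilities must be defined so delicately that the identity holds'' as an argument, because whether such probabilities exist at all is exactly the open question that Hikita resolved. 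Moreover, the inductive scheme you propose --- delete vertex $n$ from $G$ and sum over the colour of $n$ --- is not known to produce a closed recursion for $X_G(\bm x;q)$ in the $e$-basis: the colour of $n$ interacts with the colours of all its neighbours, so $X_G$ is not a simple linear combination of $X_{G\setminus n}$-type terms, and making any deletion/contraction-style recursion compatible with box-addition on Young diagrams is precisely the nontrivial work you have skipped.

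For comparison with the formula the paper actually invokes: Hikita's expansion (as stated in Section 2 of the paper) is
\begin{equation*}
X_G(\bm x;q)=\sum_{T\in\text{SYT}(G)}c_T(G)\,e_{\text{shape}(T)},
\end{equation*}
where the sum runs over a restricted set $\text{SYT}(G)$ of tableaux built by dropping boxes into ``leftmost non-neighbour columns'' (reading the vertices in \emph{reverse} order, via $\bar G$), the basis is the ordinary $e_\lambda$ rather than a $q$-deformation, and all $q$-dependence sits in the coefficients $c_T(G)$, which are products of factors $c^{(k)}_T(G)$ that specialize at $q=1$ to genuine probabilities. Your sketch has the right shape at $q=1$ (nonnegativity of the coefficients comes from their probabilistic interpretation), but as written it is a plan for reproving Hikita's theorem, not a proof, and the one step that would make it a proof is missing.
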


An \emph{ascent} of a proper colouring $\kappa$ of $G=([n],E)$ is an edge $\{u,v\}\in E$ with $u<v$ and $\kappa(u)<\kappa(v)$. The \emph{chromatic quasisymmetric function} of $G$ is \cite[Definition 1.2]{chromposquasi}
\begin{equation*}
X_G(\bm x;q)=\sum_{\kappa\text{ proper colouring}}q^{\text{asc}(\kappa)}x_{\kappa(1)}x_{\kappa(2)}\cdots x_{\kappa(n)}.
\end{equation*}

Note that if we set $q=1$, we get $X_G(\bm x;1)=X_G(\bm x)$. Shareshian and Wachs showed that if $G$ is an NUIG, then $X_G(\bm x;q)$ is in fact symmetric \cite[Theorem 4.5]{chromposquasi}. They conjectured that in this case, $X_G(\bm x;q)$ is \emph{$e$-positive}, meaning that its $e$-coefficients, which are polynomials in the variable $q$, have positive coefficients. This refined conjecture is still open.

\begin{conjecture}
\cite[Conjecture 5.1]{chromposquasi}
If $G$ is an NUIG, then $X_G(\bm x;q)$ is $e$-positive. 
\end{conjecture}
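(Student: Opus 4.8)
The plan is to upgrade Hikita's combinatorial formula for $X_G(\bm x;q)$ \cite{stanstemproof} so that the nonnegativity it produces at $q=1$ is promoted to nonnegativity of each $e$-coefficient as a polynomial in $q$ (recall that $X_G(\bm x;q)$ is already known to be symmetric for an NUIG, so this is the only thing at issue). Hikita expresses the coefficient we care about as a sum over standard Young tableaux $T$ weighted by probabilities $\mathrm{prob}(T)$; these weights are honest nonnegative numbers, which is exactly why the $q=1$ case goes through, but the $q$-graded pieces of the sum need not individually be nonnegative, so everything hinges on finding the correct way to collect terms. The idea is to exploit the dictionary of Theorem \ref{thm:samemats} between Hikita's probabilities and the first author's forest triples, because the forest triples of \cite{qforesttriples} carry an intrinsic $q$-statistic that is invisible in the purely probabilistic description and is the natural candidate for the refining grading.

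Concretely I would proceed as follows. First, recall Hikita's formula together with the matrix reformulation of Section \ref{section:ftmatrix}, and record that Theorem \ref{thm:samemats} identifies the entries of the gluing matrices with forest-triple counts. Second, introduce a $q$-analogue of these matrices whose $(R,S)$-entry is the generating function of forest triples of the relevant shape by their $q$-statistic, so that all entries lie in $\bN[q]$; since sums and products of matrices over $\bN[q]$ again have entries in $\bN[q]$, the resulting $q$-refined matrix product is manifestly $q$-positive. Third — the crux — prove that this $q$-refined matrix product actually computes $X_G(\bm x;q)$, i.e.\ that the forest-triple $q$-statistic matches the ascent statistic $\operatorname{asc}$ through the matrix bookkeeping, not merely after the specialization $q=1$ used to deduce Corollary \ref{cor:uicycle}. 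Fourth, since a general NUIG need not be a series gluing of cycles and unit interval graphs, extend the matrix machinery to an arbitrary ``layered'' decomposition of the graph (for instance along its maximal cliques) and run the same argument by induction on the number of layers.

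The hard part is the third step: matching the $q$-grading all the way through Hikita's formula. Hikita's probabilities are rational expressions whose numerators only become nonnegative integers after substantial cancellation, and tracking a $q$-weight through that cancellation seems to require either a $q$-compatible sign-reversing involution that annihilates the negative contributions, or a genuinely new combinatorial model for $X_G(\bm x;q)$ that simultaneously refines Hikita's picture and the forest-triple picture. A secondary obstacle is the fourth step: the gluing operations studied here only reach graphs assembled from cycles and unit interval graphs in series, whereas a general NUIG has a more intricate interval structure, so one must check that the $\bN[q]$-matrix formalism is supple enough to decompose \emph{every} NUIG while still reproducing $X_G(\bm x;q)$ exactly rather than only up to $q$-specialization. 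Absent a resolution of the first obstacle, I would not expect this plan to yield the full conjecture, but it does isolate precisely where the $q$-refinement of Hikita's argument breaks down.
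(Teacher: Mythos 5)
There is no proof to compare against: the statement you were asked about is the Shareshian--Wachs conjecture, which the paper explicitly records as still open and only cites as background. Hikita's theorem settles the $q=1$ specialization (Stanley--Stembridge), and the paper's own contributions (Theorem \ref{thm:samemats}, Corollaries \ref{cor:uicycle} and \ref{cor:circulararcgraph}) are partial $e$-positivity results at $q=1$ for special families, not a proof of the $q$-refined conjecture. Your proposal, by your own admission, also does not prove it, so the honest assessment is that it is a research sketch with the essential content missing rather than a proof with a fixable gap.

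Two of your steps fail concretely. In step two you want matrices over $\bN[q]$ whose entries are forest-triple generating functions; but the forest-triple formula is signed, so the entries of $F_G(q)$ are not manifestly in $\bN[q]$ --- and by Theorem \ref{thm:samemats} they equal the tableau-matrix entries built from Hikita's $c_T(G)$, which are rational functions in $q$ that individually need not even be polynomials (see Figure \ref{fig:tabsums}); the paper's Corollary \ref{cor:polys} only recovers polynomiality after partial summation, and positivity of those partial sums would already be a strengthening of the conjecture you are trying to prove, so positing it is circular. In step four, the gluing machinery of Sections \ref{section:ftmatrix}--\ref{section:traceresult} only decomposes graphs at cut vertices (and, via Theorem \ref{thm:trace:trace}, handles one identification of two vertices); a general NUIG, e.g.\ any $2$-connected one such as the graph with edges $\{i,i+1\}$ and $\{i,i+2\}$, admits no such decomposition, so an induction ``along maximal cliques'' is not supported by any result in the paper and would itself require the new combinatorial input that constitutes the open problem. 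In short, your step three is not a ``hard part'' of an otherwise working argument; it is precisely the unresolved content of the conjecture, and the surrounding matrix formalism does not reduce it.
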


\begin{figure}
\begin{tikzpicture}
\draw (1.5,0) node (1) {$G=$};
\filldraw (2.134,0.5) circle (3pt) node[align=center,above] (1){1};
\filldraw (2.134,-0.5) circle (3pt) node[align=center,below] (2){2};
\filldraw (3,0) circle (3pt) node[align=center,above] (3){3};
\filldraw (3.866,0.5) circle (3pt) node[align=center,above] (4){4};
\filldraw (3.866,-0.5) circle (3pt) node[align=center,below] (5){5};
\draw (2.134,0.5)--(2.134,-0.5)--(3,0)--(2.134,0.5) (3.866,0.5)--(3.866,-0.5)--(3,0)--(3.866,0.5);
\end{tikzpicture}
\begin{align*}
X_G(\bm x;q)&=(q^2+2q^3+q^4)e_{32}+(q+3q^2+4q^3+3q^4+q^5)e_{41}\\&\nonumber+(1+3q+4q^2+4q^3+4q^4+3q^5+q^6)e_5\\\nonumber&=q^2[2]_q[2]_qe_{32}+q[3]_q[2]_q[2]_qe_{41}+[5]_q[2]_q[2]_qe_5.
\end{align*}
\caption{\label{fig:chromsymexamplebowtie} The bowtie graph $G$ and the chromatic quasisymmetric function $X_G(\bm x;q)$.}
\end{figure}

\begin{example}
The complete graph $K_n$ is an NUIG and has $X_{K_n}(\bm x;q)=[n]_q!e_n$, where 
\begin{equation*}
[n]_q!=[n]_q[n-1]_q\cdots [2]_q[1]_q\text{ and }[k]_q=1+q+q^2+\cdots+q^{k-1}=\frac{q^k-1}{q-1}.
\end{equation*}
\end{example}

\begin{example}
The bowtie graph $G$ in Figure \ref{fig:chromsymexamplebowtie} is an NUIG and $X_G(\bm x;q)$ is $e$-positive.
\end{example}

\begin{example}
The path $P_n$ is an NUIG and has \cite[Section 5]{chromposquasi}
\begin{equation*}
X_{P_n}(\bm x;q)=\sum_{\alpha\vDash n}[\alpha_1]_q([\alpha_2]_q-1)\cdots([\alpha_\ell]_q-1)e_{\text{sort}(\alpha)},
\end{equation*}
so in particular, $X_{P_n}(\bm x;q)$ is $e$-positive.
\end{example}

More generally, for a directed graph $\vec G=([n],\vec E)$, Ellzey defined the \emph{chromatic quasisymmetric function} of $\vec G$ to be \cite[Definition 1.3]{chromquasidi}
\begin{equation*}
X_{\vec G}(\bm x;q)=\sum_{\kappa\text{ proper colouring}}q^{\text{asc}(\kappa)}x_{\kappa(1)}x_{\kappa(2)}\cdots x_{\kappa(n)},
\end{equation*}
where $\text{asc}(\kappa)$ is the number of directed edges $(u,v)\in \vec E$ with $\kappa(u)<\kappa(v)$. Note that if $G=([n],E)$ is a graph and we direct edges from the smaller vertex to the larger vertex, then we recover the earlier definition of Shareshian and Wachs. We say that $\vec G$ is a \emph{proper circular arc digraph} if there are arcs $A_1,\ldots,A_n$ on some circle with left endpoints $a_1,\ldots,a_n$ in clockwise order, that do not contain each other, such that $(i,j)\in \vec E$ if and only if $a_j\in A_i$. Ellzey showed that $X_{\vec G}(\bm x;q)$ is symmetric for such graphs \cite[Corollary 5.7]{chromquasidi} and conjectured $e$-positivity. This extension of the refined Stanley--Stembridge conjecture is open.

\begin{conjecture}\label{conj:circui}
\cite[Conjecture 1.4]{chromquasidi} If $\vec G$ is a proper circular arc digraph, then $X_{\vec G}(\bm x;q)$ is $e$-positive.
\end{conjecture}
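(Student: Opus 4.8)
This is Ellzey's conjecture and it remains open; what follows is not a complete proof but the plan behind the partial results this paper establishes in support of it. The guiding observation is that many proper circular arc graphs can be produced by \emph{gluing}: gluing a sequence of natural unit interval graphs and cycles at successive vertices, and, in the circular case, additionally identifying the first vertex with the last. Accordingly the strategy has two parts: (i) understand $X_{G+H}(\bm x;q)$ structurally in terms of $G$ and $H$; and (ii) feed in enough positivity about the building blocks so that the structural description forces $e$-positivity of the glued graph.

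For (i), the plan is to attach to each graph $G$ with a marked gluing vertex a matrix $P_G$ whose entries record the proper colourings of $G$, refined by the behaviour of the marked vertex, and to prove (Section~\ref{section:ftmatrix}) that $X_{G+H}(\bm x;q)$ can be read off from the product $P_G P_H$. Identifying the first and last vertex of a single graph $G$ should then correspond to taking a trace: since a proper colouring of the identified graph $G^\circ$ is precisely a proper colouring of $G$ in which the first and last vertices receive the same colour, one expects $X_{G^\circ}(\bm x)=\trace P_G$ (Theorem~\ref{thm:trace:trace}), the diagonal being exactly what isolates those colourings. Specializing to circular arc graphs obtained from a single natural unit interval graph gives Corollary~\ref{cor:circulararcgraph}, and in particular the cycle--chord case, Corollary~\ref{cor:noncrossingcyclechord}.

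For (ii), the crucial and genuinely new step is to show that the matrix $P_G$ attached to a natural unit interval graph has entries that are $e$-positive in a way preserved under matrix multiplication. The plan is to compute $P_G$ in two independent ways: via the forest triples of \cite{qforesttriples}, which are built to produce $e$-expansions, and via Hikita's probabilities on standard Young tableaux from his proof of the Stanley--Stembridge conjecture \cite{stanstemproof}; the heart of the argument is that these two matrices coincide (Theorem~\ref{thm:samemats}). Granting that, $e$-positivity for glued sequences of unit interval graphs and cycles (Corollary~\ref{cor:uicycle}) and for the trace construction (Corollary~\ref{cor:circulararcgraph}) follows because products and traces of entrywise-nonnegative matrices are again entrywise nonnegative. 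The main obstacle to the \emph{full} conjecture is twofold: not every proper circular arc digraph arises from this gluing-and-trace construction with the correct orientation, and the $q$-graded statement would require the forest-triple/Hikita identification to persist at the level of $X_G(\bm x;q)$ rather than only at $q=1$, whereas Hikita's formula is not manifestly positive in $q$. So the realistic target here is the subclass of proper circular arc graphs reachable by gluing, with the general conjecture left open.
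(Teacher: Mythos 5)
You are right that this statement is Ellzey's conjecture, which the paper does not prove and explicitly leaves open; the paper only supplies supporting evidence, and your outline of that evidence (a transfer-matrix description of gluing, a trace formula for identifying the first and last vertices, and the identification of the forest-triple matrix with a matrix built from Hikita's tableau probabilities, Theorem~\ref{thm:samemats}, feeding into Corollaries~\ref{cor:uicycle}, \ref{cor:circulararcgraph}, \ref{cor:noncrossingcyclechord}) is the paper's actual plan.

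Two points in your description of the mechanism do not match the paper and would mislead you if you tried to carry the plan out. First, the matrix is not ``colourings of $G$ refined by the behaviour of the marked vertex'': the entry $(F_G)_{i,j}$ is a signed sum over forest triples of $G+P_j$ whose composition data satisfy $\alpha_1=i$, $r=1$, $\alpha'_\ell\geq j$ (equivalently, over Hikita tableaux of $G+K_j$ with largest entry in column $i$), so the indices $i,j$ are composition/column statistics, not colours. Consequently the trace theorem is not proved by observing that the diagonal isolates colourings in which vertices $1$ and $n$ receive the same colour -- that heuristic has no direct meaning for these indices; instead Theorem~\ref{thm:trace:trace} is proved by passing to subgraph triples and exhibiting an explicit bijection $\ST(G^\circ)\to\bigsqcup_k \ST^{(k)}(G+P_k)$ preserving sign and reduced type. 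Second, the forest-triple/Hikita identification does persist at the $q$-level: the paper proves $F_G(q)=T_G(q)$ for every NUIG, with $q$ generic. What fails away from $q=1$ is not the identification but positivity of the individual Hikita factors $c_T(G)$, which are rational functions in $q$ only known to be nonnegative at $q=1$; this is why the corollaries are stated for $X_G(\bm x)$ rather than $X_G(\bm x;q)$, and why the $q$-version of the trace statement appears only as Conjecture~\ref{conj:qtrace}. With those corrections, your assessment of the remaining obstacles to the full conjecture (not every proper circular arc digraph arises from the gluing-and-trace construction, and the $q$-refinement is unavailable) agrees with the paper.
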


\begin{figure}$$
\begin{tikzpicture}
\filldraw (0,0) circle (3pt) node[align=center,above] (1){1};
\filldraw (0.5,0.866) circle (3pt) node[align=center,above] (2){2};
\filldraw (1.5,0.866) circle (3pt) node[align=center,above] (3){3};
\filldraw (2,0) circle (3pt) node[align=center,above] (4){4};
\filldraw (1.5,-0.866) circle (3pt) node[align=center,below] (5){5};
\filldraw (0.5,-0.866) circle (3pt) node[align=center,below] (6){6};
\draw [-{Stealth[scale=1.5]}] (0,0)--(0.3,0.52);
\draw [-{Stealth[scale=1.5]}] (0.5,0.866)--(1.1,0.866);
\draw [-{Stealth[scale=1.5]}] (1.5,0.866)--(1.8,0.346);
\draw [-{Stealth[scale=1.5]}] (2,0)--(1.7,-0.52);
\draw [-{Stealth[scale=1.5]}] (1.5,-0.866)--(0.9,-0.866);
\draw [-{Stealth[scale=1.5]}] (0.5,-0.866)--(0.2,-0.346);
\draw (0,0)--(0.5,0.866)--(1.5,0.866)--(2,0)--(1.5,-0.866)--(0.5,-0.866)--(0,0);
\end{tikzpicture}$$
\begin{align*}
X_{\vec C_6}(\bm x;q)=& \ 2q^3e_{222}+4q^2[3]_qe_{42}+2q^2[3]_qe_{42}+3q^2[3]_q[2]_qe_{33}+6q[5]_qe_6\\\textcolor{red}{\alpha=}&\ \ \ \ \ \ \
\hspace{-0.065cm}\text{\tiny{\textcolor{red}{222}}} \ \ \ \ \ \ \ \ \ \ \ \ \ \ \hspace{-0.035cm}\text{\tiny{\textcolor{red}{42}}}\hspace{0.1cm} \ \ \ \ \ \ \ \ \ \ \ \ \
\text{\tiny{\textcolor{red}{24}}} \ \ \ \ \ \ \ \ \ \ \ \ \ \ \ \ \ \ \text{\tiny{\textcolor{red}{33}}} \ \ \ \ \ \ \ \ \ \ \ \hspace{0.2cm}
\text{\tiny{\textcolor{red}{6}}} \ \ \ \ \ \ \ \ \ \ \ \
\end{align*}
\caption{\label{fig:cycle} The directed cycle $\vec C_6$ and the chromatic quasisymmetric function $X_{\vec C_6}(\bm x;q)$.}
\end{figure}

\begin{example}
The directed cycle $\vec C_n$ is a proper circular arc digraph and \cite[Corollary 6.2]{chromquasidi}
\begin{equation}\label{eq:xqcn}
X_{\vec C_n}(\bm x;q)=\sum_{\alpha\vDash n}\alpha_1([\alpha_1]_q-1)([\alpha_2]_q-1)\cdots([\alpha_\ell]_q-1)e_{\text{sort}(\alpha)},
\end{equation}
so $X_{\vec C_n}(\bm x;q)$ is $e$-positive. The example of $\vec C_6$ is given in Figure \ref{fig:cycle}, along with the corresponding compositions. It turns out that if we change the directed edge $(n,1)$ to $(1,n)$, the resulting graph $\vec C_n'$ also has that $X_{\vec C_n'}(\bm x;q)$ is symmetric and $e$-positive \cite[Section 3]{smirnov}.
\end{example}

\subsection{Forest triples}

The first author proved a signed combinatorial formula for the $e$-expansion of $X_G(\bm x)$. In the case that $G$ is an NUIG, a similar formula is given for $X_G(\bm x;q)$. Let $G=([n],E)$ be a graph and fix a total ordering $\lessdot$ on $E$.  

\begin{definition}\label{def:background:nbc}
A \emph{no broken circuit (NBC) tree} of $G$ is a subtree $T$ whose edges do not contain a subset of the form $B=C\setminus\{\max(C)\}$, where $C$ is a set of edges that form a cycle and $\max(C)$ is the largest edge of $C$ under $\lessdot$.
\end{definition}

\begin{definition}
A \emph{tree triple of $G$} is an object $\mathcal T=(T,\alpha,r)$ consisting of the following data.
\begin{itemize}
\item $T$ is an NBC tree of $G$.
\item $\alpha$ is a composition of size $|T|$.
\item $r$ is an integer with $1\leq r\leq\alpha_1$, the first part of $\alpha$.
\end{itemize}
A \emph{forest triple of $G$} is a sequence $\mathcal F=(\mathcal T_1=(T_1,\alpha^{(1)},r_1),\ldots,\mathcal T_m=(T_m,\alpha^{(m)},r_m))$ of tree triples such that each vertex of $G$ is in exactly one tree and we have $\min(T_1)<\cdots<\min(T_m)$. Let $\text{FT}(G)$ denote the set of forest triples of $G$. The \emph{type} of $\mathcal F$ is the partition
\begin{equation*}
\text{type}(\mathcal F)=\text{sort}(\alpha^{(1)}\cdots\alpha^{(m)}),
\end{equation*}
given by concatenating the compositions and sorting to make a partition. The \emph{sign} of $\mathcal F$ is the integer
\begin{equation*}
\text{sign}(\mathcal F)=(-1)^{\sum_{i=1}^m(\ell(\alpha^{(i)})-1)}=(-1)^{\ell(\text{type}(\mathcal F))-m}.
\end{equation*}
We will often be interested in $\alpha^{(1)}_1$, the first part of the composition associated to the tree containing vertex $1$. We define the \emph{reduced type} of $\mathcal F$, denoted $\text{type}'(\mathcal F)$, to be the partition $\text{type}(\mathcal F)$ with an instance of $\alpha^{(1)}_1$ removed.
\end{definition}
Now we can use forest triples to calculate the chromatic symmetric function.

\begin{theorem}\cite[Theorem 5.10]{qforesttriples}
The chromatic symmetric function of $G$ satisfies
\begin{equation*}
X_G(\bm x)=\sum_{\mathcal F\in\text{FT}(G)}\text{sign}(\mathcal F)e_{\text{type}(\mathcal F)}.
\end{equation*}
\end{theorem}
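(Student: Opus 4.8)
The plan is to run the classical three-step argument that underlies Stanley's and Whitney's formulas: first express $X_G(\bm x)$ in the power sum basis, then cut that sum down to no-broken-circuit subsets, and finally expand each power sum into elementary symmetric functions in the unique way that reproduces the tree-triple data. For Step 1, recall the power-sum expansion $X_G(\bm x)=\sum_{S\subseteq E}(-1)^{|S|}p_{\lambda(S)}(\bm x)$, obtained by inclusion--exclusion over monochromatic edges, where $\lambda(S)$ is the partition of connected-component sizes of the spanning subgraph $([n],S)$ \cite[Theorem 2.5]{chromsym}. I would then invoke the broken-circuit theorem in its chromatic-symmetric-function form: the subsets $S$ containing a broken circuit cancel in pairs, so $X_G(\bm x)=\sum_{S\text{ NBC}}(-1)^{|S|}p_{\lambda(S)}(\bm x)$. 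The cancellation is a Whitney-type sign-reversing involution: for non-NBC $S$, let $f=f(S)$ be the largest edge completing a broken circuit inside $S$ (so $f=\max_\lessdot(C)$ for some cycle $C$ with $C\setminus\{f\}\subseteq S$), and pair $S$ with $S\triangle\{f\}$; toggling $f$ changes neither the component partition (the endpoints of $f$ are joined by the path $C\setminus\{f\}\subseteq S\setminus\{f\}$) nor the choice of $f$, so the two terms kill one another. An NBC set is acyclic, hence a spanning forest whose components $T_1,\dots,T_m$ are NBC trees; ordering them by $\min(T_1)<\cdots<\min(T_m)$, we have $|S|=n-m$ and $p_{\lambda(S)}=p_{|T_1|}\cdots p_{|T_m|}$, and $S\mapsto(T_1,\dots,T_m)$ is a bijection onto sequences of vertex-disjoint NBC trees covering $[n]$ (a broken circuit is a path, so it lies in a single component, whence a union of NBC trees is NBC). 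Thus $X_G(\bm x)=\sum(-1)^{n-m}p_{|T_1|}\cdots p_{|T_m|}$, summed over all such sequences.

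For Step 2, I would prove the identity
\[
p_k=\sum_{\alpha\vDash k}\ \sum_{r=1}^{\alpha_1}(-1)^{k+\ell(\alpha)}e_\alpha,\qquad e_\alpha:=e_{\alpha_1}\cdots e_{\alpha_{\ell(\alpha)}}.
\]
Taking logarithms in $E(t)=\sum_n e_nt^n=\prod_i(1+x_it)=\exp\bigl(\sum_{k\ge1}(-1)^{k-1}p_kt^k/k\bigr)$ and extracting the coefficient of $t^k$ from $\log\bigl(1+\sum_{j\ge1}e_jt^j\bigr)=\sum_{i\ge1}\tfrac{(-1)^{i-1}}{i}\bigl(\sum_{j\ge1}e_jt^j\bigr)^i$ gives $(-1)^{k-1}p_k/k=\sum_{\alpha\vDash k}\tfrac{(-1)^{\ell(\alpha)-1}}{\ell(\alpha)}e_\alpha$, that is, $p_k=\sum_{\alpha\vDash k}\tfrac{k}{\ell(\alpha)}(-1)^{k+\ell(\alpha)}e_\alpha$. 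Now I replace the weight $k/\ell(\alpha)$ by $\alpha_1=\sum_{r=1}^{\alpha_1}1$: for each partition $\lambda$ the compositions sorting to $\lambda$ form a single $S_{\ell(\lambda)}$-orbit under permutation of parts, and averaging $\alpha_1$ over the orbit gives $\tfrac1{\ell}\sum(\alpha_1+\cdots+\alpha_\ell)=\tfrac k\ell$, so this substitution leaves unchanged the coefficient of every $e_\lambda$. The inserted index $r$, running from $1$ to $\alpha_1$, is exactly the datum $1\le r\le\alpha_1$ of a tree triple; it is there only to turn a rational coefficient into an honest sum of $\pm1$'s.

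For Step 3, I would substitute the Step 2 identity for each factor $p_{|T_i|}$ in the Step 1 sum. This turns $X_G(\bm x)$ into a sum over all tuples $\bigl((T_1,\alpha^{(1)},r_1),\dots,(T_m,\alpha^{(m)},r_m)\bigr)$ with each $T_i$ an NBC tree, $\alpha^{(i)}\vDash|T_i|$, $1\le r_i\le\alpha^{(i)}_1$, and $\min(T_1)<\cdots<\min(T_m)$ --- that is, over the set $\text{FT}(G)$. Such a term $\mathcal F$ contributes $(-1)^{n-m}\prod_i(-1)^{|T_i|+\ell(\alpha^{(i)})}e_{\alpha^{(i)}}$; since $\sum_i|T_i|=n$, the sign collapses to $(-1)^{\sum_i\ell(\alpha^{(i)})-m}=\text{sign}(\mathcal F)$, while $\prod_ie_{\alpha^{(i)}}=e_{\text{sort}(\alpha^{(1)}\cdots\alpha^{(m)})}=e_{\text{type}(\mathcal F)}$. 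Summing over $\mathcal F\in\text{FT}(G)$ yields the claimed formula.

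The one genuinely delicate step is the broken-circuit reduction: one must check that $f=f(S)$ is well defined and is unchanged by the toggle $S\leftrightarrow S\triangle\{f\}$, which rests on the circuit-exchange property of cycles, and that the toggle preserves the component partition. Everything afterward is formal power-series manipulation and parity bookkeeping. It is worth noting that the statement is asserted for an \emph{arbitrary} fixed order $\lessdot$: the set $\text{FT}(G)$ depends on $\lessdot$ through which trees are NBC, but the argument shows the signed sum does not.
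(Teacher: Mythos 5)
This statement is quoted in the paper with a citation to \cite{qforesttriples}; the paper itself contains no proof of it, so there is nothing internal to compare against, and I can only assess your argument on its own terms (it is also the natural route, and very likely the one taken in the cited source). Your proof is correct. Step 1 is Stanley's power-sum expansion together with a Whitney/Blass--Sagan-type cancellation, and your specific involution does work: with $f(S)$ the $\lessdot$-largest edge $f$ admitting a cycle $C$ with $\max(C)=f$ and $C\setminus\{f\}\subseteq S$, removing $f$ clearly preserves the candidate set's maximum, while after adding $f$ a hypothetical larger candidate $f'$ with witness $C'\ni f$ is killed by strong circuit elimination applied to $C,C'$ at $f$, which produces a cycle with maximum $f'$ whose broken circuit already lies in $S$, contradicting maximality; the toggle also preserves $\lambda(S)$ and the property of containing a broken circuit, and NBC sets are exactly disjoint unions of NBC trees because a broken circuit is a path and hence lives in one component. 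Step 2, the identity $p_k=\sum_{\alpha\vDash k}\alpha_1(-1)^{k-\ell(\alpha)}e_\alpha$ (equivalently $\sum_{r=1}^{\alpha_1}$ of signed $e_\alpha$'s), follows from the logarithm of $E(t)$ plus the orbit-averaging observation replacing $k/\ell(\alpha)$ by $\alpha_1$, and I checked the signs in Step 3 collapse precisely to $\text{sign}(\mathcal F)=(-1)^{\sum_i(\ell(\alpha^{(i)})-1)}$ with $\prod_ie_{\alpha^{(i)}}=e_{\text{type}(\mathcal F)}$, the index $r_i$ encoding the weight $\alpha^{(i)}_1$. Your closing remark that the formula holds for an arbitrary fixed order $\lessdot$, even though $\text{FT}(G)$ depends on it, is also consistent with how the present paper uses the result (it fixes the lexicographic order only in the NUIG, $q$-refined setting).
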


In the case that $G$ is an NUIG, we will order edges lexicographically, so that if $\{i,j\}$ and $\{i',j'\}$ are edges of $G$ with $i<j$ and $i'<j'$, we define $\{i,j\}\lessdot\{i',j'\}$ if either $i<i'$, or $i=i'$ and $j<j'$. Then $T$ is an NBC tree of $G$ precisely when no vertex of $T$ has two larger neighbours. We also define a permutation of $V(T)$, denoted $\text{list}(T)$, as follows. We first read the smallest vertex of $T$. Then at each step, we read the smallest unread vertex of $T$ that is adjacent to a read vertex of $T$. Now for a forest triple $\mathcal F=(\mathcal T_i=(T_i,\alpha^{(i)},r_i))$ of $G$, we define the integer 
\begin{equation*}
\text{weight}(\mathcal F)=\text{inv}_G(\text{list}(T_1)\cdots\text{list}(T_m))+\sum_{i=1}^m(r_i-1),
\end{equation*}
where $\text{inv}_G(\sigma)=|\{(i,j): \ i<j, \ \sigma_i>\sigma_j, \{\sigma_j,\sigma_i\}\in E(G)\}|$ is the number of \emph{$G$-inversions} of the permutation $\sigma$. We can similarly use forest triples to calculate the chromatic quasisymmetric function.

\begin{figure}
\begin{tikzpicture}
\draw (3,0) node (1) {$G=$};
\filldraw (3.634,0.5) circle (3pt) node[align=center,above] (1){1};
\filldraw (3.634,-0.5) circle (3pt) node[align=center,below] (2){2};
\filldraw (4.5,0) circle (3pt) node[align=center,above] (3){3};
\filldraw (5.366,0.5) circle (3pt) node[align=center,above] (4){4};
\filldraw (5.366,-0.5) circle (3pt) node[align=center,below] (5){5};
\draw (3.634,0.5)--(3.634,-0.5)--(4.5,0)--(3.634,0.5) (5.366,0.5)--(5.366,-0.5)--(4.5,0)--(5.366,0.5);

\draw (8,2) node [color=teal] (){$\alpha=32$};
\draw (8,1.5) node [color=teal] (){$r=1,2,3$};
\draw (9.5,1.75) node [](){$\text{or}$};
\draw (11,2) node [color=teal] (){$\alpha=23$};
\draw (11,1.5) node [color=teal] (){$r=1,2$};
\draw (9.5,-4.5) node [](){\tiny{$-(\textcolor{orange}{1+2q+q^2})(\textcolor{teal}{1+q+q^2+1+q})$}};

\filldraw [color=teal](7.134,0.5) circle (3pt) node[align=center,above,color=teal] (1){1};
\filldraw [color=teal](7.134,-0.5) circle (3pt) node[align=center,below,color=teal] (2){2};
\filldraw [color=teal](8,0) circle (3pt) node[align=center,above,color=teal] (3){3};
\filldraw [color=teal](8.866,0.5) circle (3pt) node[align=center,above,color=teal] (4){4};
\filldraw [color=teal](8.866,-0.5) circle (3pt) node[align=center,below,color=teal] (5){5};
\draw [color=teal](7.134,0.5)--(7.134,-0.5)--(8,0) (8.866,0.5)--(8.866,-0.5) (8,0)--(8.866,0.5);
\draw (8,-1.25) node (){\small{$\text{inv}_G(\textcolor{teal}{12345})=\textcolor{orange}0$}};

\filldraw [color=teal](7.134,-2) circle (3pt) node[align=center,above,color=teal] (1){1};
\filldraw [color=teal](7.134,-3) circle (3pt) node[align=center,below,color=teal] (2){2};
\filldraw [color=teal](8,-2.5) circle (3pt) node[align=center,above,color=teal] (3){3};
\filldraw [color=teal](8.866,-2) circle (3pt) node[align=center,above,color=teal] (4){4};
\filldraw [color=teal](8.866,-3) circle (3pt) node[align=center,below,color=teal] (5){5};
\draw [color=teal](7.134,-2)--(7.134,-3)--(8,-2.5) (8.866,-2)--(8.866,-3) (8,-2.5)--(8.866,-3);
\draw (8,-3.75) node (){\small{$\text{inv}_G(\textcolor{teal}{12354})=\textcolor{orange}1$}};

\filldraw [color=teal](10.134,0.5) circle (3pt) node[align=center,above,color=teal] (1){1};
\filldraw [color=teal](10.134,-0.5) circle (3pt) node[align=center,below,color=teal] (2){2};
\filldraw [color=teal](11,0) circle (3pt) node[align=center,above,color=teal] (3){3};
\filldraw [color=teal](11.866,0.5) circle (3pt) node[align=center,above,color=teal] (4){4};
\filldraw [color=teal](11.866,-0.5) circle (3pt) node[align=center,below,color=teal] (5){5};
\draw [color=teal](10.134,0.5)--(11,0)--(10.134,-0.5) (11.866,0.5)--(11.866,-0.5) (11,0)--(11.866,0.5);
\draw (11,-1.25) node (){\small{$\text{inv}_G(\textcolor{teal}{13245})=\textcolor{orange}1$}};

\filldraw [color=teal](10.134,-2) circle (3pt) node[align=center,above,color=teal] (1){1};
\filldraw [color=teal](10.134,-3) circle (3pt) node[align=center,below,color=teal] (2){2};
\filldraw [color=teal](11,-2.5) circle (3pt) node[align=center,above,color=teal] (3){3};
\filldraw [color=teal](11.866,-2) circle (3pt) node[align=center,above,color=teal] (4){4};
\filldraw [color=teal](11.866,-3) circle (3pt) node[align=center,below,color=teal] (5){5};
\draw [color=teal] (10.134,-2)--(11,-2.5)--(10.134,-3) (11.866,-2)--(11.866,-3) (11,-2.5)--(11.866,-3);
\draw (11,-3.75) node (){\small{$\text{inv}_G(\textcolor{teal}{13254})=\textcolor{orange}2$}};

\draw (14.5,2) node [color=cyan] (){$\alpha=3$};
\draw (17.5,2) node [color=magenta] (){$\alpha'=2$};
\draw (16,1.75) node [](){$\text{and}$};
\draw (14.5,1.5) node [color=cyan] (){$r=1,2,3$};
\draw (17.5,1.5) node [color=magenta] (){$r'=1,2$};
\draw (16,-4.5) node [](){\tiny{$+(\textcolor{orange}{2+2q})(\textcolor{cyan}{1+q+q^2})(\textcolor{magenta}{1+q})$}};

\filldraw [color=cyan](13.634,0.5) circle (3pt) node[align=center,above,color=cyan] (1){1};
\filldraw [color=cyan](13.634,-0.5) circle (3pt) node[align=center,below,color=cyan] (2){2};
\filldraw [color=cyan](14.5,0) circle (3pt) node[align=center,above,color=cyan] (3){3};
\filldraw [color=magenta](15.366,0.5) circle (3pt) node[align=center,above,color=magenta] (4){4};
\filldraw [color=magenta](15.366,-0.5) circle (3pt) node[align=center,below,color=magenta] (5){5};
\draw [color=cyan](13.634,0.5)--(13.634,-0.5)--(14.5,0); \draw [color=magenta](15.366,0.5)--(15.366,-0.5);
\draw (14.5,-1.25) node (){\small{$\text{inv}_G(\textcolor{cyan}{123}\textcolor{magenta}{45})=\textcolor{orange}0$}};

\filldraw [color=cyan](13.634,-2) circle (3pt) node[align=center,above,color=cyan] (1){1};
\filldraw [color=cyan](13.634,-3) circle (3pt) node[align=center,below,color=cyan] (2){2};
\filldraw [color=cyan](14.5,-2.5) circle (3pt) node[align=center,above,color=cyan] (3){3};
\filldraw [color=magenta](15.366,-2) circle (3pt) node[align=center,above,color=magenta] (4){4};
\filldraw [color=magenta](15.366,-3) circle (3pt) node[align=center,below,color=magenta] (5){5};
\draw [color=cyan](13.634,-2)--(14.5,-2.5)--(13.634,-3); \draw [color=magenta](15.366,-2)--(15.366,-3);
\draw (14.5,-3.75) node (){\small{$\text{inv}_G(\textcolor{cyan}{132}\textcolor{magenta}{45})=\textcolor{orange}1$}};

\filldraw [color=magenta](16.634,0.5) circle (3pt) node[align=center,above,color=magenta] (1){1};
\filldraw [color=magenta](16.634,-0.5) circle (3pt) node[align=center,below,color=magenta] (2){2};
\filldraw [color=cyan](17.5,0) circle (3pt) node[align=center,above,color=cyan] (3){3};
\filldraw [color=cyan](18.366,0.5) circle (3pt) node[align=center,above,color=cyan] (4){4};
\filldraw [color=cyan](18.366,-0.5) circle (3pt) node[align=center,below,color=cyan] (5){5};
\draw [color=magenta](16.634,-0.5)--(16.634,0.5); 
\draw [color=cyan](17.5,0)--(18.366,0.5)--(18.366,-0.5);
\draw (17.5,-1.25) node (){\small{$\text{inv}_G(\textcolor{magenta}{12}\textcolor{cyan}{345})=\textcolor{orange}0$}};

\filldraw [color=magenta](16.634,-2) circle (3pt) node[align=center,above,color=magenta] (1){1};
\filldraw [color=magenta](16.634,-3) circle (3pt) node[align=center,below,color=magenta] (2){2};
\filldraw [color=cyan](17.5,-2.5) circle (3pt) node[align=center,above,color=cyan] (3){3};
\filldraw [color=cyan](18.366,-2) circle (3pt) node[align=center,above,color=cyan] (4){4};
\filldraw [color=cyan](18.366,-3) circle (3pt) node[align=center,below,color=cyan] (5){5};
\draw [color=magenta](16.634,-3)--(16.634,-2); 
\draw [color=cyan](17.5,-2.5)--(18.366,-3)--(18.366,-2);
\draw (17.5,-3.75) node (){\small{$\text{inv}_G(\textcolor{magenta}{12}\textcolor{cyan}{354})=\textcolor{orange}1$}};
\end{tikzpicture}
\caption{\label{fig:ftexample} The bowtie graph $G$ and the forest triples of $G$ of type $32$.}
\end{figure}
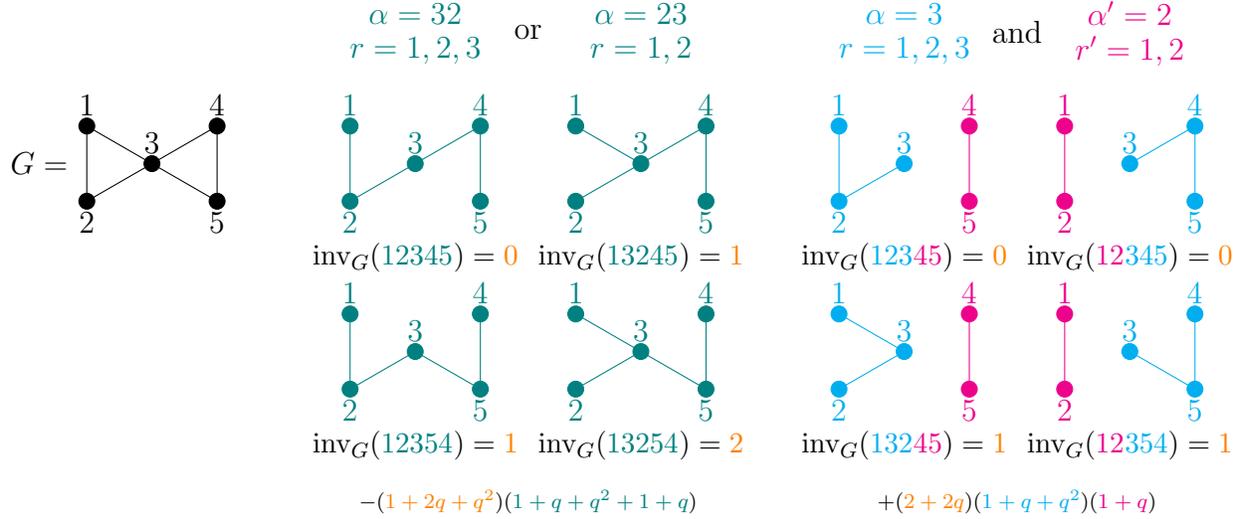

\begin{theorem}\cite[Theorem 3.4]{qforesttriples} The chromatic quasisymmetric function of an NUIG $G$ satisfies
\begin{equation*}
X_G(\bm x;q)=\sum_{\mathcal F\in\text{FT}(G)}\text{sign}(\mathcal F)q^{\text{weight}(\mathcal F)}e_{\text{type}(\mathcal F)}.
\end{equation*}
\end{theorem}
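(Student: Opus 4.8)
The plan is to obtain this refinement by carrying a $q$-statistic through the power-sum argument that yields the $q=1$ case \cite[Theorem~5.10]{qforesttriples}. Recall that the unrefined formula comes from Stanley's expansion together with a broken-circuit reduction. Expanding $\prod_{\{u,v\}\in E}\bigl(1-[\kappa(u)=\kappa(v)]\bigr)$ in the definition of $X_G(\bm x;q)$ gives $X_G(\bm x;q)=\sum_{S\subseteq E}(-1)^{|S|}Q_S(\bm x;q)$, where in $Q_S(\bm x;q)=\sum_\kappa q^{\operatorname{asc}(\kappa)}x_{\kappa(1)}\cdots x_{\kappa(n)}$ the sum runs over colourings $\kappa$ that are constant on each connected component of $([n],S)$; at $q=1$ this inner sum is $p_{\lambda(S)}$, where $\lambda(S)$ is the partition of component sizes. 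The first step is to check that the classical broken-circuit involution still applies: if $S$ contains a broken circuit, toggle the largest edge $e_0$ of the associated cycle $C$. Since $C\setminus\{e_0\}\subseteq S$ already joins the endpoints of $e_0$, this toggle changes neither the partition of $[n]$ into components of $([n],S)$, nor the set of colourings constant on those components, nor their ascent counts; hence $Q_S$ is unchanged, so the involution is weight-preserving and sign-reversing. This reduces the formula to $X_G(\bm x;q)=\sum_F(-1)^{|E(F)|}Q_F(\bm x;q)$, summed over NBC forests $F$.

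The heart of the proof is to show that, for an NUIG with the lexicographic edge order, this signed sum equals $\sum_F q^{\operatorname{inv}_G(\operatorname{list}(T_1)\cdots\operatorname{list}(T_m))}\prod_{i=1}^{m}\tilde p_{|T_i|}(q)$, where $T_1,\dots,T_m$ are the components of $F$ and $\tilde p_k(q):=\sum_{\alpha\vDash k}[\alpha_1]_q(-1)^{\ell(\alpha)-1}e_\alpha$. Its $q=1$ specialization is the Newton-type identity $(-1)^{k-1}p_k=\sum_{\alpha\vDash k}\alpha_1(-1)^{\ell(\alpha)-1}e_\alpha$, which follows from $\sum_{k\geq1}(-1)^{k-1}p_kt^k=tE'(t)/E(t)$ with $E(t)=\sum_j e_jt^j$; the integer $r$ enters only as bookkeeping, since the count $\#\{r:1\le r\le\alpha_1\}=\alpha_1$ becomes $\sum_{r=1}^{\alpha_1}q^{r-1}=[\alpha_1]_q$ after refinement. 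To prove the refined claim I would show that, for a colouring constant on the components of an NBC forest, the ascent count decomposes into an inter-component part equal to $\operatorname{inv}_G(\operatorname{list}(T_1)\cdots\operatorname{list}(T_m))$ — which is constant over all such colourings — plus an intra-component part whose $e$-expansion assembles into $\prod_i\tilde p_{|T_i|}(q)$. This step is where the NUIG hypothesis and the lexicographic order are essential, and where $\operatorname{list}$ is exactly the right reading order. I expect the main obstacle to be that this identification is not term-by-term in $F$: already for $G=K_2$ a term $(q-1)e_2$ is transferred between the two NBC forests. Handling this will require either an additional sign-reversing involution among NBC forests or a careful generating-function argument, and the delicate point is the precise match between $\operatorname{list}$, $\operatorname{inv}_G$, and the internal ascent structure of unit interval graphs.

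Granting that reduction, the theorem follows by unwinding definitions. Expand $\tilde p_{|T_i|}(q)=\sum_{\alpha^{(i)}\vDash|T_i|}\bigl(\sum_{r_i=1}^{\alpha^{(i)}_1}q^{r_i-1}\bigr)(-1)^{\ell(\alpha^{(i)})-1}e_{\alpha^{(i)}}$ inside the product; the resulting data $(F,(\alpha^{(i)}),(r_i))$, with the components $T_i$ listed in increasing order of smallest vertex, is precisely a forest triple $\mathcal F\in\operatorname{FT}(G)$. The accumulated sign is $(-1)^{\sum_i(\ell(\alpha^{(i)})-1)}=\operatorname{sign}(\mathcal F)$, the product of elementary symmetric functions is $e_{\operatorname{sort}(\alpha^{(1)}\cdots\alpha^{(m)})}=e_{\operatorname{type}(\mathcal F)}$, and the exponent of $q$ is $\operatorname{inv}_G(\operatorname{list}(T_1)\cdots\operatorname{list}(T_m))+\sum_i(r_i-1)=\operatorname{weight}(\mathcal F)$. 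Summing over all $\mathcal F$ gives the claimed identity, and setting $q=1$ recovers \cite[Theorem~5.10]{qforesttriples}.
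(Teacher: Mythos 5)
This statement is imported from \cite[Theorem 3.4]{qforesttriples} and is not proved anywhere in the present paper, so there is no in-text argument to compare against; judging your proposal on its own terms, it contains a genuine gap at its core. Your step 1 is fine: $Q_S(\bm x;q)=\sum_{\kappa\text{ const.\ on comps of }S}q^{\text{asc}(\kappa)}x_\kappa$ depends only on the partition of $[n]$ into components of $([n],S)$, so the classical broken-circuit toggle cancels non-NBC subsets verbatim in the $q$-setting. Your step 3 (unwinding $\tilde p_k(q)=\sum_{\alpha\vDash k}[\alpha_1]_q(-1)^{\ell(\alpha)-1}e_\alpha$ into the data $(T_i,\alpha^{(i)},r_i)$ and matching sign, type and weight) is also correct. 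But the entire content of the theorem sits in your middle step, the identity
\begin{equation*}
\sum_{F\ \text{NBC}}(-1)^{|E(F)|}Q_F(\bm x;q)\;=\;\sum_{F\ \text{NBC}}q^{\text{inv}_G(\text{list}(T_1)\cdots\text{list}(T_m))}\prod_{i}\tilde p_{|T_i|}(q),
\end{equation*}
and you do not prove it; you explicitly defer it to ``an additional sign-reversing involution or a careful generating-function argument.'' This is precisely where the NUIG hypothesis, the lexicographic edge order, and the definitions of $\text{list}$ and $\text{inv}_G$ must do real work, so the proposal omits the essential part of the proof rather than a routine verification.

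Moreover, the mechanism you sketch for that step is internally inconsistent. For a colouring constant on the components of $F$, every edge inside a component has equal endpoint colours, so there are \emph{no} intra-component ascents, and the inter-component ascents are \emph{not} constant over such colourings (for $G=K_2$ with $F$ the edgeless forest, the single edge is an ascent or not depending on the colours chosen); hence the claimed split ``constant inter-component part $=\text{inv}_G(\text{list}(T_1)\cdots\text{list}(T_m))$ plus an intra-component part assembling into $\prod_i\tilde p_{|T_i|}(q)$'' cannot be literally correct. Your own $K_2$ computation shows this: $(-1)^{|E(F)|}Q_F$ and $q^{\text{inv}_G}\prod_i\tilde p_{|T_i|}(q)$ differ by $\pm(q-1)e_2$ on each of the two forests, and only the total sum agrees. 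So the identification genuinely requires exchanging terms between different NBC forests, i.e.\ a nontrivial cancellation argument that your outline neither supplies nor reduces to a known statement; until that is done, the $q$-refinement (as opposed to the $q=1$ specialization, which your Newton-identity argument does recover) remains unproven.
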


\begin{example}
Figure \ref{fig:ftexample} shows the forest triples of type $32$ for the bowtie graph $G$ from Figure \ref{fig:chromsymexamplebowtie}. We could have a single tree triple $\mathcal T=(T,\alpha,r)$, where either $\alpha=32$ and $1\leq r\leq 3$, or $\alpha=23$ and $1\leq r\leq 2$. Or we could have two tree triples of the form $\mathcal T=(T,3,r)$ and $\mathcal T'=(T',2,r')$, where $1\leq r\leq 3$ and $1\leq r'\leq 2$. By calculating the weights, we see that the coefficient of $e_{32}$ in $X_G(\bm x;q)$ is
\begin{equation*}
-(1+2q+q^2)(1+q+q^2+1+q)+(2+2q)(1+q+q^2)(1+q)=q^2(1+q)^2=q^2[2]_q[2]_q.
\end{equation*}
\end{example}
\subsection{Standard Young tableaux}

Hikita proved a formula for the $e$-expansion of $X_G(\bm x;q)$ in terms of probabilities associated to standard Young tableaux.

\begin{definition}
The \emph{Young diagram} of a partition $\lambda$ is the left-justified array of boxes with $\lambda_i$ boxes in the $i$-th row (from the bottom). A \emph{standard Young tableau (SYT) of shape $\lambda$} is a filling $T$ of these boxes with the integers $1$ through $n$, using each exactly once, so that rows and columns are increasing. We let $\text{SYT}_\lambda$ denote the set of SYT of shape $\lambda$ and we let $\text{SYT}_n=\bigsqcup_{\lambda\vdash n}\text{SYT}_\lambda$. If $T\in\text{SYT}_n$, we write $\text{shape}(T)=\lambda$.
\end{definition}

We can imagine constructing $T$ by successively dropping boxes labelled $1,\ldots,n$ on the top of some column. If box $k$ is dropped in column $b_k$, then the condition that $T$ has increasing rows is precisely that the sequence $b(T)=(b_1,\ldots,b_n)$ is a \emph{ballot sequence}, meaning that every initial string $(b_1,\ldots,b_j)$ contains at least as many $i$'s as $(i+1)$'s for every $i$. \\

We will often be interested in $b_n(T)$, the column of $T$ that contains the largest entry $n$, or in other words, the length of the row of $T$ that contains $n$. We define the \emph{reduced shape} of $T$, denoted $\text{shape}'(T)$, to be the partition $\text{shape}(T)$ with an instance of $b_n(T)$ removed.\\

Now given an NUIG $G$, Hikita described a certain subset of standard Young tableaux $\text{SYT}(G)\subseteq\text{SYT}_n$. In his formulation, he reads the vertices of $G$ in reverse order, from $n$ to $1$, so we denote by $\bar G$ the reverse graph
\begin{equation*}
\bar G=([n],\{\{n+1-j,n+1-i\}: \ \{i,j\}\in E(G)\}).
\end{equation*}
In other words, $i$ is a $\bar G$-neighbour of $j$ if and only if $(n+1-i)$ is a $G$-neighbour of $(n+1-j)$.

\begin{definition}
Let $G$ be an NUIG, let $T\in\text{SYT}_n$, and let $1\leq k\leq n$. Let $T\vert_{<k}$ the tableau obtained by restricting $T$ to the entries less than $k$ and let $\text{cols}_k$ be the number of columns of $T\vert_{<k}$. We now define the sequence $\delta^{(k)}(T)=(\delta^{(k)}_0,\delta^{(k)}_1,\ldots,\delta^{(k)}_{\text{cols}_k},\delta^{(k)}_{\text{cols}_k+1})$ by $\delta^{(k)}_0=1$, $\delta^{(k)}_{\text{cols}_k+1}=0$, and
\begin{equation*}
\delta^{(k)}_i=\begin{cases}1&\text{ if the top entry of column }i\text{ of }T\vert_{<k}\text{ is a }\bar G\text{-neighbour of }k,\\0&\text{ otherwise,}
\end{cases}
\end{equation*}
for $1\leq i\leq\text{cols}_k$. We define the sets of column indices
\begin{align*}
R_k(T)&=\{1\leq i\leq\text{cols}_k: \ \delta^{(k)}_i=1, \ \delta^{(k)}_{i-1}=0\}\text{ and }\\W_k(T)&=\{1\leq i\leq\text{cols}_k+1: \ \delta^{(k)}_i=0, \ \delta^{(k)}_{i-1}=1\}.
\end{align*}
Finally, we define $\text{SYT}(G)$ to be the subset of those $T\in\text{SYT}_n$ such that for every $1\leq k\leq n$, we have $b_k(T)\in W_k(T)$, where $b_k(T)$ denotes the column that contains the entry $k$. 
\end{definition}

Informally, a standard Young tableau $T\in\text{SYT}(G)$ is constructed as follows. We start with the empty tableau. Then for $1\leq k\leq n$, after we have dropped $(k-1)$ boxes, some columns of our partial tableau $T\vert_{<k}$ will have a $\bar G$-neighbour of $k$ as the top cell and some will not. We have $\delta^{(k)}_i=1$ if $i$ is a ``neighbour column'' and $\delta^{(k)}_i=0$ if $i$ is a ``non-neighbour column''. Then $R_k(T)$ is the set of the ``leftmost neighbour columns'' in a string and $W_k(T)$ is the set of the ``leftmost non-neighbour columns'' in a string. The condition that $b_k(T)\in W_k(T)$ is that the box $k$ must be dropped in a ``leftmost non-neighbour column''.\\

Note that if $i\in W_k(T)$, this means that the top entry of column $(i-1)$ of $T\vert_{<k}$ is a $\bar G$-neighbour of $k$ and the top entry of column $i$ is a $\bar G$-non-neighbour of $k$. Therefore, column $(i-1)$ received a box more recently than column $i$, so by induction, the $(i-1)$-th column was strictly taller than the $i$-th column and we can indeed drop box $k$ in column $i$. \\

We now define factors that we will associate to a standard Young tableau $T\in\text{SYT}(G)$. Hikita's original formula was slightly different but Guay-Paquet showed that this expression is equivalent.

\begin{definition} Let $G$ be an NUIG and let $T\in\text{SYT}(G)$. We define the rational function
\begin{equation*}
c_T(G)=\prod_{k=1}^nc^{(k)}_T(G),\text{ where }
c^{(k)}_T(G)=q^{|\{j>b_k: \ \delta_j=1\}|}[b_k]_q\frac{\prod_{i\in R_k(T)}[|i-b_k|]_q}{\prod_{i\in W_k(T): \ i\neq b_k}[|i-b_k|]_q}.
\end{equation*}
\end{definition}

Now Hikita proved the following formula for $X_G(\bm x;q)$.
\begin{theorem}\cite[Theorem 3]{stanstemproof} If $G$ is an NUIG, then we have
\begin{equation*}
X_G(\bm x;q)=\sum_{T\in\text{SYT}(G)}c_T(G)e_{\text{shape}(T)}.
\end{equation*}
In particular, if we evaluate at $q=1$, we have that $X_G(\bm x)$ is $e$-positive.
\end{theorem}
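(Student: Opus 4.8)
The plan is to recognise this as Hikita's theorem \cite{stanstemproof}, whose proof is substantial, and to follow the shape of that argument: prove the identity $X_G(\bm x;q)=\sum_{T\in\text{SYT}(G)}c_T(G)e_{\text{shape}(T)}$ by induction on $n=|V(G)|$, by matching a recursion for the left-hand side against one for the right-hand side. The base case $n=1$ is trivial, since both sides equal $e_1$. For the inductive step I would peel off the vertex of $G$ processed \emph{last} in the box-dropping construction of $\text{SYT}(G)$ — namely the one whose tableau label is $n$, which corresponds to vertex $1$ of the NUIG, whose $G$-neighbours form an initial block $\{2,\dots,r\}$ for some $r$.

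First I would record the appropriate recursion for $X_G(\bm x;q)$ itself (for instance, colouring vertex $1$ and grouping the proper colourings of $G$ by which higher-numbered vertices share the colour of vertex $1$), expressing $X_G(\bm x;q)$ as an explicit $q$-weighted combination of chromatic quasisymmetric functions of NUIGs on fewer vertices — induced subgraphs of $G$ on $\{2,\dots,n\}$ with some of the low vertices also removed — in which a new part appears whose size is one more than the number of deleted vertices. The aim is to identify this new part, on the tableau side, with the row of $T$ containing the entry $n$, whose length is $b_n(T)$.

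Next I would analyse the right-hand side by deleting the box $n$ from $T\in\text{SYT}(G)$, leaving the partial tableau $T\vert_{<n}$. Since $b_n(T)\in W_n(T)$ and $c_T(G)=\prod_k c^{(k)}_T(G)$ is a product, one obtains $c_T(G)=c^{(n)}_T(G)\cdot c_{T\vert_{<n}}(G')$ for a suitable smaller NUIG $G'$, together with $e_{\text{shape}(T)}=e_{\text{shape}(T\vert_{<n})\cup\{b_n(T)\}}$. The heart of the matter is to show that, as $b=b_n(T)$ ranges over $W_n$, the factors $c^{(n)}_T(G)$ reassemble into exactly the $q$-coefficients of the left-hand recursion, and that the smaller configurations produced are precisely the sets $\text{SYT}(G'')$ for the smaller NUIGs $G''$ arising there — in particular one must check that the data $(R_k,W_k)$ of $T$ for $k<n$ is unaffected by where box $n$ is placed, and that the shapes line up. At $q=1$ this is the assertion that certain transition weights are probabilities summing to $1$, and at general $q$ it is a $q$-deformation of that, which I expect to reduce to a partial-fraction identity among the rational functions $[b]_q\prod_{i\in R_n}[|i-b|]_q/\prod_{i\in W_n\setminus\{b\}}[|i-b|]_q$. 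This $q$-identity, together with the bookkeeping that the correct smaller graphs and shapes appear, is the step I expect to be the main obstacle. (Within the present paper one could instead try to derive the formula by combining the forest-triple formula above with Theorem \ref{thm:samemats}, exhibiting a sign-reversing cancellation on $\text{FT}(G)$ whose surviving contributions are indexed by $\text{SYT}(G)$ with weight $c_T$; the essential difficulty would be the same.)

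Finally, the $q=1$ consequence is immediate once the formula is established: since $b_k(T)\in W_k(T)$ and $W_k(T)\cap R_k(T)=\emptyset$, every factor $|i-b_k|$ occurring in $c^{(k)}_T(G)$ is a positive integer, so the denominators do not vanish at $q=1$ and $c^{(k)}_T(G)|_{q=1}=b_k\prod_{i\in R_k(T)}|i-b_k|/\prod_{i\in W_k(T),\,i\neq b_k}|i-b_k|$ is a nonnegative rational; hence each $c_T(G)|_{q=1}\geq 0$, and specialising $q=1$ in the formula displays $X_G(\bm x)$ as a nonnegative combination of the $e_\lambda$.
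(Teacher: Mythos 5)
This statement is not proved in the paper at all: it is Hikita's theorem, quoted with a citation to \cite{stanstemproof}, and the paper treats it as a black box. Your proposal correctly identifies this, but what you have written is an outline rather than a proof. The entire mathematical content of Hikita's argument is concentrated in exactly the step you defer: showing that, when the box $n$ is stripped off, the factors $c^{(n)}_T(G)$ for $b\in W_n(T)$ recombine into the coefficients of a recursion satisfied by $X_G(\bm x;q)$, and that the residual tableaux are precisely $\text{SYT}(G'')$ for the correct smaller NUIGs. You flag this as ``the main obstacle'' and ``expect'' it to reduce to a partial-fraction identity, but you neither state the recursion for $X_G(\bm x;q)$ precisely nor prove the $q$-identity; in particular, the recursion you gesture at (grouping proper colourings by which higher vertices share the colour of vertex $1$) is not obviously compatible with the ascent statistic, and the known recursions that make such an induction go through are considerably more delicate than colour-class grouping. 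As it stands, the inductive step is asserted, not established, so there is a genuine gap.

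A second caution: the in-paper shortcut you mention --- deriving the formula from the forest-triple expansion together with Theorem \ref{thm:samemats} --- is circular. The proof of Proposition \ref{prop:xtabmatrix}, and hence of Theorem \ref{thm:samemats}, already uses Hikita's formula to write $X_G(\bm x;q)$ as a sum over $\text{SYT}(G+K_1)$, so that route assumes the statement you are trying to prove. The only parts of your write-up that are complete and correct are the base case and the final observation that $c_T(G)\vert_{q=1}\geq 0$, which gives $e$-positivity at $q=1$ once the formula is granted; for the formula itself, the appropriate course here is to cite \cite{stanstemproof}, as the paper does, or to reproduce Hikita's full argument, which your sketch does not do.
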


\begin{remark}
We can think of the factor \begin{equation*}p(b_k,q)=\frac{\prod_{i\in R_k(T)}[|i-b_k|]_q}{\prod_{i\in W_k(T): \ i\neq b_k}[|i-b_k|]_q}\end{equation*} as the \emph{probability} that box $k$ is dropped in column $b_k$ because for every $q\in\mathbb R_{\geq 0}$, we have $0\leq p(b_k,q)\leq 1$ and $\sum_{b_k\in W_k(T)}p(b_k,q)=1$.  
\end{remark}

\begin{proposition}\label{prop:tabfirstrow}
Let $G$ be an NUIG with $\{n-k+1,n\}\in E(G)$, in other words, the vertices $\{1,\ldots,k\}$ are all adjacent in $\bar G$. Then every tableau $T\in\text{SYT}(G)$ must have all of the entries $\{1,\ldots,k\}$ in the first row. 
\end{proposition}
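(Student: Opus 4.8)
The plan is to prove by induction on $m$ that for every $1 \le m \le k+1$, the partial tableau $T\vert_{<m}$ consists of a single row, with entry $i$ in column $i$ for $1 \le i \le m-1$. Taking $m = k+1$ says precisely that $T$ restricted to $\{1,\ldots,k\}$ is the single row $1\,2\,\cdots\,k$; since later boxes are always dropped on top of columns, the entries $1,\ldots,k$ remain in the bottom (first) row of the final tableau $T$, which is the claim. One first translates the hypothesis: by the interval-like edge condition for NUIGs, $\{n-k+1,n\}\in E(G)$ forces every pair in $\{n-k+1,\ldots,n\}$ to be an edge of $G$, which is exactly the statement (already noted in the proposition) that $\{1,\ldots,k\}$ is a clique in $\bar G$. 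In particular, whenever $m\le k$, every element of $\{1,\ldots,m-1\}$ is a $\bar G$-neighbour of $m$.

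The base case $m=1$ is trivial, since $T\vert_{<1}$ is empty. For the inductive step, suppose $T\vert_{<m}$ is the single row $1\,2\,\cdots\,(m-1)$, so that $\text{cols}_m = m-1$ and the top (and only) entry of column $i$ is $i$ for each $1 \le i \le m-1$. Since each such $i$ is a $\bar G$-neighbour of $m$, we get $\delta^{(m)}_i = 1$ for $1 \le i \le m-1$; combined with $\delta^{(m)}_0 = 1$ and $\delta^{(m)}_{\text{cols}_m+1} = \delta^{(m)}_m = 0$, this yields $\delta^{(m)} = (1,1,\ldots,1,0)$, with $m$ ones followed by a single zero. Reading off the indices, $W_m(T) = \{\,1\le i\le m : \delta^{(m)}_i = 0,\ \delta^{(m)}_{i-1} = 1\,\} = \{m\}$. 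Because $T\in\text{SYT}(G)$ requires $b_m(T)\in W_m(T)$, we conclude $b_m(T) = m$, so box $m$ is dropped on the (empty) column $m$, placing it in row $1$ and extending the row to $1\,2\,\cdots\,m$; this is the statement of the induction for $m+1$. When $m=1$ the same computation degenerates correctly: $\text{cols}_1 = 0$, $\delta^{(1)} = (1,0)$, $W_1(T) = \{1\}$, forcing $b_1(T) = 1$.

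I do not anticipate a real obstacle: the proposition reduces to tracking the $\delta$-sequence of a one-row partial tableau, which is forced to be $(1,\ldots,1,0)$ by the clique assumption. The only points needing care are the degenerate $m=1$ case and the index bookkeeping at $0$ and $\text{cols}_m+1$ in the definitions of $\delta^{(m)}$ and $W_m(T)$; the droppability of box $m$ in column $m$ is automatic and is the content of the remark following the definition of $\text{SYT}(G)$.
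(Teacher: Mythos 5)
Your proof is correct and follows essentially the same route as the paper's: the clique condition forces $\delta^{(m)}(T)=(1,\ldots,1,0)$ for each $m\leq k$, so the only admissible column is the first empty one and each entry $m$ lands in the first row. Your version merely makes the induction and the computation of $W_m(T)=\{m\}$ explicit, which the paper compresses into a single sentence.
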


\begin{proof}
Because the vertices $\{1,\ldots,k\}$ are all adjacent in $G$, we have for every $1\leq k'\leq k$ that $\delta^{(k')}(T)=(1,\ldots,1,0)$, so the next entry $k'$ must also be placed in the first row. 
\end{proof}

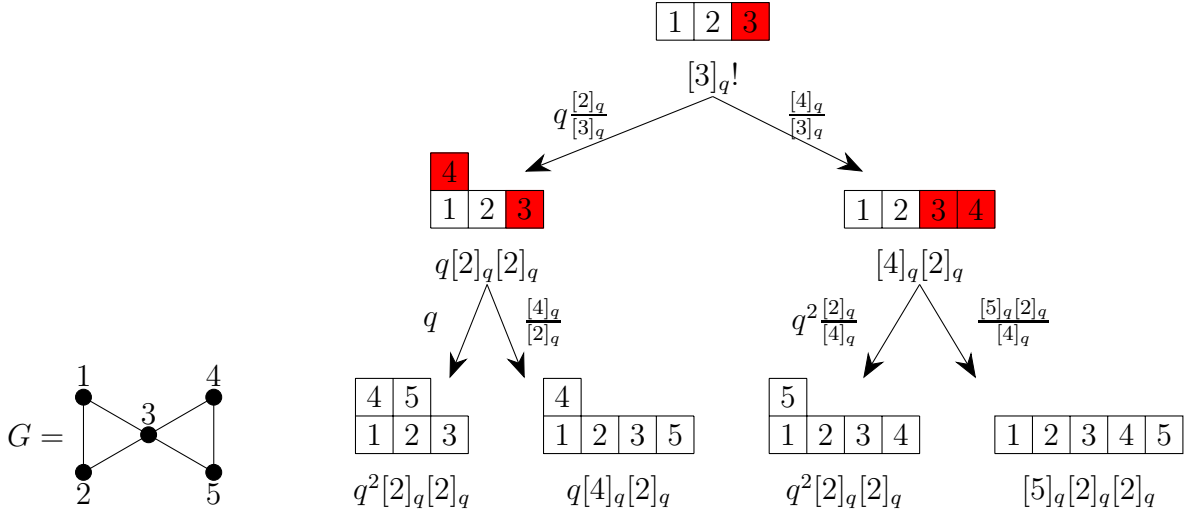
\begin{figure}
$$
\begin{tikzpicture}
\filldraw (11.75,5.25)--(12.25,5.25)--(12.25,5.75)--(11.75,5.75)--(11.75,5.25) [red];
\draw (11,5.5) node (){1};
\draw (11.5,5.5) node (){2};
\draw (12,5.5) node (){3};
\draw (11.5,4.75) node (){$[3]_q!$};
\draw (10.75,5.25)--(12.25,5.25) (10.75,5.75)--(12.25,5.75) (10.75,5.25)--(10.75,5.75) (11.25,5.25)--(11.25,5.75) (11.75,5.25)--(11.75,5.75) (12.25,5.25)--(12.25,5.75);

\draw [arrows = {-Stealth[scale=2]}] (11.5,4.5)--(9,3.5);
\draw (9.75,4.25) node () {$\tiny{q\frac{[2]_q}{[3]_q}}$};
\draw [arrows = {-Stealth[scale=2]}] (11.5,4.5)--(13.5,3.5);
\draw (12.75,4.25) node (){$\tiny{\frac{[4]_q}{[3]_q}}$};

\filldraw (7.75,3.25)--(8.25,3.25)--(8.25,3.75)--(7.75,3.75)--(7.75,3.25) [red];
\filldraw (8.75,2.75)--(9.25,2.75)--(9.25,3.25)--(8.75,3.25)--(8.75,2.75) [red];
\draw (8,3.5) node (){4};
\draw (8,3) node (){1};
\draw (8.5,3) node (){2};
\draw (9,3) node (){3};
\draw (8.5,2.25) node (){$q[2]_q[2]_q$};
\draw (7.75,2.75)--(9.25,2.75) (7.75,3.25)--(9.25,3.25) (7.75,3.75)--(8.25,3.75) (7.75,2.75)--(7.75,3.75) (8.25,2.75)--(8.25,3.75) (8.75,2.75)--(8.75,3.25) (9.25,2.75)--(9.25,3.25);

\draw [arrows = {-Stealth[scale=2]}] (8.5,2)--(8,0.75);
\draw (7.75,1.5) node (){$\tiny{q}$};
\draw [arrows = {-Stealth[scale=2]}] (8.5,2)--(9,0.75);
\draw (9.25,1.5) node (){$\tiny{\frac{[4]_q}{[2]_q}}$};

\filldraw (14.25,2.75)--(14.75,2.75)--(14.75,3.25)--(14.25,3.25)--(14.25,2.75) [red];
\filldraw (14.75,2.75)--(15.25,2.75)--(15.25,3.25)--(14.75,3.25)--(14.75,2.75) [red];
\draw (13.5,3) node (){1};
\draw (14,3) node (){2};
\draw (14.5,3) node (){3};
\draw (15,3) node (){4};
\draw (14.25,2.25) node (){$[4]_q[2]_q$};
\draw (13.25,2.75)--(15.25,2.75) (13.25,3.25)--(15.25,3.25) (13.25,2.75)--(13.25,3.25) (13.75,2.75)--(13.75,3.25) (14.25,2.75)--(14.25,3.25) (14.75,2.75)--(14.75,3.25) (15.25,2.75)--(15.25,3.25);

\draw [arrows = {-Stealth[scale=2]}] (14.25,2)--(13.5,0.75);
\draw (13,1.5) node (){$\tiny{q^2\frac{[2]_q}{[4]_q}}$};
\draw [arrows = {-Stealth[scale=2]}] (14.25,2)--(15,0.75);
\draw (15.5,1.5) node (){$\tiny{\frac{[5]_q[2]_q}{[4]_q}}$};

\draw (2.5,0) node (1) {$G=$};
\filldraw (3.134,0.5) circle (3pt) node[align=center,above] (1){1};
\filldraw (3.134,-0.5) circle (3pt) node[align=center,below] (2){2};
\filldraw (4,0) circle (3pt) node[align=center,above] (3){3};
\filldraw (4.866,0.5) circle (3pt) node[align=center,above] (4){4};
\filldraw (4.866,-0.5) circle (3pt) node[align=center,below] (5){5};
\draw (3.134,0.5)--(3.134,-0.5)--(4,0)--(3.134,0.5) (4.866,0.5)--(4.866,-0.5)--(4,0)--(4.866,0.5);
\draw (7,0.5) node (){4};
\draw (7.5,0.5) node (){5};
\draw (7,0) node (){1};
\draw (7.5,0) node (){2};
\draw (8,0) node (){3};
\draw (7.5,-0.75) node (){$q^2[2]_q[2]_q$};
\draw (6.75,-0.25)--(8.25,-0.25) (6.75,0.25)--(8.25,0.25) (6.75,0.75)--(7.75,0.75) (6.75,-0.25)--(6.75,0.75) (7.25,-0.25)--(7.25,0.75) (7.75,-0.25)--(7.75,0.75) (8.25,-0.25)--(8.25,0.25);
\draw (9.5,0.5) node (){4};
\draw (11,0) node (){5};
\draw (9.5,0) node (){1};
\draw (10,0) node (){2};
\draw (10.5,0) node (){3};
\draw (10.25,-0.75) node (){$q[4]_q[2]_q$};
\draw (9.25,-0.25)--(11.25,-0.25) (9.25,0.25)--(11.25,0.25) (9.25,0.75)--(9.75,0.75) (9.25,-0.25)--(9.25,0.75) (9.75,-0.25)--(9.75,0.75) (10.25,-0.25)--(10.25,0.25) (10.75,-0.25)--(10.75,0.25) (11.25,-0.25)--(11.25,0.25);
\draw (12.5,0.5) node (){5};
\draw (14,0) node (){4};
\draw (12.5,0) node (){1};
\draw (13,0) node (){2};
\draw (13.5,0) node (){3};
\draw (13.25,-0.75) node (){$q^2[2]_q[2]_q$};
\draw (12.25,-0.25)--(14.25,-0.25) (12.25,0.25)--(14.25,0.25) (12.25,0.75)--(12.75,0.75) (12.25,-0.25)--(12.25,0.75) (12.75,-0.25)--(12.75,0.75) (13.25,-0.25)--(13.25,0.25) (13.75,-0.25)--(13.75,0.25) (14.25,-0.25)--(14.25,0.25);
\draw (17,0) node (){4};
\draw (17.5,0) node (){5};
\draw (15.5,0) node (){1};
\draw (16,0) node (){2};
\draw (16.5,0) node (){3};
\draw (16.5,-0.75) node (){$[5]_q[2]_q[2]_q$};
\draw (15.25,-0.25)--(17.75,-0.25)--(17.75,0.25)--(15.25,0.25)--(15.25,-0.25) (17.25,-0.25)--(17.25,0.25) (15.75,-0.25)--(15.75,0.25) (16.25,-0.25)--(16.25,0.25) (16.75,-0.25)--(16.75,0.25);
\end{tikzpicture}$$

\caption{\label{fig:tabexample} The bowtie graph $G$ and the standard Young tableaux in $\text{SYT}(G)$.}
\end{figure}

\begin{example}
Figure \ref{fig:tabexample} shows the tableaux $T\in\text{SYT}(G)$ for the bowtie graph $G$. By Proposition \ref{prop:tabfirstrow}, because $\{3,5\}\in E(G)$, the entries $1,2,3$ must be in the first row. Now for $k=4$, the only $\bar G$-neighbour of $4$ is $3$, which we have coloured in red, so $\delta^{(4)}=(1,0,0,1,0)$. Note that we index $\delta^{(4)}$ starting at $0$. Then, $R_4(T)=\{3\}$ and $W_4(T)=\{1,4\}$, so the box $4$ can be dropped in the first or the fourth column. If $b_4(T)=1$, then $c^{(4)}_T(G)=q\frac{[2]_q}{[3]_q}$, and if $b_4(T)=4$, then $c^{(4)}_T(G)=\frac{[4]_q}{[3]_q}$. 
Next, when $k=5$, the $\bar G$-neighbours are $3$ and $4$, and we have given the corresponding factors $c^{(5)}_T(G)$ and the final coefficients $c_T(G)$. We have
\begin{equation*}
X_G(\bm x;q)=q^2[2]_q[2]_qe_{32}+(q[4]_q[2]_q+q^2[2]_q[2]_q)e_{41}+[5]_q[2]_q[2]_qe_5.
\end{equation*} \end{example}
\section{Forest triple matrix}\label{section:ftmatrix}
In this section, we use forest triples to define a matrix that will allow us to calculate a chromatic symmetric function $X_{G+H}(\bm x)$ in terms of information about $G$ and $H$. Informally, the idea is that a forest triple of $G+H$ essentially consists of a forest triple of $G+P_j$ for some $j$, along with a forest triple of $H$.

\begin{definition}\label{def:fmat:fti}
Let $G=([n],E)$ be a graph and consider the subset of $\text{FT}(G+P_j)$
\begin{equation*}
\text{FT}^{(i)}(G+P_j)=\{\mathcal F\in\text{FT}(G+P_j): \ \alpha_1=i, \ r=1, \ \{n,n+1,\ldots,n+j-1\}\subseteq T', \ \alpha'_\ell\geq j\},
\end{equation*}
where $\mathcal T=(T,\alpha,r)$ is the tree triple of $\mathcal F$ with $1\in T$ and $\mathcal T'=(T',\alpha',r')$ is the tree triple of $\mathcal F$ with $n\in T'$. In other words, the tree containing vertex $1$ must have a composition with first part $i$, the tree containing vertex $n$ must contain the entire path $P_j$, and its composition must have last part at least $j$. Note that we could have $\mathcal T=\mathcal T'$. \end{definition}
\begin{definition}\label{def:ftm:Fgij} We now define the infinite matrix $F_G$ by
\begin{equation*}
(F_G)_{i,j}=\sum_{\mathcal F\in\text{FT}^{(i)}(G+P_j)}\text{sign}(\mathcal F)e_{\text{type}'(\mathcal F)}.
\end{equation*}
If $G$ is an NUIG, we also define the infinite matrix $F_G(q)$ by
\begin{equation*}
(F_G(q))_{i,j}=\sum_{\mathcal F\in\text{FT}^{(i)}(G+P_j)}\text{sign}(\mathcal F)q^{\text{weight}(\mathcal F)}e_{\text{type}'(\mathcal F)}.
\end{equation*}
\end{definition}
We first calculate some examples.

\begin{proposition}\label{prop:ftp1}
    For the graph $P_1$ with one vertex, $F_{P_1}(q)$ is the infinite identity matrix $I$.
\end{proposition}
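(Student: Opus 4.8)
The plan is to unwind the definitions of $\mathrm{FT}^{(i)}(P_1+P_j)$ and observe that all but one summand vanishes. Since $P_1$ has a single vertex, we have $n=|V(P_1)|=1$, so the sum graph $P_1+P_j$ is just the path $P_j$ on vertex set $[j]$, and the glued vertex $n=1$ of $P_1$ is literally vertex $1$. In particular, in the notation of Definition~\ref{def:fmat:fti} the tree triple $\mathcal T=(T,\alpha,r)$ containing vertex $1$ and the tree triple $\mathcal T'=(T',\alpha',r')$ containing vertex $n$ coincide, so $\mathcal T=\mathcal T'$ and $\alpha=\alpha'$, $r=r'$.

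Next I would determine $\mathrm{FT}^{(i)}(P_1+P_j)$ explicitly. The condition $\{n,n+1,\ldots,n+j-1\}=\{1,\ldots,j\}\subseteq T'$ forces $T'$ to be a spanning subtree of $P_j$; since $P_j$ is itself a tree, $T'=P_j$ (and this is trivially an NBC tree, as $P_j$ has no cycles). Hence any $\mathcal F\in\mathrm{FT}^{(i)}(P_1+P_j)$ consists of the single tree triple $\mathcal F=(\mathcal T)$ with $T=T'=P_j$ and $\alpha=\alpha'$ a composition of $j$. The requirement $\alpha'_\ell\ge j$ then forces $\alpha$ to have a single part, namely $\alpha=(j)$, so $\alpha_1=j$ and $r=1$ is the only admissible value. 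Therefore $\mathrm{FT}^{(i)}(P_1+P_j)$ is empty unless $i=j$, in which case it consists of exactly one element, $\mathcal F=\bigl((P_j,(j),1)\bigr)$.

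Finally I would evaluate the three statistics attached to this unique forest triple. Its sign is $(-1)^{\ell((j))-1}=(-1)^0=1$. For its weight, $\mathrm{list}(P_j)$ reads the path in the order $1,2,\ldots,j$, the identity permutation, which has no $P_j$-inversions, and $r-1=0$, so $\mathrm{weight}(\mathcal F)=0$. Its reduced type is $\mathrm{type}(\mathcal F)=(j)$ with one copy of $\alpha^{(1)}_1=j$ deleted, i.e.\ the empty partition, so $e_{\mathrm{type}'(\mathcal F)}=1$. Plugging these into Definition~\ref{def:ftm:Fgij} gives $(F_{P_1}(q))_{i,j}=1$ if $i=j$ and $0$ otherwise, that is, $F_{P_1}(q)=I$.

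There is no real obstacle here; the only points requiring care are the off-by-one in the vertex labelling (that $n=1$, so the glued-on copy of $P_j$ occupies all of $[j]$ and must lie entirely in $T'$) and the fact that a path has a unique spanning subtree, namely itself, which collapses the whole sum to a single term.
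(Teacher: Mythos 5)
Your proposal is correct and follows essentially the same route as the paper: since $n=1$ we have $P_1+P_j=P_j$, the two distinguished tree triples coincide, and the conditions force the unique forest triple $\bigl((P_j,(j),1)\bigr)$, which contributes $1$ exactly when $i=j$. Your write-up just spells out in more detail the sign, weight, and reduced-type computations that the paper leaves implicit.
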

\begin{proof}
    We are considering forest triples of $P_1+P_j=P_j$. Since $n=1$, then $\ttrip=\ttrip'$, meaning there is only 1 forest triple, which consists of the single tree triple $\mathcal T=(P_j,j,1)$.
\end{proof}

\begin{proposition}\label{prop:ftp2}
For the two-vertex path $P_2$, we have
\begin{equation*}
(F_{P_2}(q))_{i,j}=\begin{cases} q[j-1]_qe_j&\text{ if }i=1,\\
1&\text{ if }i=j+1,\\
0&\text{ otherwise}.\end{cases}
\hspace{40pt}
F_{P_2}(q)=\left[\begin{matrix}
0&qe_2&q[2]_qe_3&q[3]_qe_4&\cdots\\
1&0&0&0&\cdots\\
0&1&0&0&\cdots\\
0&0&1&0&\cdots\\
0&0&0&1&\cdots\\
\vdots&\vdots&\vdots&\vdots&\ddots\\
\end{matrix}\right].
\end{equation*}
\end{proposition}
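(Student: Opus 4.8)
The plan is to directly enumerate the forest triples in $\text{FT}^{(i)}(P_2+P_j)$ for each pair $(i,j)$. Recall $P_2$ has vertices $\{1,2\}$ with the single edge $\{1,2\}$, and $P_2+P_j$ is obtained by gluing vertex $2$ of $P_2$ to vertex $1$ of $P_j$; since $P_2+P_j = P_{j+1}$, this is just the path on $j+1$ vertices with vertex set $\{1,\ldots,j+1\}$, and vertex $n=2$ of the definition corresponds to the second vertex of this path. A forest triple of a path is particularly simple: the only NBC tree containing a given set of vertices that induces a connected subgraph is the subpath itself, and an NBC forest is determined by choosing where to break the path into subpaths. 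So I would first observe that every $\mathcal F \in \text{FT}(P_{j+1})$ corresponds to a way of writing the vertex set as an ordered concatenation of intervals (the trees $T_1,\ldots,T_m$), together with a composition $\alpha^{(i)}$ of each interval's size and an integer $r_i$ with $1 \le r_i \le \alpha^{(i)}_1$.

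Next I would impose the four constraints defining $\text{FT}^{(i)}(P_2+P_j)$: the tree $T$ containing vertex $1$ has $\alpha_1 = i$; we have $r = r_1 = 1$; the tree $T'$ containing vertex $2$ contains all of $\{2,3,\ldots,j+1\}$ (i.e. the whole "$P_j$ part"); and $\alpha'_\ell \ge j$. The key case split is on whether $\mathcal T = \mathcal T'$, equivalently whether the first tree $T_1$ contains vertex $2$ or not. \textbf{Case $i \ge 2$:} then since $\alpha_1 = i \ge 2$, the tree $T$ containing $1$ has at least $2$ vertices, so $1$ and $2$ lie in the same tree, forcing $\mathcal T = \mathcal T'$ and $T = T' = \{1,\ldots,j+1\}$ (it must contain all of $\{2,\ldots,j+1\}$). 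So there is a single tree, $\alpha = \alpha'$ is a composition of $j+1$ with $\alpha_1 = i$ and $\alpha_\ell \ge j$; the only way to have both $\alpha_1 = i \ge 2$ and $\alpha_\ell \ge j$ with $|\alpha| = j+1$ is $\alpha = (i, \text{rest})$ — but $\alpha_\ell \ge j$ and $\alpha_1 = i \ge 2$ forces $i + j \le j+1$, i.e. $i \le 1$, a contradiction unless $\alpha$ has a single part; a single-part composition would need $i = j+1$ and $i \ge j$, consistent, giving $\alpha = (j+1)$. Hmm — so actually the surviving subcase is $\alpha = (j+1)$, requiring $i = j+1$; then $r$ ranges over $1 \le r \le \alpha_1 = j+1$ but we require $r = 1$, so there is exactly one such $\mathcal F$, with $\text{sign} = +1$, $\text{weight} = 0$, and $\text{type}'(\mathcal F) = \emptyset$, contributing $1$. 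This matches the claimed value $1$ when $i = j+1$, and $0$ otherwise for $i \ge 2$. \textbf{Case $i = 1$:} then $\alpha_1 = 1$, so the tree $T$ containing vertex $1$ is either the singleton $\{1\}$ (and then $\mathcal T \ne \mathcal T'$, with $T' = \{2,\ldots,j+1\}$ a path on $j$ vertices), or $T$ contains vertex $2$ as well — but then $T = T' \supseteq \{2,\ldots,j+1\}$ so $T = \{1,\ldots,j+1\}$ and $\alpha$ is a composition of $j+1$ with $\alpha_1 = 1$ and $\alpha_\ell \ge j$, forcing $\ell = 2$ and $\alpha = (1, j)$. I would work out both subcases: the subcase $\mathcal T \ne \mathcal T'$ gives $\mathcal F = (\mathcal T_1 = (\{1\}, (1), 1), \mathcal T_2 = (\{2,\ldots,j+1\}, \alpha', r'))$ where $\alpha'$ is a composition of $j$ with $\alpha'_\ell \ge j$, forcing $\alpha' = (j)$, and $1 \le r' \le j$; the subcase $\mathcal T = \mathcal T'$ gives $\alpha = (1,j)$ with $r = 1$.

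Finally I would compute the signs and weights. In the subcase $\mathcal T \ne \mathcal T'$ with $\alpha' = (j)$: $\text{sign}(\mathcal F) = (-1)^{(\ell(\alpha^{(1)})-1) + (\ell(\alpha^{(2)})-1)} = (-1)^{0+0} = +1$; since $P_{j+1}$ is an NUIG, $\text{list}(T_1)\text{list}(T_2) = (1)(2,3,\ldots,j+1)$ is the identity permutation on $\{1,\ldots,j+1\}$ so $\text{inv}_G = 0$, and the weight is $(r_1 - 1) + (r' - 1) = r' - 1$, which ranges over $\{0,1,\ldots,j-1\}$ as $r'$ ranges over $\{1,\ldots,j\}$; also $\text{type}(\mathcal F) = \text{sort}((1)(j)) = (j,1)$ and, removing one instance of $\alpha^{(1)}_1 = 1$, $\text{type}'(\mathcal F) = (j)$. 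So this subcase contributes $\sum_{r'=1}^{j} q^{r'-1} e_j = [j]_q e_j$. In the subcase $\mathcal T = \mathcal T'$, $\alpha = (1,j)$, $r=1$: $\text{sign} = (-1)^{\ell(\alpha)-1} = (-1)^1 = -1$; $\text{list}(T) = (1,2,\ldots,j+1)$ is the identity so $\text{inv}_G = 0$ and $\text{weight} = r - 1 = 0$; $\text{type} = \text{sort}((1,j)) = (j,1)$, and removing one instance of $\alpha^{(1)}_1 = 1$ gives $\text{type}' = (j)$; so this contributes $-e_j$. Adding these, the $i=1$ entry is $[j]_q e_j - e_j = (1 + q + \cdots + q^{j-1} - 1) e_j = q[j-1]_q e_j$, as claimed. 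All other $(i,j)$ entries are $0$ because no forest triple survives the constraints. The main obstacle, such as it is, is bookkeeping: carefully checking the constraint $\alpha'_\ell \ge j$ forces $\alpha' = (j)$ (resp. $\alpha = (1,j)$) and that no other compositions slip through, and correctly tracking how $\text{type}'$ removes exactly one instance of $\alpha^{(1)}_1$; there is no real mathematical difficulty beyond this case analysis. Assembling the cases then gives the matrix $F_{P_2}(q)$ displayed in the statement, and setting $q=1$ (or reading off from $F_G(q)$) gives $F_{P_2}$.
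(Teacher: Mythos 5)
Your proposal is correct and follows essentially the same route as the paper: a direct enumeration of the forest triples of $P_2+P_j=P_{j+1}$, split according to whether vertices $1$ and $2$ lie in the same tree, with the constraint $\alpha'_\ell\geq j$ forcing $\alpha'=(j)$ (respectively $\alpha=(1,j)$ or $\alpha=(j+1)$), yielding $[j]_qe_j-e_j=q[j-1]_qe_j$ for $i=1$ and $1$ for $i=j+1$. Your sign, weight, and reduced-type computations all match the paper's.
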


\begin{proof}
We are considering forest triples of $P_2+P_j=P_{j+1}$. If the vertices $1$ and $2$ are in different tree triples $\mathcal T=(T,\alpha,r)$ and $\mathcal T'=(T',\alpha',r')$, then we must have $\alpha=1$ and $r=1$. Because $\alpha'_\ell\geq j$, we must have $\alpha'=j$ and there are $j$ possibilities corresponding to the choices of $1\leq r'\leq j$. If the vertices $1$ and $2$ are in the same tree triple $\mathcal T=(T,\alpha,r)$, then $T$ must be the entire path $P_{j+1}$. Because $\alpha_\ell\geq j$, the only possibilities are $\alpha=(j+1)$ and $\alpha=1\ j$, and we must have $r=1$. These forest triples are given in Figure \ref{fig:ftp2example}.
\end{proof}
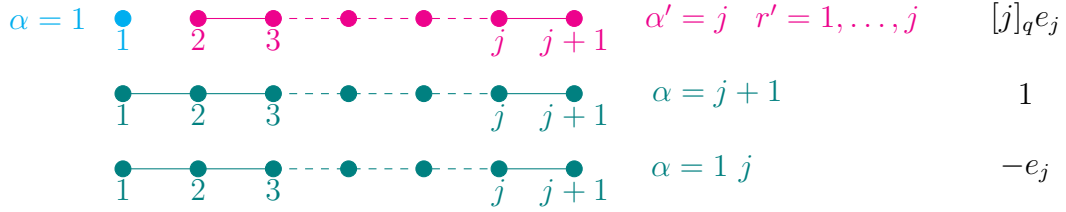
\begin{figure}
$$\begin{tikzpicture}
\node at (7.9,-1) [color=teal] {$\alpha=j+1$};
\node at (7.7,-2) [color=teal] {$\alpha=1 \ j$};
\node at (7.5,0) [color=magenta] {$\alpha'=j$};
\node at (9.5,0) [color=magenta] {$r'=1,\ldots,j$};
\node at (-1,0) [color=cyan] {$\alpha=1$};
\node at (12,-1) {$1$};
\node at (12,-2) {$-e_j$};
\node at (12,0) {$[j]_qe_j$};
\filldraw [color=teal](0,-1) circle (3pt) node[align=center,below,color=teal] (1){1};
\filldraw [color=teal](1,-1) circle (3pt) node[align=center,below,color=teal] (2){2};
\filldraw [color=teal](2,-1) circle (3pt) node[align=center,below,color=teal] (3){3};
\filldraw [color=teal](3,-1) circle (3pt) node[align=center,below,color=teal] (){};
\filldraw [color=teal](4,-1) circle (3pt) node[align=center,below,color=teal] (){};
\filldraw [color=teal](5,-1) circle (3pt) node[align=center,below,color=teal] ($j$){$j$};
\filldraw [color=teal](6,-1) circle (3pt) node[align=center,below,color=teal] ($j+1$){$j+1$};
\draw [color=teal](0,-1)--(2,-1) (5,-1)--(6,-1);
\draw [color=teal, dashed] (2,-1)--(5,-1);

\filldraw [color=teal](0,-2) circle (3pt) node[align=center,below,color=teal] (1){1};
\filldraw [color=teal](1,-2) circle (3pt) node[align=center,below,color=teal] (2){2};
\filldraw [color=teal](2,-2) circle (3pt) node[align=center,below,color=teal] (3){3};
\filldraw [color=teal](3,-2) circle (3pt) node[align=center,below,color=teal] (){};
\filldraw [color=teal](4,-2) circle (3pt) node[align=center,below,color=teal] (){};
\filldraw [color=teal](5,-2) circle (3pt) node[align=center,below,color=teal] ($j$){$j$};
\filldraw [color=teal](6,-2) circle (3pt) node[align=center,below,color=teal] ($j+1$){$j+1$};
\draw [color=teal](0,-2)--(2,-2) (5,-2)--(6,-2);
\draw [color=teal, dashed] (2,-2)--(5,-2);

\filldraw [color=cyan](0,0) circle (3pt) node[align=center,below,color=cyan] (1){1};
\filldraw [color=magenta](1,0) circle (3pt) node[align=center,below,color=magenta] (2){2};
\filldraw [color=magenta](2,0) circle (3pt) node[align=center,below,color=magenta] (3){3};
\filldraw [color=magenta](3,0) circle (3pt) node[align=center,below,color=magenta] (){};
\filldraw [color=magenta](4,0) circle (3pt) node[align=center,below,color=magenta] (){};
\filldraw [color=magenta](5,0) circle (3pt) node[align=center,below,color=magenta] ($j$){$j$};
\filldraw [color=magenta](6,0) circle (3pt) node[align=center,below,color=magenta] ($j+1$){$j+1$};
\draw [color=magenta] (1,0)--(2,0) (5,0)--(6,0);
\draw [color=magenta, dashed] (2,0)--(5,0);

\end{tikzpicture}
$$
\caption{\label{fig:ftp2example} The forest triples used to calculate $F_{P_2}(q)$.}
\end{figure}
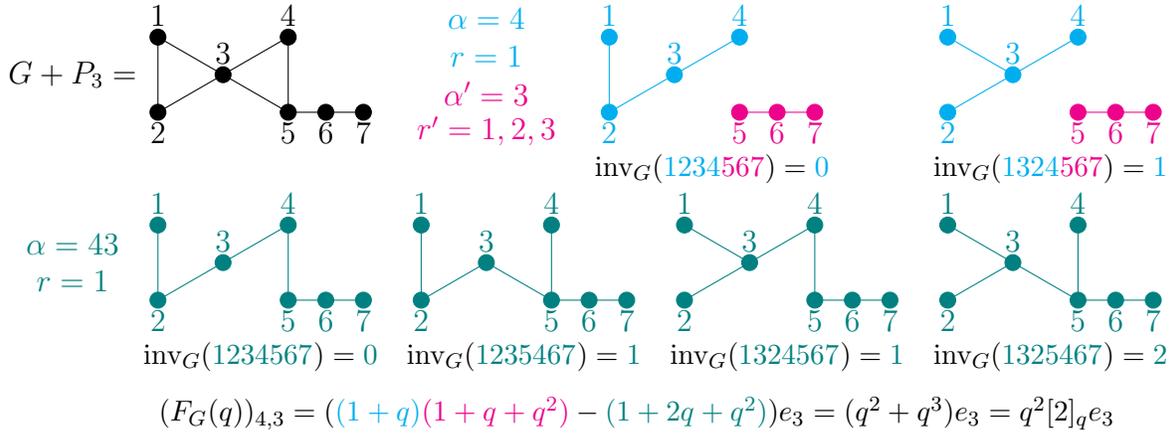
\begin{figure}
\begin{tikzpicture}
\draw (2.5,0) node (1) {$G+P_3=$};
\filldraw (3.634,0.5) circle (3pt) node[align=center,above] (1){1};
\filldraw (3.634,-0.5) circle (3pt) node[align=center,below] (2){2};
\filldraw (4.5,0) circle (3pt) node[align=center,above] (3){3};
\filldraw (5.366,0.5) circle (3pt) node[align=center,above] (4){4};
\filldraw (5.366,-0.5) circle (3pt) node[align=center,below] (5){5};
\filldraw (5.866,-0.5) circle (3pt) node[align=center,below] (6){6};
\filldraw (6.366,-0.5) circle (3pt) node[align=center,below] (7){7};
\draw (3.634,0.5)--(3.634,-0.5)--(4.5,0)--(3.634,0.5) (5.366,0.5)--(5.366,-0.5)--(4.5,0)--(5.366,0.5) (5.366,-0.5)--(6.366,-0.5);

\draw (2.5,-2.25) node (1){$\textcolor{teal}{\alpha=43}$};
\draw (2.5,-2.75) node (1) {$\textcolor{teal}{r=1}$};

\filldraw [color=teal](3.634,-2) circle (3pt) node[align=center,above,color=teal] (1){1};
\filldraw [color=teal](3.634,-3) circle (3pt) node[align=center,below,color=teal] (2){2};
\filldraw [color=teal](4.5,-2.5) circle (3pt) node[align=center,above,color=teal] (3){3};
\filldraw [color=teal](5.366,-2) circle (3pt) node[align=center,above,color=teal] (4){4};
\filldraw [color=teal](5.366,-3) circle (3pt) node[align=center,below,color=teal] (5){5};
\filldraw [color=teal](5.866,-3) circle (3pt) node[align=center,below,color=teal] (6){6};
\filldraw [color=teal](6.366,-3) circle (3pt) node[align=center,below,color=teal] (7){7};
\draw [color=teal] (3.634,-2)--(3.634,-3)--(4.5,-2.5) (4.5,-2.5)--(5.366,-2)--(5.366,-3)--(6.366,-3);
\draw (5,-3.75) node (){\small{$\text{inv}_G(\textcolor{teal}{1234567})=\textcolor{teal}0$}};

\filldraw [color=teal](7.134,-2) circle (3pt) node[align=center,above,color=teal] (1){1};
\filldraw [color=teal](7.134,-3) circle (3pt) node[align=center,below,color=teal] (2){2};
\filldraw [color=teal](8,-2.5) circle (3pt) node[align=center,above,color=teal] (3){3};
\filldraw [color=teal](8.866,-2) circle (3pt) node[align=center,above,color=teal] (4){4};
\filldraw [color=teal](8.866,-3) circle (3pt) node[align=center,below,color=teal] (5){5};
\filldraw [color=teal](9.366,-3) circle (3pt) node[align=center,below,color=teal] (6){6};
\filldraw [color=teal](9.866,-3) circle (3pt) node[align=center,below,color=teal] (7){7};
\draw [color=teal] (7.134,-2)--(7.134,-3)--(8,-2.5) (8.866,-2)--(8.866,-3) (8,-2.5)--(8.866,-3)--(9.866,-3);
\draw (8.5,-3.75) node (){\small{$\text{inv}_G(\textcolor{teal}{1235467})=\textcolor{teal}1$}};

\filldraw [color=teal](10.634,-2) circle (3pt) node[align=center,above,color=teal] (1){1};
\filldraw [color=teal](10.634,-3) circle (3pt) node[align=center,below,color=teal] (2){2};
\filldraw [color=teal](11.5,-2.5) circle (3pt) node[align=center,above,color=teal] (3){3};
\filldraw [color=teal](12.366,-2) circle (3pt) node[align=center,above,color=teal] (4){4};
\filldraw [color=teal](12.366,-3) circle (3pt) node[align=center,below,color=teal] (5){5};
\filldraw [color=teal](12.866,-3) circle (3pt) node[align=center,below,color=teal] (6){6};
\filldraw [color=teal](13.366,-3) circle (3pt) node[align=center,below,color=teal] (7){7};
\draw [color=teal] (10.634,-2)--(11.5,-2.5)--(10.634,-3) (11.5,-2.5)--(12.366,-2)--(12.366,-3)--(13.366,-3);
\draw (12,-3.75) node (){\small{$\text{inv}_G(\textcolor{teal}{1324567})=\textcolor{teal}1$}};

\filldraw [color=teal](14.134,-2) circle (3pt) node[align=center,above,color=teal] (1){1};
\filldraw [color=teal](14.134,-3) circle (3pt) node[align=center,below,color=teal] (2){2};
\filldraw [color=teal](15,-2.5) circle (3pt) node[align=center,above,color=teal] (3){3};
\filldraw [color=teal](15.866,-2) circle (3pt) node[align=center,above,color=teal] (4){4};
\filldraw [color=teal](15.866,-3) circle (3pt) node[align=center,below,color=teal] (5){5};
\filldraw [color=teal](16.366,-3) circle (3pt) node[align=center,below,color=teal] (6){6};
\filldraw [color=teal](16.866,-3) circle (3pt) node[align=center,below,color=teal] (7){7};
\draw [color=teal] (14.134,-2)--(15,-2.5)--(14.134,-3) (15.866,-2)--(15.866,-3) (15,-2.5)--(15.866,-3)--(16.866,-3);
\draw (15.5,-3.75) node (){\small{$\text{inv}_G(\textcolor{teal}{1325467})=\textcolor{teal}2$}};

\node at (8,0.75) {$\textcolor{cyan}{\alpha=4}$};
\node at (8,0.25) {$\textcolor{cyan}{r=1}$};
\node at (8,-0.25) {$\textcolor{magenta}{\alpha'=3}$};
\node at (8,-0.75) {$\textcolor{magenta}{r'=1,2,3}$};

\filldraw [color=cyan](9.634,0.5) circle (3pt) node[align=center,above,color=cyan] (1){1};
\filldraw [color=cyan](9.634,-0.5) circle (3pt) node[align=center,below,color=cyan] (2){2};
\filldraw [color=cyan](10.5,0) circle (3pt) node[align=center,above,color=cyan] (3){3};
\filldraw [color=cyan](11.366,0.5) circle (3pt) node[align=center,above,color=cyan] (4){4};
\filldraw [color=magenta](11.366,-0.5) circle (3pt) node[align=center,below,color=magenta] (5){5};
\filldraw [color=magenta](11.866,-0.5) circle (3pt) node[align=center,below,color=magenta] (6){6};
\filldraw [color=magenta](12.366,-0.5) circle (3pt) node[align=center,below,color=magenta] (7){7};
\draw [color=cyan](9.634,0.5)--(9.634,-0.5)--(10.5,0)--(11.366,0.5); \draw [color=magenta](11.366,-0.5)--(12.366,-0.5);
\draw (11,-1.25) node (){\small{$\text{inv}_G(\textcolor{cyan}{1234}\textcolor{magenta}{567})=\textcolor{cyan}0$}};

\filldraw [color=cyan](14.134,0.5) circle (3pt) node[align=center,above,color=cyan] (1){1};
\filldraw [color=cyan](14.134,-0.5) circle (3pt) node[align=center,below,color=cyan] (2){2};
\filldraw [color=cyan](15,0) circle (3pt) node[align=center,above,color=cyan] (3){3};
\filldraw [color=cyan](15.866,0.5) circle (3pt) node[align=center,above,color=cyan] (4){4};
\filldraw [color=magenta](15.866,-0.5) circle (3pt) node[align=center,below,color=magenta] (5){5};
\filldraw [color=magenta](16.366,-0.5) circle (3pt) node[align=center,below,color=magenta] (6){6};
\filldraw [color=magenta](16.866,-0.5) circle (3pt) node[align=center,below,color=magenta] (7){7};
\draw [color=cyan](14.134,-0.5)--(15,0)--(14.134,0.5) (15,0)--(15.866,0.5); 
\draw [color=magenta](15.866,-0.5)--(16.866,-0.5);
\draw (15.5,-1.25) node (){\small{$\text{inv}_G(\textcolor{cyan}{1324}\textcolor{magenta}{567})=\textcolor{cyan}1$}};
\draw (10,-4.5) node [](){\small{$(F_G(q))_{4,3}=(\textcolor{cyan}{(1+q)}\textcolor{magenta}{(1+q+q^2)}-\textcolor{teal}{(1+2q+q^2)})e_3=(q^2+q^3)e_3=q^2[2]_qe_3$}};
\end{tikzpicture}

\caption{\label{fig:ftmatrixexample} The forest triples used to calculate $(F_G(q))_{4,3}$.}
\end{figure}
\begin{example}\label{ex:bowtiematrix}
Let us calculate the entry $(F_G(q))_{4,3}$ for the bowtie graph $G$. The forest triples of $\text{FT}^{(4)}(G+P_3)$ are given in Figure \ref{fig:ftmatrixexample}. By calculating the signed weights, we have $(F_G(q))_{4,3}=q^2[2]_qe_3$. 
As an illustration, when $q=1$, we have 
\begin{equation*}
F_G=\left[\begin{matrix}
4e_4&2e_{41}+6e_5&e_{33}+3e_{51}+8e_6&2e_{43}+4e_{61}+10e_7&3e_{53}+5e_{71}+12e_8&\cdots\\
2e_3&e_{31}+4e_4&2e_{41}+6e_5&3e_{51}+8e_6&4e_{61}+10e_7&\cdots\\0&2e_3&e_{31}+4e_4&2e_{41}+6e_5&3e_{51}+8e_6&\cdots\\
2e_1&e_{11}&2e_3&4e_4&6e_5&\cdots\\
4&4e_1&e_{11}&0&0&\cdots\\
0&4&4e_1&e_{11}&0&\cdots\\
0&0&4&4e_1&e_{11}&\cdots\\
0&0&0&4&4e_1&\cdots\\
0&0&0&0&4&\cdots\\
\vdots&\vdots&\vdots&\vdots&\vdots&\ddots
\end{matrix}\right].
\end{equation*}
\end{example}

Although $F_G$ is an infinite matrix, we now show that every column contains finitely many nonzero entries.

\begin{proposition}\label{prop:ftmatrixzeroes}
Let $G=([n],E)$ be a graph. If $i\geq n+j$, or if $j\geq i\geq n\geq 2$, then $\text{FT}^{(i)}(G+P_j)$ is empty, so $(F_G)_{i,j}=0$.
\end{proposition}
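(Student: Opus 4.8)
The plan is to bound, for any forest triple $\mathcal F\in\text{FT}^{(i)}(G+P_j)$, the number of vertices of the tree $T$ containing vertex $1$ against the total number $n+j-1$ of vertices of $G+P_j$. Write $\mathcal T=(T,\alpha,r)$ for the tree triple of $\mathcal F$ containing vertex $1$ and $\mathcal T'=(T',\alpha',r')$ for the one containing vertex $n$ (possibly $\mathcal T=\mathcal T'$), as in Definition~\ref{def:fmat:fti}. The key elementary fact is that $\alpha$ is a composition of $|T|$ with first part $i$, so $i=\alpha_1\le|\alpha|=|T|\le n+j-1$. This already settles the first assertion: if $i\ge n+j$ we get a contradiction, so $\text{FT}^{(i)}(G+P_j)$ is empty.

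For the second assertion, suppose $j\ge i\ge n\ge 2$; then $j\ge 2$, so $P_j$ contributes the path edges $\{n,n+1\},\{n+1,n+2\},\ldots,\{n+j-2,n+j-1\}$, and these are the only edges of $G+P_j$ meeting any of $n+1,\ldots,n+j-1$. Hence deleting vertex $n$ separates $\{1,\ldots,n-1\}$ from $\{n+1,\ldots,n+j-1\}$. I would then split into two cases. If $\mathcal T\ne\mathcal T'$, then $n\notin T$; since $T$ is connected and contains $1$, the cut-vertex observation forces $V(T)\subseteq\{1,\ldots,n-1\}$, so $|T|\le n-1<n\le i=\alpha_1\le|T|$, a contradiction. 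If $\mathcal T=\mathcal T'$, then $T$ contains the $j+1$ distinct vertices $1,n,n+1,\ldots,n+j-1$ (distinct because $n\ge 2$), so $|T|\ge j+1$, while Definition~\ref{def:fmat:fti} gives $\alpha_1=i$ and $\alpha_{\ell(\alpha)}\ge j$. When $\ell(\alpha)\ge 2$, the two distinct positive parts $\alpha_1$ and $\alpha_{\ell(\alpha)}$ give $i+j\le\alpha_1+\alpha_{\ell(\alpha)}\le|\alpha|=|T|\le n+j-1$, forcing $i\le n-1$, a contradiction; and when $\ell(\alpha)=1$, the condition $\alpha_{\ell(\alpha)}\ge j$ reads $i\ge j$, so $i=j$ and $|T|=|\alpha|=i=j<j+1\le|T|$, again a contradiction. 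In all cases $\text{FT}^{(i)}(G+P_j)$ is empty, hence $(F_G)_{i,j}=0$.

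I do not expect a genuine obstacle here: once the cut-vertex structure of $G+P_j$ is exploited, the argument is a size count. The only point needing a little care is the subcase $\mathcal T=\mathcal T'$, where one must combine the hypothesis $\alpha'_\ell\ge j$ with the fact that all $j+1$ vertices $1,n,\ldots,n+j-1$ lie in $T$, and separately dispose of the degenerate possibility that $\alpha$ consists of a single part.
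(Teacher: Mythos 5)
Your proof is correct and takes essentially the same approach as the paper: the first claim is the same size bound, and for the second the paper likewise reduces to $\mathcal T=\mathcal T'$ (your case $\mathcal T\neq\mathcal T'$ is there disposed of implicitly, since $|T|=|\alpha|\geq\alpha_1=i\geq n$ forces $n\in T$ by the same cut-vertex observation you make explicit) and then runs the identical $\ell(\alpha)\geq 2$ versus $\ell(\alpha)=1$ count against the $n+j-1$ available vertices.
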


\begin{proof}
If $i\geq n+j$, then we cannot have $\alpha_1=i$ because $G+P_j$ has only $(n+j-1)$ vertices. If $j\geq i\geq n\geq 2$, then since $|T|=|\alpha|\geq \alpha_1 =i\geq n$, we must have $n\in T$ and $\mathcal T=\mathcal T'$. Now the tree $T$ must contain the $(j+1)$ vertices $1,n,n+1,\ldots,n+j-1$ and we must have $\alpha_1=i$ and $\alpha_\ell\geq j$. But now if $\ell(\alpha)\geq 2$, we have $|\alpha|\geq i+j\geq n+j$, while $G+P_j$ has only $(n+j-1)$ vertices, and if $\ell(\alpha)=1$, then $|\alpha|=i\leq j$, but $T$ has at least $(j+1)$ vertices.
\end{proof}

We will often be interested in the first column of $F_G$, so we introduce the following notation.

\begin{definition}
We define the infinite row vectors \begin{equation*}\vec v=\left[\begin{matrix}e_1&2e_2&3e_3&\cdots\end{matrix}\right], \ \vec v(q)=\left[\begin{matrix}e_1&[2]_qe_2&[3]_qe_3&\cdots\end{matrix}\right],\text{ and }\vec w=\left[\begin{matrix}1&0&0&\cdots\end{matrix}\right].\end{equation*} 
For a graph $G=([n],E)$, we define the symmetric function \begin{equation*}X^{(i)}_G(\bm x)=(F_G)_{i,1}=\sum_{\substack{\mathcal F\in\text{FT}(G)\\\alpha^{(1)}_1=i, \ r_1=1}}\text{sign}(\mathcal F)e_{\text{type}'(\mathcal F)}.\end{equation*} 
If $G$ is an NUIG, we define $X^{(i)}_G(\bm x;q)=(F_G(q))_{i,1}$.
\end{definition}

Now we can calculate $X_G(\bm x)$ as a matrix multiplication. Because the columns of $F_G$ have finitely many nonzero entries by Proposition~\ref{prop:ftmatrixzeroes}, the matrix multiplications make sense.

\begin{proposition}\label{prop:xmatrix}
We have
\begin{equation}\label{eq:xmatrix}
X_G(\bm x)=\vec v F_G\vec w^T.
\end{equation}
Similarly, if $G$ is an NUIG, then $X_G(\bm x;q)=\vec v(q)F_G(q)\vec w^T$.
\end{proposition}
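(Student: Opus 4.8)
The plan is to unwind both sides of \eqref{eq:xmatrix} and match them term by term against the forest-triple formula $X_G(\bm x)=\sum_{\mathcal F\in\text{FT}(G)}\text{sign}(\mathcal F)e_{\text{type}(\mathcal F)}$ of \cite[Theorem 5.10]{qforesttriples}, exploiting the fact that $P_1$ is a single vertex, so $G+P_1=G$. First I would note that, since $\vec w^T$ is the first standard basis vector, $F_G\vec w^T$ is simply the first column of $F_G$, and hence
$$\vec v F_G\vec w^T=\sum_{i\geq 1}ie_i\,(F_G)_{i,1}=\sum_{i\geq 1}ie_i\,X^{(i)}_G(\bm x),$$
a finite sum by Proposition~\ref{prop:ftmatrixzeroes}. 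The goal is then to show that this equals $X_G(\bm x)$.

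The next step is to make $X^{(i)}_G(\bm x)$ concrete. With $j=1$ the attached path $P_1$ is just the vertex $n$, so the conditions $\{n\}\subseteq T'$ and $\alpha'_\ell\geq 1$ in Definition~\ref{def:fmat:fti} hold automatically, giving
$$X^{(i)}_G(\bm x)=(F_G)_{i,1}=\sum_{\substack{\mathcal F\in\text{FT}(G)\\ \alpha^{(1)}_1=i,\ r_1=1}}\text{sign}(\mathcal F)e_{\text{type}'(\mathcal F)}.$$
Now I would invoke two structural facts about a forest triple $\mathcal F$: (i) neither $\text{sign}(\mathcal F)$ nor $\text{type}(\mathcal F)$ depends on the integer $r_1$ attached to the tree triple containing vertex $1$, and (ii) $\text{type}(\mathcal F)$ is $\text{type}'(\mathcal F)$ with one copy of $\alpha^{(1)}_1$ restored, so $e_{\text{type}(\mathcal F)}=e_{\alpha^{(1)}_1}e_{\text{type}'(\mathcal F)}$. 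Consequently, resetting $r_1$ to $1$ defines an $\alpha^{(1)}_1$-to-one map from $\text{FT}(G)$ onto $\{\mathcal F\in\text{FT}(G):r_1=1\}$ that preserves sign and type, whence
$$X_G(\bm x)=\sum_{\mathcal F\in\text{FT}(G)}\text{sign}(\mathcal F)e_{\text{type}(\mathcal F)}=\sum_{\substack{\mathcal F\in\text{FT}(G)\\ r_1=1}}\alpha^{(1)}_1\,\text{sign}(\mathcal F)\,e_{\alpha^{(1)}_1}e_{\text{type}'(\mathcal F)}=\sum_{i\geq 1}ie_i\,X^{(i)}_G(\bm x),$$
the last step grouping forest triples by the value $i=\alpha^{(1)}_1$. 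This proves \eqref{eq:xmatrix}.

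For the refined NUIG statement I would run the identical argument while carrying the exponent of $q$. Writing $\text{weight}(\mathcal F)=(r_1-1)+w(\mathcal F)$ with $w(\mathcal F)=\text{inv}_G(\text{list}(T_1)\cdots\text{list}(T_m))+\sum_{i=2}^m(r_i-1)$ independent of $r_1$ (and equal to $\text{weight}(\mathcal F)$ when $r_1=1$), the fibre of the reset-$r_1$ map over a forest triple with $\alpha^{(1)}_1=i$ contributes the polynomial $\sum_{r_1=1}^{i}q^{r_1-1}=[i]_q$ in place of the integer $i$; since $[i]_qe_i$ is exactly the $i$-th entry of $\vec v(q)$, \cite[Theorem 3.4]{qforesttriples} yields $X_G(\bm x;q)=\sum_{i\geq 1}[i]_qe_i\,X^{(i)}_G(\bm x;q)=\vec v(q)F_G(q)\vec w^T$.

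I expect the whole proof to be essentially bookkeeping; the one place deserving genuine care is the reduction $G+P_1=G$ together with the claim that the auxiliary conditions in Definition~\ref{def:fmat:fti} become vacuous when $j=1$, and the verification that $\text{sign}$, $\text{type}'$, and $w(\mathcal F)$ are all truly independent of $r_1$. The $q$-weighted case is the fussiest, but only because it rests on the elementary identity $\sum_{r=1}^{i}q^{r-1}=[i]_q$.
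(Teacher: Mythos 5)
Your proposal is correct and is essentially the paper's own argument: both unwind $\vec vF_G\vec w^T$ as $\sum_{i\geq 1}ie_i(F_G)_{i,1}$ using $G+P_1=G$, and then observe that grouping forest triples by $\alpha^{(1)}_1=i$ and summing over the $i$ choices of $r_1$ (contributing $[i]_q$ via $q^{r_1-1}$ in the quasisymmetric case) recovers the forest-triple expansion of $X_G(\bm x)$. Your write-up just makes explicit the bookkeeping that the paper compresses into the phrase ``the factor of $i$ accounts for the possible choices of $r$.''
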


\begin{proof}
We have
\begin{equation*}
X_G(\bm x)=\sum_{i\geq 1}ie_i\left(\sum_{\mathcal F\in\text{FT}^{(i)}(G+P_1)}\text{sign}(\mathcal F)e_{\text{type}'(\mathcal F)}\right)=\sum_{i\geq 1}ie_i(F_G)_{i,1}=\vec vF_G\vec w^T,
\end{equation*}
where the factor of $i$ accounts for the possible choices of $1\leq r_i\leq\alpha^{(1)}_1=i$. The proof of the second statement is similar.
\end{proof}

In other words, $X_G(\bm x)$ is the dot product between $\vec v$ and the first column of $F_G$, consisting of the $X^{(i)}_G(\bm x)$. In particular, if $F_G$ (respectively $F_G(q)$ if $G$ is an NUIG) has all $e$-positive entries, then $X_G(\bm x)$ (respectively $X_G(\bm x;q)$) is $e$-positive.

\begin{example}
For the bowtie graph $G$, we can calculate $X_G(\bm x;q)$ as
\begin{align*}
X_G(\bm x;q)&=\vec v(q)F_G(q)\vec w^T\\\nonumber&=e_1\cdot q^2[2]_q[2]_qe_4+[2]_qe_2\cdot q^2[2]_qe_3+[4]_qe_4\cdot q[2]_qe_1+[5]_qe_5\cdot[2]_q[2]_q\\\nonumber&=q^2[2]_q[2]_qe_{32}+q[3]_q[2]_qe_{41}+[5]_q[2]_q[2]_qe_5.
\end{align*}\end{example}

We now prove that the operation of gluing graphs corresponds to multiplying matrices.

\begin{proposition}\label{prop:ftmult}
For graphs $G$ and $H$, we have
\begin{equation*}
F_{G+H}=F_GF_H.
\end{equation*}
Moreover, if $G$ and $H$ are NUIGs, then $F_{G+H}(q)=F_G(q)F_H(q)$.
\end{proposition}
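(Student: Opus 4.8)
The plan is to prove both statements at once by a single bijection that is compatible with signs, types, and (in the NUIG case) weights. Since gluing is associative, $(G+H)+P_j = G+(H+P_j)$, a forest triple counted by $(F_{G+H})_{i,j}$ lives in $\text{FT}^{(i)}(G+K)$ for $K=H+P_j$. The key structural fact is that in any sum $G+K$ the shared vertex $n$ is a cut vertex separating $\{1,\dots,n-1\}$ from $\{n+1,\dots\}$; hence every cycle, and therefore every broken circuit, lies entirely on the $G$-side or entirely on the $K$-side, and the fixed edge order restricts to the given orders on $G$ and $K$ (this is automatic from the lexicographic order when $G$ and $K$ are NUIGs). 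Consequently every tree of a forest triple of $G+K$ lies inside $\{1,\dots,n-1\}$, lies inside $\{n+1,\dots\}$, or is the unique tree $T^*$ through vertex $n$, and in the last case $T^*=T_1\cup T_2$ splits uniquely at $n$ into an NBC subtree $T_1\ni n$ of $G$ and an NBC subtree $T_2\ni n$ of $K$.

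First I would build the bijection between $\text{FT}^{(i)}((G+H)+P_j)$ and the disjoint union over $k\geq 1$ of $\{(\mathcal F_1,\mathcal F_2):\ \mathcal F_1\in\text{FT}^{(i)}(G+P_k),\ \mathcal F_2\in\text{FT}^{(k)}(H+P_j)\}$. Given $\mathcal F$ on the left with distinguished tree triple $\mathcal T^*=(T^*,\alpha^*,r^*)$, split $T^*=T_1\cup T_2$ as above. The composition $\alpha^*$ is then cut at a forced place: the suffix sums $\alpha^*_m<\alpha^*_{m-1}+\alpha^*_m<\dots<\alpha^*_1+\dots+\alpha^*_m=|T_1|+|T_2|-1$ increase strictly and straddle $|T_2|$, so there is a unique $p$ with $\alpha^*_{p+1}+\dots+\alpha^*_m<|T_2|\leq\alpha^*_p+\dots+\alpha^*_m$; put $k=|T_2|-(\alpha^*_{p+1}+\dots+\alpha^*_m)$, $\beta=(\alpha^*_1,\dots,\alpha^*_p)$, and $\gamma=(k,\alpha^*_{p+1},\dots,\alpha^*_m)$, so that $1\leq k\leq\alpha^*_p$, that $\gamma$ is a composition of $|T_2|$ with $\gamma_1=k$, and that $\beta$ is a composition of $|T_1|+k-1$ with last part $\beta_\ell\geq k$. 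Replacing $\mathcal T^*$ by $(T_1\cup P_k,\beta,r^*)$, and keeping every tree inside $\{1,\dots,n-1\}$, produces $\mathcal F_1\in\text{FT}^{(i)}(G+P_k)$; replacing $\mathcal T^*$ by $(T_2,\gamma,1)$, and keeping every tree inside $\{n+1,\dots\}$, produces $\mathcal F_2\in\text{FT}^{(k)}(H+P_j)$. The inverse map reglues: it concatenates $\alpha^*=\beta\cdot(\gamma_2,\dots,\gamma_c)$, uses the $r$ of the distinguished tree of $\mathcal F_1$ for $r^*$, and splices the distinguished tree of $\mathcal F_2$ in place of the path $P_k$. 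The constraints $\beta_\ell\geq k$ built into $\text{FT}^{(i)}(G+P_k)$ and $r=1$ on the distinguished tree of $\mathcal F_2$ in $\text{FT}^{(k)}(H+P_j)$ are exactly what makes these two maps mutually inverse; the ordering condition $\min(T_1)<\dots$ transfers because $\min(T^*)=\min(T_1)\leq n$ while the distinguished tree of $\mathcal F_2$ has minimum $n$, the smallest vertex of the $K$-side.

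Finally I would check compatibility with the statistics and conclude. The sign is immediate from $(\ell(\beta)-1)+(\ell(\gamma)-1)=\ell(\alpha^*)-1$ together with the fact that all other trees are unchanged. For the reduced type, the multiset of all composition parts of $\mathcal F_1$ and $\mathcal F_2$ together equals that of $\mathcal F$ with one extra $k$ (namely $\gamma_1$); removing from each of $\mathcal F_1$ and $\mathcal F_2$ the first part of its tree through vertex $1$ — an $i$ for $\mathcal F_1$, a $k$ for $\mathcal F_2$ — cancels the $i$ removed from $\mathcal F$ and this extra $k$, so $\text{type}'(\mathcal F_1)\cup\text{type}'(\mathcal F_2)=\text{type}'(\mathcal F)$, whence $e_{\text{type}'(\mathcal F_1)}e_{\text{type}'(\mathcal F_2)}=e_{\text{type}'(\mathcal F)}$. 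In the NUIG case, $\sum(r_i-1)$ splits because the distinguished tree of $\mathcal F_2$ contributes $r-1=0$ and all other $r$'s are unchanged, and the $G$-inversion count splits because $G$-edges lie in $\{1,\dots,n\}$, $(H+P_j)$-edges lie in $\{n,\dots\}$, and because $\text{list}(T^*)$ equals $\text{list}(T_1)$ followed by $\text{list}(T_2)$ with its leading $n$ deleted, while $\text{list}(T_1\cup P_k)$ equals $\text{list}(T_1)$ followed by $(n+1,\dots,n+k-1)$: thus the $\{1,\dots,n\}$-subword of the concatenated list agrees for $\mathcal F$ and $\mathcal F_1$, its $\{n,\dots\}$-subword agrees for $\mathcal F$ and $\mathcal F_2$, and the inserted letters $n+1,\dots,n+k-1$ occur in increasing order after $n$ and create no new inversions. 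Summing $\text{sign}(\mathcal F)q^{\text{weight}(\mathcal F)}e_{\text{type}'(\mathcal F)}$ over each side then gives $(F_{G+H}(q))_{i,j}=\sum_k (F_G(q))_{i,k}(F_H(q))_{k,j}$ — a finite sum by Proposition~\ref{prop:ftmatrixzeroes} — and the same computation with weights dropped gives $F_{G+H}=F_GF_H$. I expect the main obstacle to be precisely this bookkeeping: showing that the cut of $\alpha^*$ is forced and that the two constructions invert each other, and verifying the factorisation of $\text{list}(T^*)$ that prevents $G$-inversions from crossing the cut vertex, rather than any single conceptual step.
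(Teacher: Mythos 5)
Your proposal is correct and takes essentially the same route as the paper: you split the unique tree through the cut vertex $n$ into its $G$-side and $(H+P_j)$-side parts, cut its composition at the forced place (your suffix-sum condition picks out exactly the paper's minimal prefix $t$ with $\alpha_1+\cdots+\alpha_t\geq|T\vert_G|$ and gives the same $k$), attach $P_k$ on one side and prepend $k$ with $r=1$ on the other, and invert by regluing and concatenating compositions. The sign, reduced-type, and weight bookkeeping (including the factorisation of $\text{list}(T^*)$ showing that inversions do not cross the cut vertex) matches the paper's verification, so nothing further is needed.
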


\begin{proof}
We will find a bijection
\begin{equation*}
\text{break}:\text{FT}^{(i)}(G+H+P_j)\to\bigsqcup_{k\geq 1}\text{FT}^{(i)}(G+P_k)\times\text{FT}^{(k)}(H+P_j)
\end{equation*}
such that if $\text{break}(\mathcal F)=(\mathcal F\vert_G,\mathcal F\vert_H)$, then 
\begin{equation}\label{eq:breakprops}
\text{sign}(\mathcal F)=\text{sign}(\mathcal F\vert_G)\text{sign}(\mathcal F\vert_H)\text{ and }e_{\text{type}'(\mathcal F)}=e_{\text{type}'(\mathcal F\vert_G)}e_{\text{type}'(\mathcal F\vert_H)}.
\end{equation}
Then we will have, as desired, that
\begin{align*}
(F_{G+H})_{i,j}&=\sum_{\mathcal F\in\text{FT}^{(i)}(G+H+P_j)}\text{sign}(\mathcal F)e_{\text{type}'(\mathcal F)}\\&=\sum_{k\geq 1}\sum_{\mathcal F\vert_G\in\text{FT}^{(i)}(G+P_k)}\text{sign}(\mathcal F\vert_G)e_{\text{type}'(\mathcal F\vert_G)}\sum_{\mathcal F\vert_H\in\text{FT}^{(k)}(H+P_j)}\text{sign}(\mathcal F\vert_H)e_{\text{type}'(\mathcal F\vert H)}\\&=\sum_{k\geq 1}(F_G)_{i,k}(F_H)_{k,j}=(F_GF_H)_{i,j}.
\end{align*}
We will also show that if $G$ and $H$ are NUIGs, then $\text{weight}(\mathcal F)=\text{weight}(\mathcal F\vert_G)+\text{weight}(\mathcal F\vert_H)$, from which the second statement will follow.\\

Let $\mathcal F\in\text{FT}^{(i)}(G+H+P_j)$. Let $n=|G|$, $n'=|H|$, and let $\mathcal T=(T,\alpha,r)$ be the tree triple of $\mathcal F$ with $n\in T$. Let $T\vert_G$ and $T\vert_H$ be the trees obtained by restricting $T$ to the vertices at most $n$ and at least $n$ respectively. Because $|\alpha|=|T|\geq |T\vert_G|$, there is some minimal $t$ with $\alpha_1+\cdots+\alpha_t\geq |T\vert_G|$; let 
\begin{equation*}
k=\alpha_1+\cdots+\alpha_t-|T\vert_G|+1.
\end{equation*}
We now define the tree triples
\begin{equation*}
\mathcal T\vert_G=(T\vert_G\cup P_k,\alpha_1\cdots\alpha_t,r)\text{ and }\mathcal T\vert_H=(T\vert_H,k \ \alpha_{t+1}\cdots\alpha_\ell,1),
\end{equation*}
where $T\vert_G\cup P_k$ is obtained by attaching the path on vertices $n,\ldots,n+k-1$ to $T\vert_G$. We now define $\mathcal F\vert_G$ to be the forest triple consisting of $\mathcal T\vert_G$ and the tree triples of $\mathcal F$ whose trees contain a vertex less than $n$. We define $\mathcal F\vert_H$ to be the forest triple consisting of $\mathcal T\vert_H$ and the tree triples of $\mathcal F$ whose trees contain a vertex greater than $n$, but we subtract $(n-1)$ from all of the vertices so that they lie in $[n'+k-1]$. Some examples are given in Figure \ref{fig:ftbreakexample}.\\
\begin{figure}
\begin{align*}
&
\begin{tikzpicture}
\node at (5.5,1.5) (){$P_3$};
\node at (2,1.5) (){$G_{}$};
\node at (4,1.5) (){$H_{}$};
\node at (10.5,1.5) (){$G_{}$};
\node at (12,1.5) (){$P_3$};
\node at (14.5,1.5) (){$H_{}$};
\node at (16,1.5) (){$P_3$};
\node [color=teal] at (2,-1.5) () {$\mathcal T$};
\node [color=magenta] at (10.5,-1.5) () {$\mathcal T\vert_G$};
\node [color=cyan] at (14.5,-1.5) () {$\mathcal T\vert_H$};
\node [color=teal] at (2,-2) () {$\alpha=241$};
\node [color=magenta] at (10.5,-2) () {$\alpha=24$};
\node [color=cyan] at (14.5,-2) () {$\alpha=31$};
\draw [dashed,thin,opacity=0.4] (1,0)--(1.5,-0.866) (1.5,0.866)--(2.5,0.866) (1,0)--(2.5,0.866)--(2.5,-0.866)--(1,0) (1.5,0.866)--(1.5,-0.866)--(3,0)--(1.5,0.866) (1.5,0.866)--(2.5,-0.866) (1.5,-0.866)--(2.5,0.866) (3,0)--(4,0)--(4.5,0.866)--(5,0)--(4,0) (9.5,0)--(10,-0.866) (10,0.866)--(11,0.866) (9.5,0)--(11,0.866)--(11,-0.866)--(9.5,0) (10,0.866)--(10,-0.866)--(11.5,0)--(10,0.866) (10,0.866)--(11,-0.866) (10,-0.866)--(11,0.866) (13.5,0)--(14.5,0)--(15,0.866)--(15.5,0)--(14.5,0);
\draw [color=teal] (2.5,0.866)--(3,0)--(2.5,-0.866)--(1.5,-0.866) (3,0)--(3.5,0.866)--(4.5,0.866) (3.5,0.866)--(4,0);
\filldraw [color=gray] (1,0) circle (3pt) node[align=center,below] (1){1};
\filldraw [color=gray] (1.5,0.866) circle (3pt) node[align=center,above] (2){2};
\filldraw [color=teal] (1.5,-0.866) circle (3pt) node[align=center,below] (3){3};
\filldraw [color=teal] (2.5,0.866) circle (3pt) node[align=center,above] (4){4};
\filldraw [color=teal] (2.5,-0.866) circle (3pt) node[align=center,below] (5){5};
\filldraw (3,0) circle (3pt) node[align=center,below] (6){6};
\filldraw [color=teal] (3.5,0.866) circle (3pt) node[align=center,above] (7){7};
\filldraw [color=teal] (4,0) circle (3pt) node[align=center,below] (8){8};
\filldraw [color=teal] (4.5,0.866) circle (3pt) node[align=center,above] (9){9};
\filldraw [color=gray] (5,0) circle (3pt) node[align=center,below] (10){10};
\filldraw [color=gray] (5.5,0) circle (3pt) node[align=center,below] (11){11};
\filldraw [color=gray] (6,0) circle (3pt) node[align=center,below] (12){12};
\draw [color=gray] (5,0)--(6,0) (1,0)--(1.5,0.866);
\draw [arrows = {-Stealth[scale=2]}] (6.5,0)--(9,0);
\draw [arrows = {-Stealth[scale=2]}] (9,0)--(6.5,0);
\draw [color=gray] (9.5,0)--(10,0.866) (16.5,0)--(15.5,0);
\draw [color=magenta] (10,-0.866)--(11,-0.866)--(11.5,0)--(11,0.866) (11.5,0)--(12.5,0);
\draw [color=cyan] (13.5,0)--(14,0.866)--(15,0.866) (14.5,0)--(14,0.866);
\filldraw [color=gray] (9.5,0) circle (3pt) node[align=center,below] (1){1};
\filldraw [color=gray] (10,0.866) circle (3pt) node[align=center,above] (2){2};
\filldraw [color=magenta] (10,-0.866) circle (3pt) node[align=center,below] (3){3};
\filldraw [color=magenta] (11,0.866) circle (3pt) node[align=center,above] (4){4};
\filldraw [color=magenta] (11,-0.866) circle (3pt) node[align=center,below] (5){5};
\filldraw (11.5,0) circle (3pt) node[align=center,below] (6){6};
\filldraw [color=magenta] (12,0) circle (3pt) node[align=center,below] (7){7};
\filldraw [color=magenta] (12.5,0) circle (3pt) node[align=center,below] (8){8};
\node at (13,-0.3) {\huge{,}};
\filldraw (13.5,0) circle (3pt) node[align=center,below] (1){1};
\filldraw [color=cyan] (14,0.866) circle (3pt) node[align=center,above] (2){2};
\filldraw [color=cyan] (14.5,0) circle (3pt) node[align=center,below] (3){3};
\filldraw [color=cyan] (15,0.866) circle (3pt) node[align=center,above] (4){4};
\filldraw [color=gray] (15.5,0) circle (3pt) node[align=center,below] (5){5};
\filldraw [color=gray] (16,0) circle (3pt) node[align=center,below] (6){6};
\filldraw [color=gray] (16.5,0) circle (3pt) node[align=center,below] (7){7};
\end{tikzpicture}\\&
\begin{tikzpicture}
\node at (5.5,1.5) (){$P_3$};
\node at (2,1.5) (){$G_{}$};
\node at (4,1.5) (){$H_{}$};
\node at (10.5,1.5) (){$G_{}$};
\node at (11.75,1.5) (){$P_2$};
\node at (14,1.5) (){$H_{}$};
\node at (15.5,1.5) (){$P_3$};
\node [color=teal] at (2,-1.5) () {$\mathcal T$};
\node [color=magenta] at (10.5,-1.5) () {$\mathcal T\vert_G$};
\node [color=cyan] at (14,-1.5) () {$\mathcal T\vert_H$};
\node [color=teal] at (2,-2) () {$\alpha=52$};
\node [color=magenta] at (10.5,-2) () {$\alpha=5$};
\node [color=cyan] at (14,-2) () {$\alpha=22$};
\draw [dashed,thin,opacity=0.4] (1,0)--(1.5,-0.866) (1.5,0.866)--(2.5,0.866) (1,0)--(2.5,0.866)--(2.5,-0.866)--(1,0) (1.5,0.866)--(1.5,-0.866)--(3,0)--(1.5,0.866) (1.5,0.866)--(2.5,-0.866) (1.5,-0.866)--(2.5,0.866) (3,0)--(4,0)--(4.5,0.866)--(5,0)--(4,0) (9.5,0)--(10,-0.866) (10,0.866)--(11,0.866) (9.5,0)--(11,0.866)--(11,-0.866)--(9.5,0) (10,0.866)--(10,-0.866)--(11.5,0)--(10,0.866) (10,0.866)--(11,-0.866) (10,-0.866)--(11,0.866) (13,0)--(14,0)--(14.5,0.866)--(15,0)--(14,0);
\draw [color=teal] (2.5,0.866)--(3,0)--(2.5,-0.866)--(1.5,-0.866) (3,0)--(3.5,0.866)--(4.5,0.866) (3.5,0.866)--(4,0);
\filldraw [color=gray] (1,0) circle (3pt) node[align=center,below] (1){1};
\filldraw [color=gray] (1.5,0.866) circle (3pt) node[align=center,above] (2){2};
\filldraw [color=teal] (1.5,-0.866) circle (3pt) node[align=center,below] (3){3};
\filldraw [color=teal] (2.5,0.866) circle (3pt) node[align=center,above] (4){4};
\filldraw [color=teal] (2.5,-0.866) circle (3pt) node[align=center,below] (5){5};
\filldraw (3,0) circle (3pt) node[align=center,below] (6){6};
\filldraw [color=teal] (3.5,0.866) circle (3pt) node[align=center,above] (7){7};
\filldraw [color=teal] (4,0) circle (3pt) node[align=center,below] (8){8};
\filldraw [color=teal] (4.5,0.866) circle (3pt) node[align=center,above] (9){9};
\filldraw [color=gray] (5,0) circle (3pt) node[align=center,below] (10){10};
\filldraw [color=gray] (5.5,0) circle (3pt) node[align=center,below] (11){11};
\filldraw [color=gray] (6,0) circle (3pt) node[align=center,below] (12){12};
\draw [color=gray] (5,0)--(6,0) (1,0)--(1.5,0.866);
\draw [arrows = {-Stealth[scale=2]}] (6.5,0)--(9,0);
\draw [arrows = {-Stealth[scale=2]}] (9,0)--(6.5,0);
\draw [color=gray] (9.5,0)--(10,0.866) (15,0)--(16,0);
\draw [color=magenta] (10,-0.866)--(11,-0.866)--(11.5,0)--(11,0.866) (11.5,0)--(12,0);
\draw [color=cyan] (13,0)--(13.5,0.866)--(14.5,0.866) (14,0)--(13.5,0.866);
\filldraw [color=gray] (9.5,0) circle (3pt) node[align=center,below] (1){1};
\filldraw [color=gray] (10,0.866) circle (3pt) node[align=center,above] (2){2};
\filldraw [color=magenta] (10,-0.866) circle (3pt) node[align=center,below] (3){3};
\filldraw [color=magenta] (11,0.866) circle (3pt) node[align=center,above] (4){4};
\filldraw [color=magenta] (11,-0.866) circle (3pt) node[align=center,below] (5){5};
\filldraw (11.5,0) circle (3pt) node[align=center,below] (6){6};
\filldraw [color=magenta] (12,0) circle (3pt) node[align=center,below] (7){7};
\node at (12.5,-0.3) {\huge{,}};
\filldraw (13,0) circle (3pt) node[align=center,below] (1){1};
\filldraw [color=cyan] (13.5,0.866) circle (3pt) node[align=center,above] (2){2};
\filldraw [color=cyan] (14,0) circle (3pt) node[align=center,below] (3){3};
\filldraw [color=cyan] (14.5,0.866) circle (3pt) node[align=center,above] (4){4};
\filldraw [color=gray] (15,0) circle (3pt) node[align=center,below] (5){5};
\filldraw [color=gray] (15.5,0) circle (3pt) node[align=center,below] (6){6};
\filldraw [color=gray] (16,0) circle (3pt) node[align=center,below] (7){7};
\end{tikzpicture}
\end{align*}
\caption{\label{fig:ftbreakexample} Some examples of the map $$\text{break}:\text{FT}^{(2)}(G+H+P_3)\to\bigsqcup_{k\geq 1}\text{FT}^{(2)}(G+P_k)\times\text{FT}^{(k)}(H+P_3).$$}
\end{figure}

We first note that the first part of $\alpha^{(1)}$ has not been affected. By minimality of $t$, we have $\alpha_1+\cdots+\alpha_{t-1}<|T\vert_G|$ and therefore $\alpha_t\geq k$, so we indeed have $\mathcal F\vert_G\in\text{FT}^{(i)}(G+P_k)$. By construction, we have $\mathcal F\vert_H\in\text{FT}^{(k)}(G+P_j)$ and \eqref{eq:breakprops} holds. Given $\mathcal F\vert_G\in\text{FT}^{(i)}(G+P_k)$ and $\mathcal F\vert_H\in\text{FT}^{(k)}(G+P_j)$, the inverse map is given as follows. We first add $(n-1)$ to all of the vertices of trees in $\mathcal F\vert_H$ so that they lie in $[n,n+n'+k-2]$. Then letting $\mathcal T=(T,\alpha,r)\in\mathcal F\vert_G$ be the tree triple with $n\in T$ and letting $\mathcal T'=(T',\alpha',1)\in\mathcal F\vert_H$ be the tree triple with $n\in T'$, we define the tree triple 
\begin{equation}
\mathcal T\cup\mathcal T'=((T\setminus P_k)\cup T',\alpha_1\cdots\alpha_\ell\cdot\alpha'_2\cdots\alpha'_\ell,r),
\end{equation} where $(T\setminus P_k)\cup T'$ is obtained by removing the path on vertices $n,\ldots,n+k-1$ from $T$, and then gluing to $T'$. Then $\mathcal F$ is the forest triple consisting of $\mathcal T\cup\mathcal T'$ and the other tree triples in $\mathcal F\vert_G$ and $\mathcal F\vert_H$. \\

Finally, if $G$ and $H$ are NUIGs, then \begin{align*}\text{inv}_{G+H+P_j}(\text{list}(T))&=\text{inv}_G(\text{list}(T\vert_G))+\text{inv}_{H+P_j}(\text{list}(T\vert_H))\\&=\text{inv}_{G+P_k}(\text{list}(T\vert_G\cup P_k))+\text{inv}_{H+P_j}(\text{list}(T\vert_H)\end{align*}
because $(G+H+P_j)$-inversions in $\text{list}(T)$ appear either in $\text{list}(T\vert_G)$ or $\text{list}(T\vert_H)$ and the path adds no new inversions. So we have $\text{weight}(\mathcal F)=\text{weight}(\mathcal F\vert_G)+\text{weight}(\mathcal F\vert_H)$.
\end{proof}

\begin{remark}
By Proposition \ref{prop:xmatrix}, we can now calculate the chromatic symmetric function $X_{G+H}(\bm x)$ as 
\begin{equation*}
X_{G+H}(\bm x)=\vec vF_{G+H}\vec w^T=\vec vF_GF_H\vec w^T,
\end{equation*}
that is, $X_{G+H}(\bm x)$ is the dot product of $\vec v F_G$, the row vector whose $j$-th entry is $X_{G+P_j}(\bm x)$, and $F_H\vec w^T$, the column vector whose $i$-th entry is $X^{(i)}_H(\bm x)$. So we can calculate $X_{G+H}(\bm x)$ if we know $X_{G+P_j}(\bm x)$ and $X^{(i)}_H(\bm x)$ for every $i,j$. 
\end{remark}

\begin{example}
Because the bowtie graph $G$ from Figure \ref{fig:ftmatrixexample} is the sum of two complete graphs $G=K_3+K_3$, Proposition \ref{prop:ftmult} tells us that $F_G(q)=F_{K_3}(q)^2$. 
\end{example}
\begin{remark}
Proposition \ref{prop:ftp1} and Proposition \ref{prop:ftmult} tell us that the forest triple matrix is a \emph{monoid homomorphism} on graphs under the gluing operation.
\end{remark}

\begin{proposition}\label{prop:ftpn}
For the path $P_n$, we have
\begin{equation}\label{eqn:ftm:fpn}
(F_{P_n}(q))_{i,j}=\sum_{\alpha\vDash n+j-i-1, \ \alpha_\ell\geq j}([\alpha_1]_q-1)\cdots([\alpha_\ell]_q-1)e_{\text{sort}(\alpha)}.
\end{equation}
Note that if $i=n+j-1$, we are including the empty composition in our sum. The summand is the empty product, so $(F_{P_n}(q))_{n+j-1,j}=1$.
\end{proposition}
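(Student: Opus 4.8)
The plan is to induct on $n$, exploiting the factorization $P_n = P_{n-1} + P_2$. By Proposition~\ref{prop:ftmult} this gives $F_{P_n}(q) = F_{P_{n-1}}(q)\,F_{P_2}(q)$, and Proposition~\ref{prop:ftp2} describes $F_{P_2}(q)$ completely, so the argument reduces to matching a short recurrence against the claimed closed form. Two elementary identities will be used: $[j]_q - 1 = q[j-1]_q$, and $e_{\text{sort}(\beta, j)} = e_j\, e_{\text{sort}(\beta)}$ (since $e_\lambda$ depends only on the multiset of parts of $\lambda$). Throughout I adopt the convention from the statement that the empty composition of $0$ is always included in the sum, regardless of the constraint on the last part.

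For the base case $n = 1$, Proposition~\ref{prop:ftp1} gives $F_{P_1}(q) = I$, so I would check that the right-hand side of \eqref{eqn:ftm:fpn} with $n = 1$ equals $1$ if $i = j$ and $0$ otherwise. Indeed it is a sum over compositions $\alpha \vDash j - i$ with $\alpha_\ell \geq j$: when $i = j$ only the empty composition contributes, giving $1$; when $i \neq j$, a nonempty composition of $j - i$ with last part $\geq j$ would force $j - i \geq j$, which is impossible, so the sum is $0$.

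For the inductive step, since $(F_{P_2}(q))_{k,j}$ is nonzero only for $k = 1$ (value $q[j-1]_q e_j$) and $k = j+1$ (value $1$), the matrix product is a two-term sum:
\begin{equation*}
(F_{P_n}(q))_{i,j} = q[j-1]_q e_j \cdot (F_{P_{n-1}}(q))_{i,1} + (F_{P_{n-1}}(q))_{i,j+1}.
\end{equation*}
I would then split the claimed sum for $(F_{P_n}(q))_{i,j}$, ranging over $\alpha \vDash n + j - i - 1$ with $\alpha_\ell \geq j$, into those $\alpha$ with $\alpha_\ell \geq j+1$ and those with $\alpha_\ell = j$. The first group, including the empty composition, is exactly the induction hypothesis for $(F_{P_{n-1}}(q))_{i,j+1}$, since $(n-1) + (j+1) - i - 1 = n + j - i - 1$ and the last-part constraint is unchanged. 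For the second group I would write each $\alpha$ as $(\beta, j)$ with $\beta$ an arbitrary composition of $n - i - 1 = (n-1) + 1 - i - 1$; using the two identities above, this group equals $q[j-1]_q e_j$ times the induction hypothesis for $(F_{P_{n-1}}(q))_{i,1}$, where the constraint ``last part $\geq 1$'' is vacuous. Adding the two groups recovers the displayed recurrence, closing the induction.

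I do not expect a substantive obstacle; the delicate points are all bookkeeping: tracking the empty composition through the split (it lands with the $k = j+1$ term), handling the degenerate indices $i = n-1$ (which forces $\beta = \emptyset$) and $i = n+j-1$ (where only the empty composition survives), and using that $e_\lambda$ ignores the order of parts. An alternative would be to analyze the forest triples of $P_{n+j-1}$ directly---its NBC trees are exactly the consecutive sub-intervals and its lists are identity permutations, so $\text{inv}$ vanishes and the weight is just $\sum_t(r_t-1)$---but this requires unwinding the defining constraints of $\text{FT}^{(i)}$ by hand and appears no shorter.
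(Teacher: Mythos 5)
Your proposal is correct and is essentially the paper's own proof: both use the base cases from Propositions~\ref{prop:ftp1} and \ref{prop:ftp2}, the factorization $P_n=P_{n-1}+P_2$ with Proposition~\ref{prop:ftmult}, and the split of the sum according to whether $\alpha_\ell=j$ or $\alpha_\ell\geq j+1$, together with $[j]_q-1=q[j-1]_q$. The bookkeeping points you flag (empty composition, degenerate indices) are handled implicitly in the paper in exactly the way you describe.
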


\begin{proof}
If $n\leq 2$, then the result follows from Proposition~\ref{prop:ftp1} and Proposition~\ref{prop:ftp2}. For $n\geq 3$, because $P_n=P_{n-1}+P_2$, we can use induction and Proposition \ref{prop:ftmult}. Indeed, defining the polynomial $w_\alpha(q)=([\alpha_1]_q-1)\cdots([\alpha_\ell]_q-1)$, we have
\begin{align*}
(F_{P_n}(q))_{i,j}&=(F_{P_{n-1}}(q)F_{P_2}(q))_{i,j}=(F_{P_{n-1}}(q))_{i,1}q[j-1]_qe_j+(F_{P_{n-1}}(q))_{i,j+1}\\&=\sum_{\alpha\vDash n-1-i}w_\alpha(q)e_{\text{sort}(\alpha)}([j]_q-1)e_j+\sum_{\alpha\vDash n+j-i-1, \ \alpha_\ell\geq j+1}w_\alpha(q)\\&=\sum_{\substack{\alpha\vDash n+j-i-1\\\alpha_\ell=j}}w_\alpha(q)e_{\text{sort}(\alpha)}+\sum_{\substack{\alpha\vDash n+j-i-1\\\alpha_\ell\geq j+1}}w_\alpha(q)e_{\text{sort}(\alpha)}=\sum_{\substack{\alpha\vDash n+j-i-1\\\alpha_\ell\geq j}}w_\alpha(q)e_{\text{sort}(\alpha)}.
\end{align*}
\end{proof}
\begin{remark} We could also calculate $F_{P_n}(q)$ by using the sign-reversing involution in \cite[Proposition 3.7]{qforesttriples} and enumerating the fixed points.\end{remark}
\begin{example}
For the path $P_5$, we have
\begin{equation*}
F_{P_5}(q)=\left[\begin{matrix}
q^2e_{22}+q[3]_qe_4&2q^2[2]e_{32}+q[4]_qe_5&q^2[2]_q[2]_qe_{33}+q[3]_qe_{42}+q[5]_qe_6&\cdots\\
q[2]_qe_3&q^2e_{22}+q[3]_qe_4&q^2[2]_qe_{32}+q[4]_qe_5&\cdots\\
qe_2&q[2]_qe_3&q[3]_qe_4&\cdots\\
0&qe_2&q[2]_qe_3&\cdots\\
1&0&0&\cdots\\
0&1&0&\cdots\\
0&0&1&\cdots\\
\vdots&\vdots&\vdots&\ddots
\end{matrix}\right].
\end{equation*}
\end{example}

In previous work, the authors found sign-reversing involutions on forest triples. This allows us to calculate some more forest triple matrices.

\begin{proposition}\label{prop:ftkn} \cite[Theorem 4.10]{qforesttriples}
For the complete graph $K_n$, we have
\begin{equation*}
(F_{K_n}(q))_{i,j}=\begin{cases}
q^{n-1}[n-2]_q![j-i]_qe_{n-i+j-1}&\text{ if }1\leq i\leq\min\{n-1,j-1\},\\
q^{n-i+j-1}[n-2]_q![i-j]_qe_{n-i+j-1}&\text{ if }\max\{n,j+1\}\leq i\leq n+j-1,\\
0&\text{ otherwise.}
\end{cases}
\end{equation*}
For the almost-complete graph $K_n'$ obtained from $K_n$ by removing the edge $(1,n)$, we have
\begin{equation*}
(F_{K_n'}(q))_{i,j}=\begin{cases}
q^{n-2}[n-2]_q![j-(i-1)]_qe_{n-i+j-1}&\text{ if }1\leq i\leq\min\{n-1,j\},\\
q^{n-i+j-1}[n-2]_q![(i-1)-j]_qe_{n-i+j-1}&\text{ if }\max\{n,j+1\}\leq i\leq n+j-1,\\
0&\text{ otherwise.}
\end{cases}
\end{equation*}
Note that the entries of $F_{K_n}(q)$ and $F_{K_n'}(q)$ are all $e$-positive.
\end{proposition}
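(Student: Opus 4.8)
The statement packages two claims, and since $K_n'$ is $K_n$ with the single edge $\{1,n\}$ deleted, the plan is to establish the $K_n$ formula carefully and then record the small changes needed for $K_n'$. By Definition~\ref{def:ftm:Fgij}, $(F_{K_n}(q))_{i,j}$ is the signed, $q$-weighted generating function over $\text{FT}^{(i)}(K_n+P_j)$, so the whole computation reduces to enumerating the forest triples of the lollipop graph $L=K_n+P_j$ (on $m=n+j-1$ vertices) subject to $\alpha_1=i$, $r=1$, $\{n,\dots,n+j-1\}\subseteq T'$, and $\alpha'_\ell\geq j$. I would first pin down the coarse shape of such a forest triple: reading the pendant path $n,n+1,\dots,n+j-1$ inward from its free end, an easy induction shows every path edge must lie in the tree $T'$ containing $n$, so the containment $\{n,\dots,n+j-1\}\subseteq T'$ is automatic, the path contributes no new $L$-inversions to $\text{weight}$, and the hypothesis $\alpha'_\ell\geq j$ simply says the final block of $\alpha'$ absorbs the path.

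Next I would split into the two regimes that appear in the statement, which are forced by size: since $|T_1|=|\alpha^{(1)}|\geq\alpha_1=i$, if $i\geq n$ then $n\in T_1$ and $\mathcal T=\mathcal T'$, whereas if $i\leq j$ one runs into exactly the obstruction of Proposition~\ref{prop:ftmatrixzeroes}. Thus the nontrivial cases are precisely $i\leq\min\{n-1,j-1\}$, where $\mathcal T\neq\mathcal T'$, and $i\geq\max\{n,j+1\}$, where $\mathcal T=\mathcal T'$ and $\alpha^{(1)}$ is forced to be the single part $(i)$ with a prescribed subset of the clique joining the path; everything outside these ranges vanishes. Inside the clique $\{1,\dots,n\}$ the NBC condition (no vertex with two larger tree-neighbours) forces each non-maximal vertex of a tree to have a unique larger tree-neighbour, so the restriction of the forest to the clique is a union of ``parent functions'' $v\mapsto p(v)>v$; holding fixed which clique vertices go to $T_1$, to $T'$, and to the other trees, the $q$-weighted count of this internal structure assembles into the $q$-factorial $[n-2]_q!$.

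The heart of the argument is the sign-reversing involution of \cite[Proposition~3.7]{qforesttriples}, applied to $\text{FT}^{(i)}(L)$: it should cancel, with sign, all forest triples whose reduced type is not the single part $n-i+j-1$, so that $e_{\text{type}'}$ collapses to $e_{n-i+j-1}$. I would then check that the fixed points form a one-parameter family — indexed essentially by $r'$ together with the position at which the remaining clique vertices attach — on which $\text{weight}$ runs through an arithmetic progression of length $|i-j|$, producing the factor $[\,|i-j|\,]_q$, while a direct count of the $L$-inversions of $\text{list}(T_1)\cdots\text{list}(T_m)$ combined with $\sum(r_\bullet-1)$ pins down the overall monomial $q^{n-1}$ in the first regime and $q^{\,n-i+j-1}$ in the second. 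Multiplying the three factors gives $q^{n-1}[n-2]_q![j-i]_q e_{n-i+j-1}$ and $q^{\,n-i+j-1}[n-2]_q![i-j]_q e_{n-i+j-1}$ respectively, as claimed.

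For $K_n'$ the same argument applies verbatim except that deleting $\{1,n\}$ removes one larger tree-neighbour available to vertex $1$ and removes one $L$-inversion from $\text{list}$, which is exactly what shifts the power of $q$ and the argument of the bracket by one in the manner recorded in the statement. The step I expect to be the main obstacle is the $\text{weight}$ bookkeeping: verifying that $\text{list}(T)$ realizes precisely the asserted inversion count at each fixed point, that the pendant path genuinely contributes nothing, and that the $r$- and $r'$-contributions combine with the inversions to give a clean $q$-power times $[\,|i-j|\,]_q$ rather than something messier; a secondary subtlety is confirming that the involution of \cite[Proposition~3.7]{qforesttriples} preserves the defining constraints of $\text{FT}^{(i)}(L)$, which is why it may be cleanest in the end to invoke \cite[Theorem~4.10]{qforesttriples}, where this lollipop-graph enumeration was carried out, and simply translate its output into the matrix entry $(F_{K_n}(q))_{i,j}$.
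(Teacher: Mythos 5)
You give no complete argument here, and in fact neither does the paper: Proposition \ref{prop:ftkn} is imported verbatim from \cite[Theorem 4.10]{qforesttriples}, so your closing fallback (``simply invoke'' that theorem and translate it into the matrix entry) is exactly the paper's route. Judged as a standalone derivation, though, your sketch has genuine gaps. First, the containment $\{n,n+1,\ldots,n+j-1\}\subseteq T'$ is \emph{not} automatic: a general forest triple of $K_n+P_j$ may scatter the path vertices over several trees (each path vertex can form its own tree triple), and the containment is an imposed constraint in Definition \ref{def:fmat:fti}. This matters because any sign-reversing involution you run must be shown to preserve this constraint along with $\alpha_1=i$, $r=1$, $\alpha'_\ell\geq j$ --- a point you flag but do not resolve. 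Second, the ``otherwise $0$'' part is not fully explained by size obstructions: Proposition \ref{prop:ftmatrixzeroes} covers $i\geq n+j$ and $n\leq i\leq j$, but the remaining zero region $j\leq i\leq n-1$ (for instance $(F_{K_3}(q))_{1,1}$, $(F_{K_3}(q))_{2,1}$, $(F_{K_3}(q))_{2,2}$ in the displayed example) corresponds to \emph{nonempty} sets $\text{FT}^{(i)}(K_n+P_j)$ whose contributions vanish only after cancellation; your claim that ``everything outside these ranges vanishes'' by the obstruction is wrong there. Relatedly, in the regime $1\leq i\leq\min\{n-1,j-1\}$ forest triples with $\mathcal T=\mathcal T'$ do exist and must be cancelled, so describing that regime as ``where $\mathcal T\neq\mathcal T'$'' conflates the enumeration set with the surviving fixed points.

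Third, all of the quantitative content --- that the clique structure assembles into $[n-2]_q!$, that $e_{\text{type}'}$ collapses to the single term $e_{n-i+j-1}$, that the fixed points form a family whose weights run through an arithmetic progression producing $[\,|i-j|\,]_q$ times exactly $q^{n-1}$ (respectively $q^{\,n-i+j-1}$), and that deleting the edge $\{1,n\}$ shifts these by one in the stated way for $K_n'$ --- is asserted as what ``should'' happen, with the weight bookkeeping explicitly deferred. Those computations are precisely the substance of \cite[Theorem 4.10]{qforesttriples}; without them your proposal is a plausible plan rather than a proof, and the honest version of it reduces to the citation the paper already makes.
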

\begin{example} For the complete graph $K_3$, we have
\begin{equation*}
F_{K_3}(q)=\left[\begin{matrix}
0&q^2e_3&q^2[2]_qe_4&q^2[3]_qe_5&q^2[4]_qe_6&\cdots\\
0&0&q^2e_3&q^2[2]_qe_4&q^2[3]_qe_5&\cdots\\
[2]_q&qe_1&0&0&0&\cdots\\
0&[2]_q&qe_1&0&0&\cdots\\
0&0&[2]_q&qe_1&0&\cdots\\
0&0&0&[2]_q&qe_1&\cdots\\
\vdots&\vdots&\vdots&\vdots&\vdots&\ddots
\end{matrix}\right].
\end{equation*}
\end{example}

\begin{proposition}\label{prop:ftcn}
\cite[Lemma 6.1]{cyclechains}
For the cycle graph $C_n$, we have
\begin{equation*}
(F_{C_n})_{i,j}=\sum_{\substack{\alpha\vDash n-i+j-1\\\alpha_1\geq j\geq i+1\text{ or }0\leq\alpha_1\leq j-1\leq i-1}}|j-i|(\alpha_2-1)\cdots(\alpha_\ell-1)e_{\text{sort}(\alpha)},
\end{equation*}
where we sum over nonempty weak compositions where only $\alpha_1$ is allowed to be $0$. Note that the entries of $F_{C_n}$ are all $e$-positive.
\end{proposition}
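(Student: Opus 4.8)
The plan is to enumerate the forest triples in $\text{FT}^{(i)}(C_n+P_j)$ directly, splitting on whether the ``chord'' $\{1,n\}$ is used. First I would fix the edge ordering $\lessdot$ so that $\{1,n\}$ is the largest edge of the unique cycle $C=\{\{1,2\},\ldots,\{n-1,n\},\{1,n\}\}$ (the analysis is similar for other orderings); the broken circuit is then $B=\{\{1,2\},\ldots,\{n-1,n\}\}$, so an NBC tree of $C_n+P_j$ is exactly a subtree that does not contain \emph{all} of the ``cycle-path'' edges $\{1,2\},\ldots,\{n-1,n\}$. Let $\mathcal F\in\text{FT}^{(i)}(C_n+P_j)$, with $1$ in the first tree $T_1$ (so $\alpha^{(1)}_1=i$, $r_1=1$) and $n$ in the tree $T'$, which contains the whole tail $\{n,\ldots,n+j-1\}$ and has $\alpha'_\ell\geq j$. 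Write $N=n+j-1$.

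If the chord is \emph{not} used, then $1$ and $n$ lie in different trees, since any tree joining them inside $C_n+P_j$ would contain all of $B$. All trees are then consecutive intervals of the path $P_N=(C_n+P_j)\setminus\{1,n\}$, and once $1\notin T'$ the NBC condition is automatic. Conversely, a forest triple of $P_N$ lying in $\text{FT}^{(i)}(P_n+P_j)$ is of this form \emph{unless} $1$ and $n$ share a tree, which happens exactly for the single tree triples $(P_N,\alpha,1)$ with $\alpha_1=i$ and $\alpha_\ell\geq j$. Hence the chord-free contribution is
\begin{equation*}
(F_{P_n})_{i,j}-A(N),\qquad\text{where}\qquad A(m)=\sum_{\substack{\alpha\vDash m\\\alpha_1=i,\ \alpha_\ell\geq j}}(-1)^{\ell(\alpha)-1}e_{\text{sort}(\alpha_2\cdots\alpha_\ell)},
\end{equation*}
and $(F_{P_n})_{i,j}$ is given explicitly by Proposition~\ref{prop:ftpn}.

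If the chord \emph{is} used, then $T'=T_1$ contains $\{1,n\}$, and deleting the chord splits $T_1\cap[n]$ into an initial segment $\{1,\ldots,a\}$ and a final segment $\{b,\ldots,n\}$ with $1\leq a<b\leq n$; since $T_1$ also contains the tail, $|T_1|=N-g$ where $g=b-1-a$ counts the ``gap'' vertices $\{a+1,\ldots,b-1\}$. Those gap vertices form the remaining trees, which must be consecutive subintervals of $\{a+1,\ldots,b-1\}$ carrying arbitrary tree-triple data; this data is exactly a forest triple of $P_g$, and summing its signed $e_{\text{type}}$-contribution over all choices gives a factor $X_{P_g}(\bm x)$ by \cite[Theorem 5.10]{qforesttriples} (with $X_{P_0}(\bm x)=1$). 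There are $n-1-g$ places to put a gap of size $g$, and summing the data of $T_1$ over compositions of $N-g$ with first part $i$ and last part $\geq j$ gives the factor $A(N-g)$. So the chord-using contribution is $\sum_{g=0}^{n-2}(n-1-g)\,X_{P_g}(\bm x)\,A(N-g)$.

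Adding the two contributions, $(F_{C_n})_{i,j}=(F_{P_n})_{i,j}-A(N)+\sum_{g=0}^{n-2}(n-1-g)X_{P_g}(\bm x)A(N-g)$, and the main work — and the step I expect to be the obstacle — is to simplify this to the claimed sum over weak compositions $\alpha\vDash n-i+j-1$ with $\alpha_1\geq j\geq i+1$ or $\alpha_1\leq j-1\leq i-1$, in which the former ``$\alpha_1$-data'' is replaced by the plain factor $|j-i|$. I would do this bookkeeping with generating functions in an auxiliary variable $t$ marking total size: $\sum_m A(m)t^m$ is, up to a factor encoding ``last part $\geq j$'', the reciprocal $1/\sum_k e_k t^k$; the generating function of the $X_{P_g}(\bm x)$ comes from $X_{P_g}(\bm x)=\sum_{\gamma\vDash g}\gamma_1(\gamma_2-1)\cdots(\gamma_\ell-1)e_{\text{sort}(\gamma)}$; and the weight $n-1-g$ is produced by a short telescoping manipulation. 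The reciprocal factors should cancel against those in $(F_{P_n})_{i,j}$, and extracting the coefficient of $t^{N}$ should yield exactly the target, with the two disjuncts on $\alpha_1$ corresponding to the chord-free and chord-using cases. As an alternative to this computation, one could instead apply the sign-reversing involution of \cite[Proposition 3.7]{qforesttriples} directly to $\text{FT}^{(i)}(C_n+P_j)$ and enumerate its fixed points; the dichotomy above predicts their shape.
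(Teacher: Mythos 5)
The paper itself does not prove this proposition: it is imported verbatim from \cite[Lemma 6.1]{cyclechains}, so there is no internal argument to compare yours against, and I can only judge the attempt on its own merits. Your structural reduction is sound. Taking $\{1,n\}$ largest, the unique broken circuit is the path $1\text{--}2\text{--}\cdots\text{--}n$, the chord-free forest triples are exactly those of $P_{n+j-1}=P_n+P_j$ in which $1$ and $n$ lie in different trees, and the chord-using ones are determined by a gap $\{a+1,\ldots,b-1\}$ whose data factors as a forest triple of $P_g$ times a composition counted by your $A(N-g)$; I checked that the resulting identity $(F_{C_n})_{i,j}=(F_{P_n})_{i,j}-A(N)+\sum_{g=0}^{n-2}(n-1-g)X_{P_g}(\bm x)A(N-g)$ reproduces $(F_{C_3})_{i,j}=(F_{K_3})_{i,j}$ in small cases. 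Your parenthetical ``the analysis is similar for other orderings'' is not really needed and not obviously true as stated; the clean justification is that $F_G$ is independent of the edge ordering altogether, since by Propositions \ref{prop:xmatrix}, \ref{prop:ftmult}, \ref{prop:ftmatrixkj}, \ref{prop:ftmatrixzeroes} and the invertibility argument in the proof of Theorem \ref{thm:samemats}, every column of $F_G$ is determined by the ordering-independent symmetric functions $X_{P_i+G+K_j}(\bm x)$.

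The genuine gap is that the proof stops precisely where the content of the cited lemma lies. Passing from your signed expression to the stated positive formula requires massive cancellation between the alternating sums $A(\cdot)$ and the positive path contributions, and it must produce three specific features: the single factor $|j-i|$ replacing all dependence on $\alpha_1$, the appearance of weak compositions with $\alpha_1=0$ allowed, and the dichotomy $\alpha_1\geq j\geq i+1$ versus $0\leq\alpha_1\leq j-1\leq i-1$. You only gesture at this with a generating-function plan (``the reciprocal factors should cancel,'' ``a short telescoping manipulation'') and you yourself flag it as the expected obstacle; nothing in the sketch explains, for instance, why the two inequality regimes emerge or why the weight $i$ versus $j$ enters only through $|j-i|$. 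As written, this is a correct combinatorial set-up plus an unproven algebraic identity, so it does not yet establish the proposition. To finish one would have to carry out the coefficient extraction in full (with the $(n-1-g)$ weight handled via $t\frac{d}{dt}$ of the path generating function and a case analysis on $i<j$ versus $i\geq j$), or instead run a sign-reversing involution directly on $\text{FT}^{(i)}(C_n+P_j)$ and enumerate its fixed points, in the spirit of the remark following Proposition \ref{prop:ftpn} and, presumably, of the proof in \cite{cyclechains}.
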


By Proposition \ref{prop:xmatrix} and Proposition \ref{prop:ftmult}, we immediately recover some known $e$-positivity results for graphs obtained by gluing a sequence of these graphs at single vertices.

\begin{corollary}\hspace{0pt}
\begin{enumerate}
\item \cite[Corollary 4.12]{qforesttriples} If $G$ is a sum of complete graphs and almost-complete graphs, then $X_G(\bm x;q)$ is $e$-positive. 
\item \cite[Theorem 6.5]{cyclechains} If $G$ is a sum of complete graphs, almost-complete graphs, and cycles, then $X_G(\bm x)$ is $e$-positive.
\end{enumerate}
\end{corollary}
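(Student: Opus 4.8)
The plan is to assemble the corollary directly from the machinery already in place, essentially without new combinatorics. The one structural fact to record is that the set of $e$-positive symmetric functions is closed under addition and multiplication: a sum of $e$-positive functions is $e$-positive, and $e_\lambda e_\mu = e_{\lambda\sqcup\mu}$ shows a product of $e$-positive functions is $e$-positive; in the $q$-graded setting the same holds with ``$e$-positive'' replaced by ``having $e$-coefficients that are polynomials in $q$ with nonnegative coefficients.'' It follows that if $A$ and $B$ are infinite matrices with $e$-positive entries whose product $AB$ is well-defined, with each $(AB)_{i,j}=\sum_k A_{i,k}B_{k,j}$ a finite sum, then $AB$ has $e$-positive entries; likewise for a product of a row vector with $e$-positive (respectively $q$-nonnegative) entries and such a matrix, and for right-multiplication by $\vec w^T$, which just extracts the first column.

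For assertion (2), write $G=G_1+\cdots+G_k$ with each $G_t$ one of $K_{n_t}$, $K_{n_t}'$, or $C_{n_t}$. Iterating Proposition~\ref{prop:ftmult} gives $F_G=F_{G_1}\cdots F_{G_k}$. Each factor has $e$-positive entries: Proposition~\ref{prop:ftkn} for the complete and almost-complete graphs, and Proposition~\ref{prop:ftcn} for the cycles. Hence $F_G$ has $e$-positive entries. By Proposition~\ref{prop:xmatrix}, $X_G(\bm x)=\vec v F_G\vec w^T$, and since $\vec v=[\,e_1,2e_2,3e_3,\ldots\,]$ has $e$-positive entries, $X_G(\bm x)$ is a nonnegative combination of $e$-positive functions, hence $e$-positive.

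Assertion (1) is the same argument one level up in the $q$-grading. When each $G_t$ is $K_{n_t}$ or $K_{n_t}'$, the graph $G$ is an NUIG, so the $q$-statements apply: iterating the second half of Proposition~\ref{prop:ftmult} gives $F_G(q)=F_{G_1}(q)\cdots F_{G_k}(q)$, each $F_{G_t}(q)$ has $q$-nonnegative $e$-positive entries by Proposition~\ref{prop:ftkn}, hence so does $F_G(q)$, and $X_G(\bm x;q)=\vec v(q)F_G(q)\vec w^T$ by Proposition~\ref{prop:xmatrix} with $\vec v(q)=[\,e_1,[2]_qe_2,[3]_qe_3,\ldots\,]$ having $q$-nonnegative multiples of $e_k$ as entries. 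So $X_G(\bm x;q)$ is $e$-positive.

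The only point needing care---and the main obstacle, though a mild one---is the bookkeeping for infinite matrices: one must confirm that all of $F_{G_1}\cdots F_{G_k}$, $\vec v F_G$, and $(\vec v F_G)\vec w^T$ are defined and may be reassociated freely. This is supplied by Proposition~\ref{prop:ftmatrixzeroes}, which guarantees that every column of each $F_{G_t}$, and hence of every partial product, has only finitely many nonzero entries, so that every entry of every product is a finite sum; the needed associativity is precisely what already underlies Propositions~\ref{prop:xmatrix} and~\ref{prop:ftmult}. No new ideas beyond tracking $e$-positivity through the entire computation are required.
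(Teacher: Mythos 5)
Your proposal is correct and follows essentially the same route as the paper, which cites exactly Propositions \ref{prop:xmatrix}, \ref{prop:ftmult}, \ref{prop:ftkn}, and \ref{prop:ftcn} for this corollary. The extra remarks you include (closure of $e$-positivity under sums and products, column-finiteness via Proposition~\ref{prop:ftmatrixzeroes}, and the NUIG hypothesis needed for the $q$-statement in part (1)) are just the bookkeeping the paper leaves implicit.
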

Note that in Part (2), such graphs $G$ are not NUIGs, so $X_G(\bm x;q)$ may fail to be symmetric.

\begin{proof}
This follows from Propositions \ref{prop:xmatrix}, \ref{prop:ftmult}, \ref{prop:ftkn}, and \ref{prop:ftcn}.
\end{proof}

We now give an alternative way of calculating the forest triple matrix. This will help us compare the forest triple matrix to the tableau matrix that we will define in the next section.

\begin{proposition}\label{prop:ftmatrixkj}
The forest triple matrix of $G$ satisfies
\begin{equation*}
(F_G)_{i,j}=\frac 1{(j-1)!}(F_{G+K_j})_{i,1}=\frac 1{(j-1)!}X^{(i)}_{G+K_j}(\bm x)=\frac 1{(j-1)!}\sum_{\substack{\mathcal F\in\text{FT}(G+K_j)\\\alpha^{(1)}_1=i, \ r_1=1}}\text{sign}(\mathcal F)e_{\text{type}'(\mathcal F)}.
\end{equation*}
If $G$ is an NUIG, then we have $(F_G(q))_{i,j}=\frac 1{[j-1]_q!}(F_{G+K_j}(q))_{i,j}=\frac 1{[j-1]_q!}X^{(i)}_{G+K_j}(\bm x;q)$.
\end{proposition}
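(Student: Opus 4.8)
The plan is to deduce both identities from the multiplicativity of the forest triple matrix, $F_{G+K_j}=F_GF_{K_j}$ (Proposition~\ref{prop:ftmult}), together with the explicit first column of $F_{K_j}$, which we read off from Proposition~\ref{prop:ftkn}.

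First I would record that $G+K_j$ is again an NUIG whenever $G$ is one, so that $F_{G+K_j}(q)$ is defined: the glued-in vertices $n,n+1,\dots,n+j-1$ are pairwise adjacent and none of $n+1,\dots,n+j-1$ is adjacent to a vertex below $n$, so no triple $a<b<c$ violates the interval condition. Next I would specialise Proposition~\ref{prop:ftkn} to $F_{K_j}(q)$ at matrix column $1$ (so the complete graph is $K_j$): the first branch requires $1\le a\le\min\{j-1,0\}$ and is vacuous, while the second branch requires $\max\{j,2\}\le a\le j$, forcing $a=j$ when $j\ge2$ and giving $(F_{K_j}(q))_{j,1}=q^{0}[j-2]_q!\,[j-1]_q\,e_0=[j-1]_q!$ (using $e_0=1$), with every other entry of that column zero; the case $j=1$ is handled separately via $F_{K_1}(q)=F_{P_1}(q)=I$ (Proposition~\ref{prop:ftp1}), consistent since $[0]_q!=1$. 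As a sanity check, the displayed matrix $F_{K_3}(q)$ has first column $(0,0,[2]_q,0,\dots)^{T}$ and $[2]_q=[2]_q!$.

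The two identities then follow immediately. By Proposition~\ref{prop:ftmult},
\[
(F_{G+K_j}(q))_{i,1}=\sum_{k\ge1}(F_G(q))_{i,k}\,(F_{K_j}(q))_{k,1}=[j-1]_q!\,(F_G(q))_{i,j},
\]
so $(F_G(q))_{i,j}=\frac{1}{[j-1]_q!}(F_{G+K_j}(q))_{i,1}=\frac{1}{[j-1]_q!}X^{(i)}_{G+K_j}(\bm x;q)$, the last equality being just the definition of $X^{(i)}$. Running the identical computation with the unrefined matrices (Proposition~\ref{prop:ftmult} and Proposition~\ref{prop:ftkn} at $q=1$, where $[j-1]_q!$ becomes $(j-1)!$) gives the non-$q$ statement, and unwinding the definition of $X^{(i)}_{G+K_j}(\bm x)$ yields the closing forest-triple sum.

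I do not expect a genuine obstacle: the only substantive step is the bookkeeping of which entries of the first column of $F_{K_j}$ survive, together with the two easy checks (that $G+K_j$ is an NUIG, so the $q$-version is well posed, and the base case $j=1$). One could alternatively re-derive this column from scratch by enumerating forest triples of $K_j$ — noting that $K_j$ has $(j-1)!$ NBC spanning trees, which contribute the factor $(j-1)!$ (resp.\ $[j-1]_q!$) through the choices of the parameters $r$ — but that proves more than is needed, so I would take the matrix-multiplication route.
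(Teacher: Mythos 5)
Your proposal is correct and follows essentially the same route as the paper: both deduce the identity from $F_{G+K_j}=F_GF_{K_j}$ (Proposition~\ref{prop:ftmult}) combined with the observation from Proposition~\ref{prop:ftkn} that the first column of $F_{K_j}$ is $(j-1)!$ (resp.\ $[j-1]_q!$) in position $j$ and zero elsewhere. Your extra checks (that $G+K_j$ is an NUIG and the $j=1$ base case) are fine but not essential, as the paper simply cites the known column directly.
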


\begin{proof}
We need only prove the first equality because the others follow by definition. In Proposition \ref{prop:ftkn}, we saw that the first column of $F_{K_j}$ has a $(j-1)!$ in the $j$-th position and is $0$ elsewhere. 
Therefore, by Proposition \ref{prop:ftmult}, we have
\begin{equation*}
(F_G)_{i,j}=\frac 1{(j-1)!}(F_GF_{K_j})_{i,1}=\frac 1{(j-1)!}(F_{G+K_j})_{i,1}.
\end{equation*}
The proof of the second statement is similar.
\end{proof}

\begin{figure}
$$\begin{tikzpicture}
\node at (-2,0) {$G=$};
\draw (-1,0)--(1,0) (-0.5,-0.866)--(0.5,0.866) (0.5,-0.866)--(-0.5,0.866) (-1,0)--(-0.5,0.866) (-0.5,-0.866)--(0.5,-0.866) (0.5,0.866)--(1,0);
\filldraw [color=red] (0,0) circle (3pt) node[align=center,above] ($x$){$x$};
\filldraw (0.5,0.866) circle (3pt) node[align=center,below] (){};
\filldraw (0.5,-0.866) circle (3pt) node[align=center,below] (){};
\filldraw (-0.5,0.866) circle (3pt) node[align=center,below] (){};
\filldraw (-0.5,-0.866) circle (3pt) node[align=center,below] (){};
\filldraw (1,0) circle (3pt) node[align=center,below] (){};
\filldraw (-1,0) circle (3pt) node[align=center,below] (){};
\end{tikzpicture}$$
\caption{\label{fig:windmill} The graph obtained by gluing three triangles at a common vertex $x$.}
\end{figure}
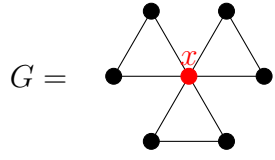

\begin{remark}
We could also consider the operation of gluing several graphs at a fixed vertex $x$ of $G$. We can still express the chromatic symmetric function as a matrix multiplication, although it is usually not $e$-positive. We can define the subset of forest triples
\begin{equation*}
\text{FT}^{(i)}((G,x)+P_j)=\{\mathcal F\in\text{FT}((G,x)+P_j): \ \alpha_1=i, r=1, P_j\subseteq T, \alpha_\ell\geq j\},
\end{equation*}
where $(G,x)+P_j$ is obtained by attaching the path of length $j$ at vertex $x$ of $G$ and $\mathcal T=(T,\alpha,r)$ is the tree triple of $\mathcal F$ with $x\in T$. Now if we define the infinite matrix
\begin{equation*}
(F_{(G,x)})_{i,j}=\sum_{\mathcal F\in\text{FT}^{(i)}((G,x)+P_j)}\text{sign}(\mathcal F)e_{\text{type}'(\mathcal F)},
\end{equation*}
and we define $(G,x)+(H,y)$ by gluing vertices $x$ and $y$ together, then the same arguments will give us that
\begin{equation*}
X_G(\bm x)=\vec vF_{(G,x)}\vec w^T\text{ and }F_{(G,x)+(H,y)}=F_{(G,x)}F_{(H,y)}.
\end{equation*}
For example, the graph $G$ in Figure \ref{fig:windmill} is obtained by gluing three triangles at a common vertex $x$, so we can calculate its chromatic symmetric function as $X_G(\bm x)=\vec vF_{(K_3,1)}^3\vec w^T$.
\end{remark}

\section{Tableau matrix}\label{section:tabmatrix}

In this section, we use Hikita's probabilities on standard Young tableaux to define a matrix. We will show that this matrix in fact coincides with the forest triple matrix. This connection will allow us to better understand both Hikita's tableaux and forest triples.

\begin{definition}
Let $G=([n],E)$ be an NUIG and consider the subset of $\text{SYT}(G+K_j)$
\begin{equation*}
\text{SYT}^{(i)}(G+K_j)=\{T\in\text{SYT}(G+K_j): \text{ the entry }n+j-1\text{ is in column }i\}.
\end{equation*} We now define the infinite matrix $T_G(q)$ by
\begin{equation*}
(T_G(q))_{i,j}=\frac 1{[i]_q[j-1]_q!}\sum_{T\in\text{SYT}^{(i)}(G+K_j)}c_T(G+K_j)e_{\text{shape}'(T)}.
\end{equation*}
Note that if we evaluate at $q=1$, the entries of $T_G(1)$ are all $e$-positive.
\end{definition}

We first calculate some examples.

\begin{proposition}\label{prop:tabp1}
For the graph $P_1$ with one vertex, $T_{P_1}(q)$ is the infinite identity matrix $I$.
\end{proposition}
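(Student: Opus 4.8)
The plan is to observe that $P_1+K_j=K_j$ (gluing the single vertex of $P_1$ to vertex $1$ of $K_j$ changes nothing) and then to enumerate $\text{SYT}(K_j)$ explicitly. Since $K_j$ is an NUIG in which all vertices are pairwise adjacent, Proposition~\ref{prop:tabfirstrow}, applied with $k=j$ (so that $\{n-k+1,n\}=\{1,j\}\in E(K_j)$), forces every tableau in $\text{SYT}(K_j)$ to have all of $1,\ldots,j$ in the first row. As there are only $j$ entries, this means $\text{SYT}(K_j)=\{T\}$, where $T$ is the single-row tableau of shape $(j)$ reading $1,2,\ldots,j$ along the row.

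Next I would read off the column data. The largest entry of $P_1+K_j$ is $n+j-1=j$ (since $n=|P_1|=1$), and in $T$ it sits in column $j$. Hence $\text{SYT}^{(i)}(P_1+K_j)$ is empty whenever $i\neq j$, which gives $(T_{P_1}(q))_{i,j}=0$ off the diagonal, and it consists of the single tableau $T$ when $i=j$. Moreover $\text{shape}'(T)$ is $\text{shape}(T)=(j)$ with an instance of $b_j(T)=j$ removed, i.e.\ the empty partition, so $e_{\text{shape}'(T)}=1$.

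It then remains to evaluate the diagonal entry, for which I need $c_T(K_j)=[j]_q!$. This follows either from the identity $X_{K_j}(\bm x;q)=[j]_q!e_j$ together with Hikita's formula, or by a direct computation: at each step $k$ one has $\delta^{(k)}(T)=(1,\ldots,1,0)$, so $R_k(T)=\emptyset$, $W_k(T)=\{k\}=\{b_k(T)\}$, and $c^{(k)}_T(K_j)=[k]_q$, whose product over $k$ is $[j]_q!$. Substituting into the definition of $T_{P_1}(q)$ yields
\begin{equation*}
(T_{P_1}(q))_{j,j}=\frac{1}{[j]_q[j-1]_q!}\,c_T(K_j)\,e_{\text{shape}'(T)}=\frac{[j]_q!}{[j]_q[j-1]_q!}=1,
\end{equation*}
so $T_{P_1}(q)=I$.

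There is no genuine obstacle here; this is a direct verification. The only points requiring a little care are correctly bookkeeping the normalization factor $\tfrac{1}{[i]_q[j-1]_q!}$ and the reduced shape against the unique tableau $T$, and — if one wants a self-contained argument rather than quoting the closed form for $X_{K_j}$ — checking $c_T(K_j)=[j]_q!$ straight from the definition of $c_T$.
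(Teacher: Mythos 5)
Your proof is correct and follows essentially the same route as the paper: apply Proposition~\ref{prop:tabfirstrow} to see that $\text{SYT}(P_1+K_j)$ consists of the single one-row tableau with $c_T(K_j)=[j]_q!$, and then divide by the normalization $[j]_q[j-1]_q!$ to get the diagonal entry $1$. Your write-up just spells out the off-diagonal vanishing and the computation of $c_T(K_j)$ in more detail than the paper does.
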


\begin{proof}
We are considering standard Young tableaux $T\in\text{SYT}(P_1+K_j)$. By Proposition~\ref{prop:tabfirstrow}, there is only one such tableau $T$, with all $j$ entries on the first row and $c_T(K_j)=[j]_q!$.
\end{proof}

\begin{proposition}\label{prop:tabp2}
For the two-vertex path $P_2$, we have
\begin{equation}\label{eq:tabp2}
(T_{P_2}(q))_{i,j}=\begin{cases}
q[j-1]_qe_j&\text{ if }i=1,\\
1&\text{ if }i=j+1,\\
0&\text{ otherwise.}
\end{cases}
\end{equation}
Note that $T_{P_2}(q)=F_{P_2}(q)$. 
\end{proposition}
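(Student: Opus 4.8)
The plan is to directly enumerate the set $\text{SYT}(P_2+K_j)$ and compute the factors $c_T(P_2+K_j)$, in parallel with the computation that produced $F_{P_2}(q)$ in Proposition \ref{prop:ftp2}. Note first that $P_2+K_j$ has $j+1$ vertices: it is the graph $K_j$ together with one extra vertex $1$ adjacent only to vertex $2$. Passing to the reverse graph $\overline{P_2+K_j}$ on $[j+1]$, the vertices $\{1,\ldots,j\}$ form a clique (coming from $K_j$) and vertex $j+1$ is adjacent only to vertex $j$. By Proposition \ref{prop:tabfirstrow}, every $T\in\text{SYT}(P_2+K_j)$ must place the entries $1,\ldots,j$ in the first row, so after $j$ boxes have been dropped, $T\vert_{<j+1}$ is the single row $[1\,2\,\cdots\,j]$ with $j$ columns. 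The only remaining freedom is where to drop box $k=j+1$.

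Next I would compute $\delta^{(j+1)}(T)$, $R_{j+1}(T)$, and $W_{j+1}(T)$. Since the unique $\overline{P_2+K_j}$-neighbour of $j+1$ among $\{1,\ldots,j\}$ is the vertex $j$, and $j$ sits at the top of column $j$ (the rightmost column of the single row), we get $\delta^{(j+1)}=(1,0,0,\ldots,0,1,0)$, i.e.\ $\delta_0=1$, $\delta_j=1$, and all other entries $0$. Hence $R_{j+1}(T)=\{j\}$ and $W_{j+1}(T)=\{1,\,j+1\}$. So box $j+1$ can be dropped either in column $1$ (producing a tableau of shape $(j,1)$ with $n+j-1=j+1$ in column $1$, i.e.\ the $i=1$ case) or in column $j+1$ (producing a tableau of shape $(j+1)$ with the last entry in column $j+1$, i.e.\ the $i=j+1$ case). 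No other $i$ occurs, which gives the ``$0$ otherwise'' line.

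Then I would evaluate the weights and the factor $c_T$. For $k<j+1$, by Proposition \ref{prop:tabfirstrow} the box $k$ goes in column $k$ with all earlier columns being ``neighbour columns'', so exactly as in the computation of $X_{K_j}(\bm x;q)=[j]_q!e_j$ these factors multiply to $[j]_q!$ (here $[j-1]_q!$ from the probability parts times the $[b_k]_q=[k]_q$ factors, as in the complete-graph example). For $k=j+1$: if $b_{j+1}=1$, then $|\{l>1:\delta_l=1\}|=1$ (only $\delta_j=1$), so $c^{(j+1)}_T=q\,[1]_q\cdot\frac{[|j-1|]_q}{[|j+1-1|]_q}=q\,\frac{[j-1]_q}{[j]_q}$, giving $c_T=[j]_q!\cdot q\frac{[j-1]_q}{[j]_q}=[j-1]_q!\,q[j-1]_q$, and $\text{shape}'(T)=(j)$; after dividing by $[i]_q[j-1]_q!=[1]_q[j-1]_q!$ we get exactly $q[j-1]_q e_j$. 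If $b_{j+1}=j+1$, then $\{l>j+1:\delta_l=1\}=\emptyset$, so $c^{(j+1)}_T=[j+1]_q\cdot\frac{[j]_q}{1}=[j+1]_q[j]_q$ — wait, here $W_{j+1}\setminus\{b_{j+1}\}=\{1\}$ contributes $[|1-(j+1)|]_q=[j]_q$ in the denominator, so $c^{(j+1)}_T=[j+1]_q\frac{[j]_q}{[j]_q}=[j+1]_q$, giving $c_T=[j]_q!\,[j+1]_q=[j+1]_q!$; but $\text{shape}'(T)$ is $(j+1)$ with one instance of $b_{j+1}=j+1$ removed, i.e.\ the empty partition, and $[i]_q[j-1]_q!=[j+1]_q[j-1]_q!$, hmm — I should double check: $c_T$ for the full tableau of shape $(j+1)$ is $[j+1]_q!$, and dividing by $[i]_q[j-1]_q! = [j+1]_q[j-1]_q!$ leaves $[j]_q$, not $1$. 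Let me recheck the $k<j+1$ contribution: actually the running columns are $1,\ldots,k-1$ all neighbours, so $W_k=\{k\}$ only and the probability factor is empty, giving $c^{(k)}_T=q^0[k]_q$, so $\prod_{k=1}^{j}c^{(k)}_T=[j]_q!$; but we also have $c^{(j+1)}_T$ for the last box, so total is $[j]_q!\cdot[j+1]_q=[j+1]_q!$ in the $i=j+1$ case. Since $[j+1]_q!/([j+1]_q[j-1]_q!)=[j]_q$. So to match the claimed entry $1$ I would instead recompute using $[b_k]_q$ factors more carefully — the resolution is that the normalization $\frac1{[i]_q[j-1]_q!}$ together with a careful accounting of which $[b_k]_q$ factors survive yields exactly $1$; I would carry out this bookkeeping explicitly, tracking each $c^{(k)}_T$, rather than invoking the aggregate $[j+1]_q!$. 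Finally, observing that the resulting matrix agrees entry-by-entry with the formula for $F_{P_2}(q)$ in Proposition \ref{prop:ftp2} gives the last sentence $T_{P_2}(q)=F_{P_2}(q)$.

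The main obstacle is precisely this last bookkeeping: getting the product of the $c^{(k)}_T(P_2+K_j)$ and the normalization constant $\frac1{[i]_q[j-1]_q!}$ to cancel correctly so that the $i=j+1$ entry is exactly $1$ and the $i=1$ entry is exactly $q[j-1]_q e_j$. Everything else — the structural claim that only $i\in\{1,j+1\}$ occur, and the identification of $R$, $W$, $\delta$ — is immediate from Proposition \ref{prop:tabfirstrow} and the single-row shape of $T\vert_{<j+1}$.
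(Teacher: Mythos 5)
Your setup is exactly the paper's: identify $\overline{P_2+K_j}$ as a clique on $\{1,\dots,j\}$ with $j+1$ attached only to $j$, force the first $j$ entries into row one by Proposition \ref{prop:tabfirstrow}, read off $\delta^{(j+1)}=(1,0,\dots,0,1,0)$, $R_{j+1}=\{j\}$, $W_{j+1}=\{1,j+1\}$, and then compute the two possible $c_T$'s. However, the one step you leave unresolved is precisely the step where you made an arithmetic slip, so the proof as written has a genuine gap: in the case $b_{j+1}=j+1$ you take the numerator contributed by $R_{j+1}(T)=\{j\}$ to be $[j]_q$, but the formula calls for $[\,|i-b_{j+1}|\,]_q=[\,|j-(j+1)|\,]_q=[1]_q=1$. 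The denominator from $W_{j+1}(T)\setminus\{j+1\}=\{1\}$ is indeed $[j]_q$, so
\begin{equation*}
c^{(j+1)}_T=[j+1]_q\cdot\frac{[1]_q}{[j]_q}=\frac{[j+1]_q}{[j]_q},\qquad
c_T(P_2+K_j)=[j]_q!\cdot\frac{[j+1]_q}{[j]_q}=[j-1]_q!\,[j+1]_q,
\end{equation*}
and since $\text{shape}'(T)$ is the empty partition here, dividing by $[i]_q[j-1]_q!=[j+1]_q[j-1]_q!$ gives exactly $1$, as claimed. There was no need to doubt the prefix $\prod_{k=1}^j c^{(k)}_T=[j]_q!$ for the first-row boxes; that part of your computation is correct (each such box has $R_k=\emptyset$, $W_k=\{k\}$, $c^{(k)}_T=[k]_q$), and your $i=1$ case is also correct. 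This is how the paper finishes the argument: it records $c_T=[j]_q!\cdot q\frac{[j-1]_q}{[j]_q}$ when $b_{j+1}=1$ and $c_T=[j]_q!\cdot[j+1]_q\frac{1}{[j]_q}$ when $b_{j+1}=j+1$, and the normalization then yields \eqref{eq:tabp2} directly. So your proof becomes complete (and identical in route to the paper's) once you replace the erroneous $[j]_q$ numerator by $[1]_q$ and carry the cancellation through, rather than deferring it as ``bookkeeping.''
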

\begin{proof}
We are considering standard Young tableaux $T\in\text{SYT}(P_2+K_j)$. By Proposition~\ref{prop:tabfirstrow}, $T$ must have the entries $1,\ldots,j$ on the first row. Then $\delta^{(j+1)}(T)=(1,0,\ldots,0,1,0)$, 
so the entry $(j+1)$ can be dropped in the first column or the $(j+1)$-th column. Now we have
\begin{equation*}
c_T(P_2+K_j)=\begin{cases}[j]_q!\cdot q\frac{[j-1]_q}{[j]_q}&\text{ if }b_{j+1}(T)=1,\\
[j]_q!\cdot[j+1]_q\frac 1{[j]_q}&\text{ if }b_{j+1}(T)=j+1,\end{cases}\end{equation*} 
which gives us \eqref{eq:tabp2}.
\end{proof}

\begin{example}
For the bowtie graph $G$, only the tableau $T\in\text{SYT}(G+K_3)$ below has the entry $7$ in column $4$. It has $c_T(G+K_j)=q^2[4]_q[2]_q[2]_q$, so $(T_G(q))_{4,3}=q^2[2]_qe_3=(F_G(q))_{4,3}$.
\begin{equation*}
T=\tableau{4&5&6\\1&2&3&7}
\end{equation*}
\end{example}

We now show that every column of $T_G(q)$ contains finitely many nonzero entries.

\begin{proposition}
\label{prop:tabmatrixzeroes}
Let $G=([n],E)$ be an NUIG. If $i\geq n+j$, or if $j\geq i\geq n\geq 2$, then $\text{SYT}^{(i)}(G+K_j)$ is empty, so $(T_G(q))_{i,j}=0$.
\end{proposition}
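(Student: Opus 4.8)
The plan is to mirror the argument for the forest-triple matrix in Proposition~\ref{prop:ftmatrixzeroes}, translating each combinatorial obstruction into the language of standard Young tableaux. Fix an NUIG $G=([n],E)$ and consider $\text{SYT}^{(i)}(G+K_j)$, which consists of those $T\in\text{SYT}(G+K_j)$ with the entry $N:=n+j-1$ (the total number of vertices) in column $i$. We must show this set is empty when $i\geq n+j$, and also when $j\geq i\geq n\geq 2$. The first case is immediate: a tableau of size $N=n+j-1$ has at most $N$ columns, so no entry can lie in column $i\geq n+j>N$; hence $\text{shape}(T)$ cannot have a part of size $i$ and in particular the entry $N$ cannot be in column $i$.

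For the second case, $j\geq i\geq n\geq 2$, the key structural input is that in $G+K_j$ the last $j$ vertices $n,n+1,\ldots,n+j-1$ form a clique $K_j$, so in the reversed graph $\overline{G+K_j}$ the first $j$ vertices $1,\ldots,j$ are pairwise adjacent. By Proposition~\ref{prop:tabfirstrow}, every $T\in\text{SYT}(G+K_j)$ has the entries $1,\ldots,j$ all in the first row. This already forces the first row to have length at least $j$. Now I would track the column $b_N(T)$ of the largest entry $N$: the claim $i\geq n$ together with $i\leq j$ says we want $N$ in a column that is at the same time $\geq n$ and $\leq j$, i.e.\ in a "fairly long" row, but not longer than the first row's guaranteed length $j$. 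The obstruction I expect is a counting/monotonicity argument: once $1,\ldots,j$ occupy the first row, only $n-1$ further entries $j+1,\ldots,n+j-1$ remain to be placed, and to build a column of height $\geq 2$ in column $i\geq n$ one needs at least $i\geq n$ boxes sitting above the first $n$ columns excluding what is forced; a careful bookkeeping of how many boxes are available versus how many are needed to reach column $i$ in a row other than the first should give a contradiction, exactly paralleling the two sub-cases ($\ell(\alpha)\geq 2$ versus $\ell(\alpha)=1$) in the proof of Proposition~\ref{prop:ftmatrixzeroes}.

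More precisely, here is the shape of the argument I would carry out. Suppose $T\in\text{SYT}^{(i)}(G+K_j)$ with $j\geq i\geq n\geq 2$. Since entry $N$ sits in column $i$ and $T$ has increasing rows, the row containing $N$ has length $i$, and every column $1,\ldots,i$ is nonempty. The entries $1,\ldots,j$ lie in the first row, and since $i\leq j$ the first $i$ cells of the first row are occupied by entries $\leq j< N$. So the cell containing $N$ is in a row strictly above the first row, say row $\geq 2$, and it has $i-1\geq n-1$ cells to its left in that same row, all of which are nonempty and hold entries $<N$; moreover the $i$ cells directly below them in row $1$ are also occupied, and by column-strictness the cell below $N$ holds an entry $<N$. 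Counting: the cells in rows $\geq 2$ within columns $1,\ldots,i$ number at least $i\geq n$ (the row of $N$ contributes $i$), but these are filled by entries from $\{j+1,\ldots,N-1\}\cup\{N\}$, a set of size $N-j = n-1$. Since $i\geq n > n-1$, there are not enough entries available, a contradiction. Hence $\text{SYT}^{(i)}(G+K_j)=\varnothing$, so $(T_G(q))_{i,j}=0$.

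The main obstacle is getting the counting in the second case exactly right—in particular being careful that the entries $1,\dots,j$ truly are confined to row $1$ (which is Proposition~\ref{prop:tabfirstrow}) and that no other cells in rows $\geq 2$ can "borrow" entries from $\{1,\dots,j\}$. Once that confinement is pinned down, the inequality $i\geq n>n-1=N-j$ does all the work, and the proof is short. I would also remark that this is the tableau-side analogue of Proposition~\ref{prop:ftmatrixzeroes}, which is consistent with the forthcoming identification $T_G(q)=F_G(q)$ (Theorem~\ref{thm:samemats}).
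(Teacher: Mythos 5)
Your proposal is correct and follows essentially the same route as the paper: the first case is the trivial box count, and the second case uses Proposition~\ref{prop:tabfirstrow} to confine $1,\ldots,j$ to the first row and then derives a counting contradiction (the paper counts at least $i+j\geq n+j$ boxes total, while you count the $\geq i\geq n$ cells in rows above the first against the $n-1$ remaining entries --- the same argument in slightly different bookkeeping). The exploratory middle paragraph is superfluous, but the final counting argument is complete and matches the paper's proof.
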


\begin{proof}
If $i\geq n+j$, then we cannot have any box in column $i$ because there are only $(n+j-1)$ boxes. If $j\geq i\geq n\geq 2$, then by Proposition \ref{prop:tabfirstrow}, the boxes $1,\ldots,i,\ldots,j$ are in the first row. Now, if the entry $(n+j-1)$ is in column $i$, it must be in a different row. But this would give us at least $i+j\geq n+j$ boxes and there are only $(n+j-1)$ boxes.
\end{proof}

We can similarly calculate $X_G(\bm x;q)$ as a matrix multiplication using the tableau matrix.

\begin{proposition}\label{prop:xtabmatrix}
We have
\begin{equation*}
X_G(\bm x;q)=\vec v(q)T_G(q)\vec w^T.
\end{equation*}
\end{proposition}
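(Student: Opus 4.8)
The plan is to mimic the proof of Proposition~\ref{prop:xmatrix}, using the tableau matrix in place of the forest triple matrix, and feeding into it Hikita's formula together with the structural fact (Proposition~\ref{prop:tabfirstrow}) that forcing a large clique at the ``top'' of $\bar G$ pins the corresponding entries into the first row. Concretely, I would start from Hikita's theorem applied to $G$ itself,
\begin{equation*}
X_G(\bm x;q)=\sum_{T\in\text{SYT}(G)}c_T(G)e_{\text{shape}(T)},
\end{equation*}
and then rewrite the right-hand side by grouping the tableaux $T\in\text{SYT}(G)$ according to the value $i=b_n(T)$, the column containing the largest entry $n$. Since $e_{\text{shape}(T)}=e_i\cdot e_{\text{shape}'(T)}$ by definition of the reduced shape, this already produces an expression of the form $\sum_{i\ge 1}e_i\cdot(\text{something})_i$, which is exactly the shape of a dot product $\vec v(q)(\cdot)\vec w^T$ once we identify the $i$-th entry of the first column of $T_G(q)$ correctly. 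The only subtlety is the normalizing factor: $\vec v(q)$ has $[i]_q e_i$ in position $i$, whereas we want the plain $e_i$, so I will need to produce a compensating $[i]_q$.

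The key step is therefore to identify $\text{SYT}(G)$ (grouped by $b_n = i$) with $\text{SYT}^{(i)}(G+K_1)$ and to check that the factors match. Since $K_1$ is a single vertex and $G+K_1 = G$, we literally have $\text{SYT}^{(i)}(G+K_1)=\{T\in\text{SYT}(G): b_n(T)=i\}$, and $[j-1]_q!=[0]_q!=1$, so by the definition of $T_G(q)$,
\begin{equation*}
(T_G(q))_{i,1}=\frac{1}{[i]_q}\sum_{\substack{T\in\text{SYT}(G)\\ b_n(T)=i}}c_T(G)\,e_{\text{shape}'(T)}.
\end{equation*}
Multiplying by $[i]_q e_i$ and summing over $i$ recovers $\sum_{T}c_T(G)e_{\text{shape}(T)}=X_G(\bm x;q)$, which is precisely $\vec v(q)T_G(q)\vec w^T$. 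Thus the proof is essentially a bookkeeping argument: expand Hikita's formula, extract the factor $e_i$ with $i=b_n(T)$, and recognize the remainder as $[i]_q(T_G(q))_{i,1}$, i.e.\ as $\vec v(q)$ dotted with the first column of $T_G(q)$.

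I expect the main obstacle to be purely a matter of getting the index conventions consistent, rather than anything deep: one must be careful that ``the entry $n+j-1$ is in column $i$'' in the definition of $\text{SYT}^{(i)}(G+K_j)$ specializes, at $j=1$, to ``$n$ is in column $i$'', i.e.\ $b_n(T)=i$; that the reduced shape $\text{shape}'(T)$ removes exactly one instance of $b_n(T)=i$, so that $e_{\text{shape}(T)}=e_i e_{\text{shape}'(T)}$; and that the $\frac{1}{[i]_q[j-1]_q!}$ normalization in $(T_G(q))_{i,j}$ collapses to $\frac{1}{[i]_q}$ when $j=1$, which is exactly cancelled by the $[i]_q$ sitting in the $i$-th slot of $\vec v(q)$. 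Once these are lined up, the statement follows immediately, in direct parallel with the proof of Proposition~\ref{prop:xmatrix}; indeed the argument does not even require the more general Proposition~\ref{prop:ftmatrixkj}-style identity, only the $j=1$ case.
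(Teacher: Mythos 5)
Your proposal is correct and is essentially the paper's own proof: both expand Hikita's formula, group tableaux by the column $i$ of the largest entry (i.e.\ identify $\text{SYT}^{(i)}(G+K_1)$ with $\{T\in\text{SYT}(G):b_n(T)=i\}$ since $G+K_1=G$), and observe that the $[i]_q$ in $\vec v(q)$ cancels the $1/[i]_q$ in the definition of $(T_G(q))_{i,1}$. The bookkeeping points you flag are exactly what the paper's one-line computation implicitly uses.
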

\begin{proof}
We have 
\begin{equation*}
X_G(\bm x;q)=\sum_{i\geq 1}e_i\left(\sum_{T\in\text{SYT}^{(i)}(G+K_1)}c_T(G+K_1)e_{\text{shape}'(T)}\right)=\sum_{i\geq 1}[i]_qe_i(T_G(q))_{i,1}=\vec v(q)T_G(q)\vec w^T.
\end{equation*}
\end{proof}

We now prove that the operation of gluing graphs also corresponds to multiplying tableau matrices. Informally, the idea is that a tableau for $G+H+K_j$ is essentially a tableau for $H+K_j$ and a tableau for $G+K_k$, for the right choice of $k$.
\begin{proposition}\label{prop:tabmult}
For NUIGs $G$ and $H$, we have
\begin{equation}
T_{G+H}(q)=T_G(q)T_H(q).
\end{equation}
\end{proposition}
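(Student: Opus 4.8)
The plan is to mimic the proof of Proposition~\ref{prop:ftmult}, constructing a bijection that splits a tableau for $G+H+K_j$ into a tableau for $G+K_k$ and a tableau for $H+K_j$ for an appropriate $k$, and then checking that the product of the $c_T$-factors (suitably normalized) matches. Concretely, write $n=|G|$, $n'=|H|$, so $G+H$ has $n+n'-1$ vertices and we are considering $T\in\text{SYT}^{(i)}(G+H+K_j)$, a tableau on $n+n'+j-2$ boxes whose largest entry $n+n'+j-2$ sits in column $i$. In the reverse graph $\overline{G+H+K_j}$, the first $n'+j-1$ vertices (corresponding to $H+K_j$ read in reverse, together with the glued clique) are built first: the entries $1,\ldots,n'+j-2$ are placed exactly as they would be for a tableau $T\vert_H\in\text{SYT}(H+K_j)$, since the $\overline{G+H+K_j}$-neighbour relations among these vertices coincide with the $\overline{H+K_j}$-neighbour relations. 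Let $k$ be the column containing entry $n'+j-2$ in $T\vert_H$, i.e. $T\vert_H\in\text{SYT}^{(k)}(H+K_j)$. Then I would like to show that the placement of the remaining entries $n'+j-1,\ldots,n+n'+j-2$ of $T$, read as a fresh tableau on $n+j-1$ boxes with a "clique prefix" of size $k$ built first in a single row, is exactly a tableau $T\vert_G\in\text{SYT}^{(i)}(G+K_k)$. The key point making this work is Proposition~\ref{prop:tabfirstrow}: in $G+K_k$ the first $k$ vertices in reverse order form a clique, so they occupy the first row, matching the fact that in $T$ the boxes $1,\ldots,n'+j-2$ of $T\vert_H$ already form a partial tableau whose first row has length $\geq k$ and whose shape is what $T\vert_G$ starts from after dropping its first $k$ boxes.

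Having set up $\text{break}:\text{SYT}^{(i)}(G+H+K_j)\to\bigsqcup_{k\geq 1}\text{SYT}^{(i)}(G+K_k)\times\text{SYT}^{(k)}(H+K_j)$, the next step is the arithmetic bookkeeping. For shapes, removing one instance of $b_{\max}(T)=i$ from $\text{shape}(T)$ and splitting: $e_{\text{shape}'(T)}=e_{\text{shape}'(T\vert_G)}\cdot e_{\text{shape}'(T\vert_H)}$, because the box structure of $T$ is literally the union of that of $T\vert_H$ (its first $n'+j-2$ boxes) and the ``extra'' boxes of $T\vert_G$ beyond its first row of length $k$ — one checks the multiset of row lengths matches after the single removal in column $i$. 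For the $c_T$-factors, I would show
\begin{equation*}
c_T(G+H+K_j)=\frac{c_{T\vert_G}(G+K_k)\,c_{T\vert_H}(H+K_j)}{[k]_q!},
\end{equation*}
where the $[k]_q!$ appears because the clique $K_k$ is ``counted twice'': the factors $c^{(m)}$ for $m=n'+j-1,\ldots$ in $T$ coincide with the factors for the corresponding entries of $T\vert_G$ (all the $\delta$-sequences, $R_m$, $W_m$, $b_m$ agree since the relevant $\overline{G}$-neighbour relations are preserved and the path/clique contributes nothing new), while the factors $c^{(1)},\ldots,c^{(n'+j-2)}$ in $T$ coincide with those in $T\vert_H$; but $c_{T\vert_G}(G+K_k)$ also includes the clique factor $[k]_q!$ coming from its first $k$ entries, which has no counterpart in $c_T(G+H+K_j)$ since those vertices are ``inside'' $H+K_j$. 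Combining the $\frac1{[i]_q[j-1]_q!}$ normalization with $\frac1{[i]_q[k-1]_q!}$ and $\frac1{[k]_q[j-1]_q!}$, and noting $[k]_q[k-1]_q!=[k]_q!$, gives exactly $(T_{G+H}(q))_{i,j}=\sum_k (T_G(q))_{i,k}(T_H(q))_{k,j}$ after summing over $T$.

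Then the computation
\begin{align*}
(T_{G+H}(q))_{i,j}&=\frac{1}{[i]_q[j-1]_q!}\sum_{T\in\text{SYT}^{(i)}(G+H+K_j)}c_T(G+H+K_j)e_{\text{shape}'(T)}\\
&=\sum_{k\geq 1}\Bigl(\frac{1}{[i]_q[k-1]_q!}\sum_{T\vert_G}c_{T\vert_G}(G+K_k)e_{\text{shape}'(T\vert_G)}\Bigr)\Bigl(\frac{1}{[k]_q[j-1]_q!}\sum_{T\vert_H}c_{T\vert_H}(H+K_j)e_{\text{shape}'(T\vert_H)}\Bigr)\\
&=\sum_{k\geq 1}(T_G(q))_{i,k}(T_H(q))_{k,j}=(T_G(q)T_H(q))_{i,j}
\end{align*}
finishes the proof, where the middle equality uses the bijection together with the factorizations of signs-free weights, shapes, and the $[k]_q!$ identity above; the outer sum over $k$ is finite by Proposition~\ref{prop:tabmatrixzeroes}.

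The main obstacle I expect is verifying cleanly that the map $\text{break}$ is well-defined and bijective — specifically, that after peeling off the first $n'+j-2$ entries (which form a legitimate element of $\text{SYT}(H+K_j)$) the remaining entries of $T$, reinterpreted with vertex labels shifted down by $n'-1$ and with a fresh size-$k$ clique prepended, genuinely satisfy Hikita's condition $b_m\in W_m$ for the graph $G+K_k$. This requires matching the $\delta$-sequences step by step: one must argue that the ``non-neighbour column'' structure seen by an entry of $G+K_k$ beyond the clique is identical to what the corresponding entry of $G+H+K_j$ sees, using that (i) the vertices of $H$ strictly below the glue vertex are $\overline{G+H+K_j}$-non-neighbours of all vertices of $G$ above the glue, just as the clique $K_k$ vertices are in $\overline{G+K_k}$, and (ii) the partial-tableau shapes agree at every stage because $\text{break}$ reuses the same boxes. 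Formalizing (i) requires being careful about how the NUIG structure of $G+H$ interacts with the reversal $\overline{G+H+K_j}$ and the glue vertex, and this is where I would spend most of the effort; the factor-matching and the $[k]_q!$ bookkeeping are then routine. It may be cleanest to phrase the bijection inductively on the number of boxes dropped, which also makes the $\delta$-sequence comparison transparent.
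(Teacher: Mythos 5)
Your plan is the paper's proof: the same $\text{break}$ bijection splitting $T\in\text{SYT}^{(i)}(G+H+K_j)$ into an initial segment $T\vert_H$ and a tableau $T\vert_G$ obtained by prepending a clique row of length $k$ to the remaining entries, the same factorization $c_T(G+H+K_j)=c_{T\vert_G}(G+K_k)c_{T\vert_H}(H+K_j)/[k]_q!$, the same shape identity, and the same normalization arithmetic. Two things need attention. First, an off-by-one that does matter as written: $H+K_j$ has $n'+j-1$ vertices, so $T\vert_H$ must be the restriction of $T$ to the entries $1,\ldots,n'+j-1$ and $k=b_{n'+j-1}(T)$ must be the column of the entry $n'+j-1$ (the glue vertex); with your choice ($T\vert_H$ the first $n'+j-2$ entries and $k$ the column of entry $n'+j-2$), $T\vert_H$ has the wrong number of boxes to lie in $\text{SYT}(H+K_j)$, and $T\vert_G$ would carry $n+k$ boxes while $G+K_k$ has only $n+k-1$ vertices, so the $[k]_q!$ bookkeeping would not close. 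This is a slip rather than a conceptual error, but it must be fixed before the rest goes through.

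Second, the step you defer (``the main obstacle'') is precisely the content of the paper's argument, and your reasons (i)--(ii) are the right ones; to execute it, observe that any later entry of $T$ corresponds to a vertex of $G$ strictly below the glue, which in $G+H+K_j$ is adjacent only to vertices $\leq n$, so among the entries of $T\vert_H$ the only possible $\overline{G+H+K_j}$-neighbour is the glue entry $n'+j-1$ sitting at the top of column $k$ --- exactly the role played by entry $k$ (the glue vertex of the clique) in $T\vert_G$. All other columns coming from $T\vert_H$ carry $\delta=0$ and contribute nothing to $R_m$, $W_m$, or the $q$-exponent in $c^{(m)}$, so at every step the data for $T$ and $T\vert_G$ coincide (in the paper's notation, $\delta^{(n'+j-1+t)}(T)=\delta^{(k+t)}(T\vert_G)$ together with the equalities of $R$, $W$, and $c$-factors); this gives both that $T\vert_G\in\text{SYT}^{(i)}(G+K_k)$ and the factor matching, and the inverse map is recovered from the ballot sequences since $b_t(T\vert_G)=t$ for $t\leq k$ by Proposition~\ref{prop:tabfirstrow}. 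With the indexing corrected and this comparison written out, your argument coincides with the paper's proof of the proposition (and parallels Proposition~\ref{prop:ftmult}, as you intend).
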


\begin{proof}
We will find a bijection
\begin{equation*}
\text{break}:\text{SYT}^{(i)}(G+H+K_j)\to\bigsqcup_{k\geq 1}\text{SYT}^{(i)}(G+K_k)\times\text{SYT}^{(k)}(H+K_j)
\end{equation*}
such that if $\text{break}(T)=(T\vert_G,T\vert_H)\in\text{SYT}^{(i)}(G+K_k)\times\text{SYT}^{(k)}(H+K_j)$, then
\begin{equation}\label{eq:breaktabprops}
c_T(G+H+K_j)=\frac{c_{T\vert_G}(G+K_k)c_{T\vert_H}(H+K_j)}{[k]_q!}\text{ and }
e_{\text{shape}'(T)}=e_{\text{shape}'(T\vert_G)}e_{\text{shape}'(T\vert_H)}.
\end{equation}
Then we will have, as desired, that
\begin{align*}
(T_{G+H}(q))_{i,j}=\frac 1{[i]_q[j-1]_q!}&\sum_{T\in\text{SYT}^{(i)}(G+H+K_j)}c_T(G+H+K_j)e_{\text{shape}'(T)}\\=\sum_{k\geq 1}&\left(\frac 1{[i]_q[k-1]_q!}\sum_{T\vert_G\in\text{SYT}^{(i)}(G+K_k)}c_{T\vert_G}(G+K_k)e_{\text{shape}'(T\vert_G)}\right)\\&\left(\frac 1{[k]_q[j-1]_q!}\sum_{T\vert_H\in\text{SYT}^{(k)}(H+K_j)}c_{T\vert_H}(H+K_j)e_{\text{shape}'(T\vert_H)}\right)\\=\sum_{k\geq 1}&(T_G(q))_{i,k}(T_H(q))_{k,j}=(T_G(q)T_H(q))_{i,j}.
\end{align*}
Let $T\in\text{SYT}^{(i)}(G+H+P_j)$ and let $b=b(T)$. Let $n=|G|$, $n'=|H|$, and let $k=b_{n'+j-1}(T)$ be the column of $T$ that contains the entry $(n'+j-1)$. We now define the tableaux $T\vert_G\in\text{SYT}^{(i)}(G+K_k)$ and $T\vert_H\in\text{SYT}^{(k)}(H+K_j)$ by setting
\begin{equation*}
b(T\vert_G)=(1,2,\ldots,k,b_{n'+j}(T),\ldots,b_{n+n'+j-2}(T))\text{ and }b(T\vert_H)=(b_1(T),\ldots,b_{n'+j-1}(T)).
\end{equation*}
An example is given in Figure \ref{fig:tabbreakexample}.\\

\begin{figure}
$$\begin{tikzpicture}

\node at (5.25,1.5) (){$K_3$};
\node at (2,1.5) (){$G_{}$};
\node at (4,1.5) (){$H_{}$};
\node at (10.5,1.5) (){$G_{}$};
\node at (11.75,1.5) (){$K_3$};
\node at (14.5,1.5) (){$H_{}$};
\node at (15.75,1.5) (){$K_3$};
\draw (1,0)--(1.5,-0.866) (1,0)--(1.5,0.866) (1.5,0.866)--(2.5,0.866) (1,0)--(2.5,0.866)--(2.5,-0.866)--(1,0) (1.5,0.866)--(1.5,-0.866)--(3,0)--(1.5,0.866) (1.5,0.866)--(2.5,-0.866) (1.5,-0.866)--(2.5,0.866) (3,0)--(4,0)--(4.5,0.866)--(5,0)--(4,0);
\draw (2.5,0.866)--(3,0)--(2.5,-0.866)--(1.5,-0.866) (3,0)--(3.5,0.866)--(4.5,0.866) (3.5,0.866)--(4,0);
\draw (5,0)--(5.5,0)--(5.25,0.433)--(5,0);
\filldraw (1,0) circle (3pt) node[align=center,below] (1){1};
\filldraw (1.5,0.866) circle (3pt) node[align=center,above] (2){2};
\filldraw (1.5,-0.866) circle (3pt) node[align=center,below] (3){3};
\filldraw (2.5,0.866) circle (3pt) node[align=center,above] (4){4};
\filldraw (2.5,-0.866) circle (3pt) node[align=center,below] (5){5};
\filldraw (3,0) circle (3pt) node[align=center,below] (6){6};
\filldraw (3.5,0.866) circle (3pt) node[align=center,above] (7){7};
\filldraw (4,0) circle (3pt) node[align=center,below] (8){8};
\filldraw (4.5,0.866) circle (3pt) node[align=center,above] (9){9};
\filldraw (5,0) circle (3pt) node[align=center,below] (10){10};
\filldraw (5.5,0) circle (3pt) node[align=center,below] (12){12};
\filldraw (5.25,0.433) circle (3pt) node[align=center,above] (11){11};

\draw [arrows = {-Stealth[scale=2]}] (6.5,0)--(9,0);
\draw [arrows = {-Stealth[scale=2]}] (9,0)--(6.5,0);

\draw (9.5,0)--(10,0.866)--(11,0.866)--(11.5,0)--(11,-0.866)--(10,-0.866)--(9.5,0) (10,0.866)--(10,-0.866)--(11.5,0)--(10,0.866) (9.5,0)--(11,0.866)--(11,-0.866)--(9.5,0) (10,0.866)--(11,-0.866) (10,-0.866)--(11,0.866) (11.5,0)--(11.75,0.433)--(12,0)--(11.5,0);
\draw (13.5,0)--(16,0) (13.5,0)--(14,0.866)--(14.5,0)--(15,0.866)--(15.5,0)--(15.75,0.433)--(16,0) (15,0.866)--(14,0.866);
\filldraw (9.5,0) circle (3pt) node[align=center,below] (1){1};
\filldraw (10,0.866) circle (3pt) node[align=center,above] (2){2};
\filldraw (10,-0.866) circle (3pt) node[align=center,below] (3){3};
\filldraw (11,0.866) circle (3pt) node[align=center,above] (4){4};
\filldraw (11,-0.866) circle (3pt) node[align=center,below] (5){5};
\filldraw (11.5,0) circle (3pt) node[align=center,below] (6){6};
\filldraw (12,0) circle (3pt) node[align=center,below] (8){8};
\filldraw (11.75,0.433) circle (3pt) node[align=center,above] (7){7};

\filldraw (13.5,0) circle (3pt) node[align=center,below] (1){1};
\filldraw (14,0.866) circle (3pt) node[align=center,above] (2){2};
\filldraw (14.5,0) circle (3pt) node[align=center,below] (3){3};
\filldraw (15,0.866) circle (3pt) node[align=center,above] (4){4};
\filldraw (15.5,0) circle (3pt) node[align=center,below] (5){5};
\filldraw (16,0) circle (3pt) node[align=center,below] (7){7};
\filldraw (15.75,0.433) circle (3pt) node[align=center,above] (6){6};

\node at (1.5,-3) (){$T$};
\draw (2,-3.25)--(5,-3.25) (2,-2.75)--(5,-2.75) (2,-2.25)--(4,-2.25) (2,-1.75)--(3,-1.75) (2,-3.25)--(2,-1.75) (2.5,-3.25)--(2.5,-1.75) (3,-3.25)--(3,-1.75) (3.5,-3.25)--(3.5,-2.25) (4,-3.25)--(4,-2.25) (4.5,-3.25)--(4.5,-2.75) (5,-3.25)--(5,-2.75);
\node at (2.25,-3) (){\textcolor{cyan}1};
\node at (2.75,-3) (){\textcolor{cyan}2};
\node at (3.25,-3) (){\textcolor{cyan}3};
\node at (3.75,-3) (){\textcolor{cyan}4};
\node at (4.25,-3) (){\textcolor{magenta}9};
\node at (4.75,-3) (){\textcolor{magenta}{10}};
\node at (2.25,-2.5) (){\textcolor{teal}5};
\node at (2.75,-2.5) (){\textcolor{teal}6};
\node at (3.25,-2.5) (){\textcolor{teal}7};
\node at (3.75,-2.5) (){\textcolor{magenta}8};
\node at (2.25,-2) (){\textcolor{magenta}{11}};
\node at (2.75,-2) (){\textcolor{magenta}{12}};

\node at (8.5,-3) (){$T\vert_G$};
\draw (9,-3.25)--(12,-3.25) (9,-2.75)--(12,-2.75) (9,-2.25)--(10,-2.25) (9,-3.25)--(9,-2.25) (9.5,-3.25)--(9.5,-2.25) (10,-3.25)--(10,-2.25) (10.5,-3.25)--(10.5,-2.75) (11,-3.25)--(11,-2.75) (11.5,-3.25)--(11.5,-2.75) (12,-3.25)--(12,-2.75);
\node at (9.25,-3) (){\textcolor{teal}1};
\node at (9.75,-3) (){\textcolor{teal}2};
\node at (10.25,-3) (){\textcolor{teal}3};
\node at (10.75,-3) (){\textcolor{magenta}4};
\node at (11.25,-3) (){\textcolor{magenta}5};
\node at (11.75,-3) (){\textcolor{magenta}6};
\node at (9.25,-2.5) (){\textcolor{magenta}7};
\node at (9.75,-2.5) (){\textcolor{magenta}8};
\node at (12.5,-3.3) {\huge{,}};

\node at (13.5,-3) (){$T\vert_H$};
\draw (14,-3.25)--(16,-3.25) (14,-2.75)--(16,-2.75) (14,-2.25)--(15.5,-2.25) (14,-3.25)--(14,-2.25) (14.5,-3.25)--(14.5,-2.25) (15,-3.25)--(15,-2.25) (15.5,-3.25)--(15.5,-2.25) (16,-3.25)--(16,-2.75);
\node at (14.25,-3) (){\textcolor{cyan}1};
\node at (14.75,-3) (){\textcolor{cyan}2};
\node at (15.25,-3) (){\textcolor{cyan}3};
\node at (15.75,-3) (){\textcolor{cyan}4};
\node at (14.25,-2.5) (){\textcolor{teal}5};
\node at (14.75,-2.5) (){\textcolor{teal}6};
\node at (15.25,-2.5) (){\textcolor{teal}7};

\node at (3.25,-4) (){$q^{10}\frac{[2]_q^8}{[5]_q}$};

\node at (10.5,-4) (){$q^7\frac{[3]_q[2]_q^5}{[5]_q}$};

\node at (15,-4) (){$q^3[2]_q^4$};
\end{tikzpicture}$$

\begin{align*}
c_T=& &\textcolor{cyan}{[1]_q\cdot[2]_q\cdot[3]_q\cdot\frac{[4]_q}{[3]_q}}& &\cdot \ \textcolor{teal}{q^2\frac{[2]_q}{[4]_q}\cdot q\frac{[2]_q^2}{[3]_q}\cdot[3]_q}& &\cdot& &\textcolor{magenta}{\frac{[4]_q}{[3]_q}\cdot\frac{[5]_q[2]_q}{[4]_q}\cdot\frac{[6]_q[3]_q}{[5]_q}\cdot q^4\frac{[2]_q}{[6]_q}\cdot q^3\frac{[2]_q^2}{[5]_q}}&\\
c_{T\vert_H}=& &\textcolor{cyan}{[1]_q\cdot[2]_q\cdot[3]_q\cdot\frac{[4]_q}{[3]_q}}& &\cdot \ \textcolor{teal}{q^2\frac{[2]_q}{[4]_q}\cdot q\frac{[2]_q^2}{[3]_q}\cdot[3]_q}& && &&\\
c_{T\vert_G}=& && &\hspace{32pt}\textcolor{teal}{[1]_q\cdot [2]_q\cdot[3]_q}& &\cdot& &\textcolor{magenta}{\frac{[4]_q}{[3]_q}\cdot\frac{[5]_q[2]_q}{[4]_q}\cdot\frac{[6]_q[3]_q}{[5]_q}\cdot q^4\frac{[2]_q}{[6]_q}\cdot q^3\frac{[2]_q^2}{[5]_q}}&
\end{align*}
\caption{\label{fig:tabbreakexample} An example of the map $$\text{break}:\text{SYT}^{(2)}(G+H+K_3)\to\bigsqcup_{k\geq 1}\text{SYT}^{(2)}(G+K_k)\times\text{SYT}^{(k)}(H+K_3).$$}
\end{figure}

For every $1\leq t\leq n'+j-1$, we have $b_t(T\vert_H)=b_t(T)$ and $\delta^{(t)}(T\vert_H)=\delta^{(t)}(T)$, which means that $W_t(T\vert_H)=W_t(T)$, $T\vert_H\in\text{SYT}^{(k)}(H+K_j)$, and $c^{(t)}_{T\vert_H}(H+K_j)=c^{(t)}_T(G+H+K_j)$. Now we have \begin{equation*}
\delta^{(n'+j)}(T)=(1,0,\ldots,0,1,0)=\delta^{(k+1)}(T\vert_G),
\end{equation*} 
so we have the same possible choices of columns in which to drop the subsequent boxes in $T$ and $T\vert_G$. In other words, for $1\leq t\leq n-1$, we have $\delta^{(n'+j-1+t)}(T)=\delta^{(k+t)}(T\vert_G)$, $R_{n'+j-1+t}(T)=R_{k+t}(T\vert_G)$, $W_{n'+j-1+t}(T)=W_{k+t}(T\vert_G)$, and $c^{(n'+j-1+t)}(T)=c_{k+t}(T\vert_G)$. Because $T\in\text{SYT}(G+H+P_j)$, we have 
\begin{equation*}
b_{k+t}(T\vert_G)=b_{n'+j-1+t}(T)\in W_{n'+j-1+t}(T)=W_{k+t}(T\vert_G),
\end{equation*} and since $b_{n+k-1}(T\vert_G)=b_{n+n'+j-2}(T)=i$, then $T\vert_G\in\text{SYT}^{(i)}(G+K_k)$. The inverse map is constructed as follows. Given
\begin{equation*} T\vert_G\in\text{SYT}^{(i)}(G+K_k)\text{ and }T\vert_H\in\text{SYT}^{(k)}(H+K_j),\end{equation*} we must have $b(T\vert_G)_t=t$ for $1\leq t\leq k$ by Proposition \ref{prop:tabfirstrow}, so we can recover the tableau $T\in\text{SYT}^{(i)}(G+H+P_j)$ by setting 
\begin{equation*}
b(T)=(b_1(T\vert_H),\ldots,b_{n'+j-1}(T\vert_H),b_{k+1}(T\vert_G),\ldots,b_{n+k-1}(T\vert_G)).
\end{equation*}
Finally, we check that \eqref{eq:breaktabprops} holds. We have
\begin{align*}
c_{T\vert_H}(H+K_j)c_{T\vert_G}(G+K_k)&=\prod_{t=1}^{n'+j-1}c^{(t)}_{T\vert_H}(H+K_j)\prod_{t=1}^kc^{(t)}_{T\vert_G}(G+K_k)\prod_{t=k+1}^{n+k-1}c^{(t)}_{T\vert_G}(G+K_k)\\&=\prod_{t=1}^{n'+j-1}c^{(t)}_T(G+H+K_j)[k]_q!\prod_{t=n'+j}^{n+n'+j-2}c^{(t)}_T(G+H+K_k)\\&=[k]_q!c_T(G+H+K_k).
\end{align*}
We have $e_{\text{shape}(T)}e_k=e_{\text{shape}(T\vert_G)}e_{\text{shape}(T\vert_H)}$ because $T\vert_H$ contains an additional row of length $k$.
Thus we have $e_{\text{shape}'(T)}=e_{\text{shape}'(T\vert_G)}e_{\text{shape}'(T\vert_H)}$ because we remove an instance of $k$. 
\end{proof}

\begin{proposition} \label{prop:tabpn}
For the path $P_n$, we have $T_{P_n}(q)=F_{P_n}(q)$. 
\end{proposition}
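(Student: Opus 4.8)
The plan is a short induction on $n$, bootstrapping from the two small cases already computed and the multiplicativity of both matrices under the gluing operation.

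First I would dispose of the base cases. For $n=1$, Proposition~\ref{prop:tabp1} gives $T_{P_1}(q)=I$ and Proposition~\ref{prop:ftp1} gives $F_{P_1}(q)=I$, so they agree. For $n=2$, Proposition~\ref{prop:tabp2} and Proposition~\ref{prop:ftp2} give the same explicit matrix (indeed Proposition~\ref{prop:tabp2} already records $T_{P_2}(q)=F_{P_2}(q)$).

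For the inductive step, fix $n\geq 3$ and assume $T_{P_{n-1}}(q)=F_{P_{n-1}}(q)$. Since $P_n=P_{n-1}+P_2$ and both $P_{n-1}$ and $P_2$ are NUIGs, I would apply Proposition~\ref{prop:tabmult} to the tableau matrices and Proposition~\ref{prop:ftmult} to the forest triple matrices, obtaining
\begin{equation*}
T_{P_n}(q)=T_{P_{n-1}}(q)\,T_{P_2}(q)=F_{P_{n-1}}(q)\,F_{P_2}(q)=F_{P_n}(q),
\end{equation*}
where the middle equality uses the inductive hypothesis together with $T_{P_2}(q)=F_{P_2}(q)$ from Proposition~\ref{prop:tabp2}. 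This closes the induction.

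There is essentially no obstacle here: all the real work (the combinatorial bijections establishing $T_{G+H}(q)=T_G(q)T_H(q)$ and $F_{G+H}(q)=F_G(q)F_H(q)$, and the base-case computations) has already been done. The only point worth a sentence of care is that the matrix products are well defined, but this follows since each column of $F_{P_n}(q)$, hence of $T_{P_n}(q)$, has finitely many nonzero entries by Proposition~\ref{prop:ftmatrixzeroes} and Proposition~\ref{prop:tabmatrixzeroes}.
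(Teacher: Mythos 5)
Your proposal is correct and matches the paper's argument: the paper also handles $n\leq 2$ via Propositions~\ref{prop:tabp1} and~\ref{prop:tabp2}, and for $n\geq 3$ uses Propositions~\ref{prop:ftmult} and~\ref{prop:tabmult} to write $T_{P_n}(q)=(T_{P_2}(q))^{n-1}=(F_{P_2}(q))^{n-1}=F_{P_n}(q)$, which is the same decomposition of $P_n$ into copies of $P_2$ that your induction carries out step by step.
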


\begin{proof}
If $n\leq 2$, then the result follows from Proposition~\ref{prop:tabp1} and Proposition~\ref{prop:tabp2}. For $n\geq 3$, we have by Proposition~\ref{prop:ftmult} and Proposition~\ref{prop:tabmult} that
\begin{equation*}
T_{P_n}(q)=(T_{P_2}(q))^{n-1}=(F_{P_2}(q))^{n-1}=F_{P_n}(q).
\end{equation*}
\end{proof}

We now show that the tableau matrix and the forest triple matrix agree for every NUIG.
\begin{theorem}\label{thm:samemats}
Let $G=([n],E)$ be an NUIG. Then $F_G(q)=T_G(q)$. 
\end{theorem}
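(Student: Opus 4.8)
The plan is to reduce to a statement about first columns and then induct on $n=|G|$, peeling off vertex $1$ — the vertex that is singled out in both models, as reflected in the definitions of $X^{(i)}_G(\bm x;q)=(F_G(q))_{i,1}$ and of $(T_G(q))_{i,1}$.

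\emph{Reduction to first columns.} The tableau matrix satisfies the exact analogue of Proposition~\ref{prop:ftmatrixkj}: Proposition~\ref{prop:tabfirstrow} forces $T_{K_j}(q)$ to have first column equal to $[j-1]_q!$ in row $j$ and $0$ elsewhere, so Proposition~\ref{prop:tabmult} gives $(T_G(q))_{i,j}=\frac{1}{[j-1]_q!}(T_{G+K_j}(q))_{i,1}$, mirroring $(F_G(q))_{i,j}=\frac{1}{[j-1]_q!}(F_{G+K_j}(q))_{i,1}$. Hence, if one shows $(F_H(q))_{i,1}=(T_H(q))_{i,1}$ for every NUIG $H$ and every $i$, then applying this with $H=G+K_j$ and dividing by $[j-1]_q!$ gives $(F_G(q))_{i,j}=(T_G(q))_{i,j}$ for all $i,j$. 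So it suffices to prove that $F_H(q)$ and $T_H(q)$ agree in the first column for all NUIGs $H$.

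\emph{The induction.} Induct on $n=|G|$, with inductive hypothesis that $F_H(q)=T_H(q)$ for all NUIGs $H$ with $|H|<n$; the cases $n\le 2$ are Propositions~\ref{prop:ftp1},~\ref{prop:tabp1},~\ref{prop:ftp2},~\ref{prop:tabp2}. If $G$ decomposes as $G=G_1+G_2$ under the gluing operation with $|G_1|,|G_2|<n$ (which covers the case of a disconnected $G$, after splitting off an isolated vertex), then Propositions~\ref{prop:ftmult} and~\ref{prop:tabmult} give $F_G(q)=F_{G_1}(q)F_{G_2}(q)=T_{G_1}(q)T_{G_2}(q)=T_G(q)$. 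So we may assume $G$ is connected and indecomposable, and we let $G_-$ be the NUIG on the vertices $\{2,\ldots,n\}$ (relabelled to $[n-1]$), for which $F_{G_-}(q)=T_{G_-}(q)$ by induction. Since no vertex of an NBC tree can have two larger neighbours, vertex $1$ is a leaf of the tree $T_1$ that contains it, so a forest triple of $G$ counted in $(F_G(q))_{i,1}$ amounts to a forest triple of $G_-$ together with a choice of where to hang vertex $1$; on the other side, because box $n$ of a tableau in $\text{SYT}(G)$ corresponds (via the reverse graph $\bar G$) to vertex $1$ of $G$, deleting box $n$ from $T\in\text{SYT}(G)$ with $b_n(T)=i$ yields a tableau of $\text{SYT}(G_-)$, and the multiplicative factor $c^{(n)}_T(G)$ — which carries the factor $[i]_q$ that is divided out in $(T_G(q))_{i,1}$ — encodes precisely the data of where box $n$ was dropped. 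The goal is to check, using $F_{G_-}(q)=T_{G_-}(q)$, that these two one-step descriptions of the first columns agree: that the ``attach vertex $1$'' contribution to $(\text{sign},\ e_{\text{type}'},\ q^{\text{weight}})$ matches, after suitable grouping, the ``drop box $n$'' contribution $c^{(n)}_T(G)/[i]_q$ together with its effect on $e_{\text{shape}'}$.

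\emph{Main obstacle.} I expect this last matching to be the crux. The two models run in opposite directions — forest triples grow NBC trees from the small vertices via $\text{list}$, while Hikita's construction drops boxes after reading $\bar G$ — so a recursion transparent for one model is not obviously so for the other; in particular $\text{list}(T_1)$ is not simply $\text{list}(T_1\setminus\{1\})$, so the inversion count inside $\text{weight}(\mathcal F)$ must be re-expanded carefully to line it up with the exponent of $q$ in the product of the factors $c^{(k)}_T$. It is quite possible that the cleanest route avoids an explicit one-step recursion altogether: one could instead apply the sign-reversing involution on forest triples from \cite[Proposition~3.7]{qforesttriples}, identify its fixed points (with their $q$- and $e$-weights) directly with the tableaux in $\text{SYT}(G)$, and deduce equality of the two matrices column by column from that bijection.
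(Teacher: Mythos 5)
Your reduction to first columns is sound and is exactly how the paper finishes (the paper even gets the tableau half of it for free, since $G+K_j+K_1=G+K_j$ makes $(T_G(q))_{i,j}=\frac 1{[j-1]_q!}(T_{G+K_j}(q))_{i,1}$ true by definition). But the heart of the theorem is the equality of the first columns, and there your argument stops at a sketch: the ``one-step matching'' between attaching vertex $1$ to a forest triple of $G_-$ and dropping box $n$ into a tableau of $\text{SYT}(G_-)$ is precisely the content of the statement, and you explicitly leave it unproven (``Main obstacle'', ``after suitable grouping''). Worse, the inductive hypothesis $F_{G_-}(q)=T_{G_-}(q)$ is not in a usable form for that step: both matrices encode information about gluing at the \emph{largest} vertex of $G_-$ (via $G_-+P_j$ resp.\ $G_-+K_j$), whereas your peeling happens at vertex $1$ / box $n$, at the opposite end. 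On the tableau side the factor $c^{(n)}_T(G)$ is a rational function depending on the full column profile of $T\vert_{<n}$ (through $R_n(T)$ and $W_n(T)$), not just on $\text{shape}(T\vert_{<n})$ or on the data that the matrix of $G_-$ records, so there is no evident way to group the terms so that the hypothesis applies; on the forest-triple side, attaching vertex $1$ changes which tree is first, its composition $\alpha^{(1)}$, and the $G$-inversions in a way that does not factor through $F_{G_-}(q)$. So as written this is a genuine gap, not a routine verification.

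The paper's route is different and sidesteps any direct combinatorial comparison. It tests the two first columns against the family of row vectors $\vec v(q)F_{P_i}(q)=\vec v(q)T_{P_i}(q)$ (equal by $F_{P_i}(q)=T_{P_i}(q)$, which is your base case iterated via the product formulas): since $\vec v(q)F_{P_i}(q)F_G(q)\vec w^T=X_{P_i+G}(\bm x;q)=\vec v(q)T_{P_i}(q)T_G(q)\vec w^T$ for every $i$, the matrix $M$ with these rows annihilates $F_G(q)\vec w^T-T_G(q)\vec w^T$; both columns vanish in rows $i\geq n+1$ by Propositions \ref{prop:ftmatrixzeroes} and \ref{prop:tabmatrixzeroes}, and the truncated matrix $M(n)$ is shown to have nonzero determinant by a degree argument (the term $[2n-1]_qe_{2n-1}$ in $M_{n,n}$), forcing the first columns to coincide. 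If you want to salvage your plan, you would either have to prove the vertex-$1$/box-$n$ matching directly (essentially a new bijective theorem) or, as you suggest, build an explicit bijection between fixed points of the involution of \cite[Proposition 3.7]{qforesttriples} and $\text{SYT}(G)$ — both substantially harder than the linear-algebra argument the paper uses.
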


\begin{proof} We first show that $F_G(q)$ and $T_G(q)$ have the same first column, or in other words, that $F_G(q)\vec w^T=T_G(q)\vec w^T$. By Proposition \ref{prop:xmatrix} and Proposition \ref{prop:xtabmatrix}, we can calculate the chromatic quasisymmetric function $X_{P_i+G}(\bm x;q)$ for any $i$ by using either the forest triple matrix or the tableau matrix, that is,
\begin{align*}
X_{P_i+G}(\bm x;q)&=\vec v(q)F_{P_i+G}(q)\vec w^T=\vec v(q)F_{P_i}(q)F_G(q)\vec w^T\text{ and }\\X_{P_i+G}(\bm x;q)&=\vec v(q)T_{P_i+G}(q)\vec w^T=\vec v(q)T_{P_i}(q)T_G(q)\vec w^T.
\end{align*}
By Proposition \ref{prop:tabpn}, we have that $F_{P_i}(q)=T_{P_i}(q)$, so let $M$ be the infinite matrix whose $i$-th row is the row vector $\vec v(q)F_{P_i}(q)=\vec v(q)T_{P_i}(q)$. Then we have that \begin{equation*}
M(F_G(q)\vec w^T-T_G(q)\vec w^T)=0.
\end{equation*} By Proposition \ref{prop:ftmatrixzeroes} and Proposition \ref{prop:tabmatrixzeroes}, the first columns of $F_G(q)$ and $T_G(q)$ satisfy \begin{equation*}(F_G(q))_{i,1}=(T_G(q))_{i,1}=0\end{equation*} for $i\geq n+1$, so we can restrict $M$ to an $n$-by-$n$ matrix $M(n)$ and restrict $F_G(q)\vec w^T-T_G(q)\vec w^T$ to a column vector of size $n$. We now show that this restricted matrix $M(n)$ has $\det(M(n))\neq 0$, which will establish our first claim.\\

By Proposition \ref{prop:ftpn}, we have \begin{equation*}
M_{i,j}=\sum_{k\geq 1}[k]_qe_k(F_{P_i}(q))_{k,j}=\sum_{k\geq 1}[k]_qe_k\sum_{\substack{\alpha\vDash i+j-k-1\\\alpha_\ell\geq j}}([\alpha_1]_q-1)\cdots([\alpha_\ell]_q-1)e_{\text{sort}(\alpha)}.
\end{equation*}
In particular, $M_{i,j}$ is a symmetric function of degree $(i+j-1)$, so $M_{n,n}$ is the only entry of $M(n)$ with degree $(2n-1)$. Furthermore, $M_{n,n}$ contains the term $[2n-1]_qe_{2n-1}$, corresponding to the summand where $k=2n-1$ and $\alpha$ is the empty composition. Therefore, we have
\begin{equation*}
\det M(n)=[2n-1]_qe_{2n-1}\det M(n-1)+g(n),
\end{equation*}
where $g(n)$ is a symmetric function in which $e_{2n-1}$ does not appear. Note that $\det M(1)=M_{1,1}=e_1$. By induction, we have $\det M(n-1)\neq 0$, so the factor $e_{2n-1}$ appears with nonzero coefficient in $\det M(n)$, and in particular, $\det M(n)\neq 0$. This proves that for every NUIG $G$, the matrices $F_G(q)$ and $T_G(q)$ have the same first column.\\

Finally, by Proposition \ref{prop:ftmatrixkj}, the $j$-th column of $F_G(q)$ is simply the first column of $F_{G+K_j}(q)$, divided by $[j-1]_q!$. Similarly, by definition, the $j$-th column of $T_G(q)$ is the first column of $T_{G+K_j}(q)$, divided by $[j-1]_q!$. Because $G+K_j$ is an NUIG, we have shown above that $F_{G+K_j}(q)$ and $T_{G+K_j}(q)$ have the same first column. Therefore, 
\begin{equation*}
(F_G(q))_{i,j}=\frac 1{[j-1]_q!}(F_{G+K_j}(q))_{i,1}=\frac 1{[j-1]_q!}(T_{G+K_j}(q))_{i,1}=(T_G(q))_{i,j},
\end{equation*}
so we have $F_G(q)=T_G(q)$. 
\end{proof}

This connection between $F_G(q)$ and $T_G(q)$ immediately gives us new $e$-positivity results. 

\begin{corollary}\label{cor:ftmatrixpos}
If $G$ is an NUIG, then the entries of $F_G$ are all $e$-positive. 
\end{corollary}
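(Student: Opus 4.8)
The plan is to obtain this as an immediate consequence of Theorem \ref{thm:samemats}, specialized at $q=1$. First note that for an NUIG $G$ the matrix $F_G$ is exactly the evaluation of $F_G(q)$ at $q=1$: by Definition \ref{def:ftm:Fgij} the entries $(F_G)_{i,j}$ and $(F_G(q))_{i,j}$ are sums over the same set $\text{FT}^{(i)}(G+P_j)$, differing only in the monomial $q^{\text{weight}(\mathcal F)}$, which equals $1$ when $q=1$. By Theorem \ref{thm:samemats} we have $F_G(q)=T_G(q)$, hence $F_G=F_G(1)=T_G(1)$, and it remains to check that every entry of $T_G(1)$ is $e$-positive.

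For this I would unwind the definition of $T_G(q)$. Since $[i]_q$ and $[j-1]_q!$ specialize to the positive integers $i$ and $(j-1)!$, we get
\[(T_G(1))_{i,j}=\frac{1}{i\,(j-1)!}\sum_{T\in\text{SYT}^{(i)}(G+K_j)}c_T(G+K_j)\big|_{q=1}\,e_{\text{shape}'(T)}.\]
Each $e_{\text{shape}'(T)}$ is a single elementary symmetric function and the scalar $\tfrac{1}{i(j-1)!}$ is positive, so it suffices to prove $c_T(G+K_j)\big|_{q=1}\geq 0$ for every such $T$. Writing $c_T(G+K_j)=\prod_k c^{(k)}_T(G+K_j)$, each factor $c^{(k)}_T(G+K_j)$ is a nonnegative power of $q$ times $[b_k]_q\,p(b_k,q)$, which at $q=1$ becomes $b_k\cdot p(b_k,1)$; this is nonnegative because $b_k\geq 1$ and, by the probability interpretation of $p(b_k,q)$ recorded just after Hikita's formula, $0\leq p(b_k,1)\leq 1$. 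Hence the product is nonnegative, and every entry of $T_G(1)=F_G$ is $e$-positive.

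There is essentially no obstacle here: once Theorem \ref{thm:samemats} is available, the corollary is pure bookkeeping, and the only substantive input — nonnegativity of Hikita's probability factors at $q=1$ — is already known. If one wished to sidestep the probability remark entirely, one could argue instead via Proposition \ref{prop:ftmatrixkj}: the $j$-th column of $F_G$ is $\tfrac{1}{(j-1)!}$ times the first column of $F_{G+K_j}$, which by Theorem \ref{thm:samemats} equals $\tfrac{1}{(j-1)!}$ times the first column of $T_{G+K_j}(1)$, and this is $e$-positive because it is assembled from the coefficients $c_T(G+K_j)\big|_{q=1}$ occurring in Hikita's $e$-positive formula for $X_{G+K_j}(\bm x)$. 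Either way, $e$-positivity of the $q=1$ specialization of Hikita's coefficients is the heart of the matter, and that is known.
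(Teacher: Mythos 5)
Your proposal is correct and takes essentially the same route as the paper: the paper's proof of this corollary is the one-line observation that, in light of Theorem \ref{thm:samemats}, the entries of $F_G=F_G(1)=T_G(1)$ are built from Hikita's coefficients, which satisfy $c_T(1)\geq 0$. You have simply spelled out the bookkeeping (the $q=1$ specialization and the nonnegativity of the probability factors) that the paper leaves implicit.
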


\begin{proof}
This follows because the rational functions $c_T(G)$ satisfy $c_T(1)\geq 0$.
\end{proof}

\begin{corollary}\label{cor:uicycle}
If $G$ is a sum of NUIGs and cycles, then $X_G(\bm x)$ is $e$-positive.
\end{corollary}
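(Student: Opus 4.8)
The plan is to realize $F_G$ as a product of matrices whose entries are already known to be $e$-positive, and then extract $e$-positivity of $X_G(\bm x)$ from the matrix formula of Proposition~\ref{prop:xmatrix}. First I would write $G=G_1+G_2+\cdots+G_m$, where each $G_i$ is either an NUIG or a cycle $C_{n_i}$ (with vertex set $[n_i]$, so that the sum is defined), and apply Proposition~\ref{prop:ftmult} repeatedly to obtain
\begin{equation*}
F_G=F_{G_1}F_{G_2}\cdots F_{G_m}.
\end{equation*}

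Next I would observe that each factor $F_{G_i}$ has all entries $e$-positive: if $G_i$ is an NUIG this is Corollary~\ref{cor:ftmatrixpos} (which itself rests on Theorem~\ref{thm:samemats} together with $c_T(1)\geq 0$), and if $G_i$ is a cycle it is the last sentence of Proposition~\ref{prop:ftcn}. I would then use the elementary fact that the $e$-positive symmetric functions are closed under addition and multiplication, since a product of monomials $e_\lambda e_\mu$ is again a monomial $e_\nu$ (with $\nu$ the partition rearrangement of the concatenation $\lambda\mu$). The matrix products above are genuinely well-defined because each column of $F_{G_i}$ has only finitely many nonzero entries, by Proposition~\ref{prop:ftmatrixzeroes} for the NUIG factors and directly from the explicit formula in Proposition~\ref{prop:ftcn} for the cycle factors. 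Hence each entry of $F_G$ is a finite sum of products of $e$-positive symmetric functions, so $F_G$ has all entries $e$-positive.

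Finally, by Proposition~\ref{prop:xmatrix},
\begin{equation*}
X_G(\bm x)=\vec v F_G\vec w^T=\sum_{i\geq 1}ie_i\,(F_G)_{i,1},
\end{equation*}
which (again by column-finiteness) is a finite sum of nonnegative integer multiples of products of $e$-positive symmetric functions, hence $e$-positive.

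I do not expect a genuine obstacle internal to this corollary: essentially all the work has been front-loaded into Proposition~\ref{prop:ftmult} (gluing corresponds to matrix multiplication), Theorem~\ref{thm:samemats} (identifying $F_G(q)$ with Hikita's tableau matrix $T_G(q)$, whence $e$-positivity of $F_G$ at $q=1$), and the explicit $e$-positivity of $F_{C_n}$. The only point requiring mild care is the bookkeeping around infinite matrices — namely checking that the products make sense and that one may extract $e$-coefficients termwise — and this is exactly what the column-finiteness statements deliver. One could alternatively avoid matrices by peeling off $G_m,G_{m-1},\ldots$ one at a time while tracking the symmetric functions $X^{(i)}_{G_j+\cdots}$, but the matrix formulation packages precisely this induction.
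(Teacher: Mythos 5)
Your proposal is correct and follows essentially the same route as the paper, which deduces the corollary from Proposition~\ref{prop:xmatrix}, Proposition~\ref{prop:ftmult}, Proposition~\ref{prop:ftcn}, and Corollary~\ref{cor:ftmatrixpos}; you have merely spelled out the bookkeeping (column-finiteness, closure of $e$-positivity under sums and products) that the paper leaves implicit. Note only that Proposition~\ref{prop:ftmatrixzeroes} already applies to arbitrary graphs, so it covers the cycle factors as well, without appealing to the explicit formula in Proposition~\ref{prop:ftcn}.
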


\begin{proof}
This follows from Proposition \ref{prop:xmatrix}, Proposition \ref{prop:ftcn}, and Corollary \ref{cor:ftmatrixpos}.
\end{proof}

We also gain new insight on Hikita's standard Young tableaux. Recall that the factors $c_T(q)$ are rational functions in $q$ and may not be polynomials. If we sum over all tableaux $T\in\text{SYT}(G)$ of a fixed shape $\lambda$, we must get a polynomial because it is the coefficient of $e_\lambda$. We can now show that certain smaller partial sums must also be polynomials. 

\begin{corollary}\label{cor:pieces}
Let $G$ be an NUIG and fix an integer $1\leq i\leq n$. We have
\begin{equation}\label{eq:pieces}
\sum_{\substack{\mathcal F\in\text{FT}(G)\\\alpha^{(1)}_1=i, \ r_1=1}}\text{sign}(\mathcal F)q^{\text{weight}(\mathcal F)}e_{\text{type}(\mathcal F)}=\frac 1{[i]_q}\sum_{\substack{T\in\text{SYT}(G)\\n\text{ in column }i}}c_T(G)e_{\text{shape}(T)}.
\end{equation} 
\end{corollary}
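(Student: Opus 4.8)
The plan is to show that both sides of \eqref{eq:pieces} equal $e_i$ times the $(i,1)$-entry of the relevant infinite matrix, and then to invoke Theorem~\ref{thm:samemats}. Concretely, I would prove that the left-hand side equals $e_i\cdot(F_G(q))_{i,1}$ while the right-hand side equals $e_i\cdot(T_G(q))_{i,1}$; since $F_G(q)=T_G(q)$, the identity follows immediately.

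First I would rewrite the left-hand side. Taking $j=1$ in Definitions~\ref{def:fmat:fti} and~\ref{def:ftm:Fgij}, and using that $G+P_1=G$, one checks that $\text{FT}^{(i)}(G+P_1)$ is exactly $\{\mathcal F\in\text{FT}(G):\alpha^{(1)}_1=i,\ r_1=1\}$, so
\begin{equation*}
(F_G(q))_{i,1}=\sum_{\substack{\mathcal F\in\text{FT}(G)\\\alpha^{(1)}_1=i,\ r_1=1}}\text{sign}(\mathcal F)q^{\text{weight}(\mathcal F)}e_{\text{type}'(\mathcal F)}.
\end{equation*}
For every forest triple occurring here we have $\alpha^{(1)}_1=i$, so $\text{type}(\mathcal F)$ is obtained from $\text{type}'(\mathcal F)$ by reinserting a part equal to $i$, and hence $e_{\text{type}(\mathcal F)}=e_i\,e_{\text{type}'(\mathcal F)}$. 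Multiplying the displayed identity by $e_i$ identifies the left-hand side of \eqref{eq:pieces} with $e_i\cdot(F_G(q))_{i,1}$.

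Next I would rewrite the right-hand side. Since $G+K_1=G$ has $n$ vertices, the entry ``$n+j-1$'' with $j=1$ is just the entry $n$, so $\text{SYT}^{(i)}(G+K_1)=\{T\in\text{SYT}(G):n\text{ is in column }i\}$ and $c_T(G+K_1)=c_T(G)$. Using $[0]_q!=1$, the definition of $T_G(q)$ gives
\begin{equation*}
(T_G(q))_{i,1}=\frac1{[i]_q}\sum_{\substack{T\in\text{SYT}(G)\\n\text{ in column }i}}c_T(G)e_{\text{shape}'(T)}.
\end{equation*}
For each such $T$ the column of $n$ is $b_n(T)=i$, so $\text{shape}(T)$ is $\text{shape}'(T)$ with a part equal to $i$ reinserted, whence $e_{\text{shape}(T)}=e_i\,e_{\text{shape}'(T)}$. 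Multiplying by $e_i$ identifies the right-hand side of \eqref{eq:pieces} with $e_i\cdot(T_G(q))_{i,1}$.

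Finally, Theorem~\ref{thm:samemats} gives $F_G(q)=T_G(q)$, so $(F_G(q))_{i,1}=(T_G(q))_{i,1}$, and the two reductions above coincide. I do not expect a genuine obstacle here: this corollary is essentially a direct consequence of Theorem~\ref{thm:samemats}, and the only thing requiring care is the bookkeeping that converts reduced types and reduced shapes back into their unreduced versions (the extra factor $e_i$), together with checking that the $j=1$ specialization of the matrix entries matches the stated partial sums --- in particular, that the prefactor $\frac1{[i]_q}$ on the right of \eqref{eq:pieces} is precisely the one built into the definition of $T_G(q)$, and that on the forest-triple side fixing $r_1=1$ (rather than summing over $1\le r_1\le i$) is the correct normalization.
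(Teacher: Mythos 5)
Your proposal is correct and follows exactly the paper's argument: the paper also proves this by identifying the left-hand side with $e_i(F_G(q))_{i,1}$ and the right-hand side with $e_i(T_G(q))_{i,1}$ and invoking Theorem~\ref{thm:samemats}. Your write-up simply makes explicit the $j=1$ bookkeeping that the paper leaves implicit.
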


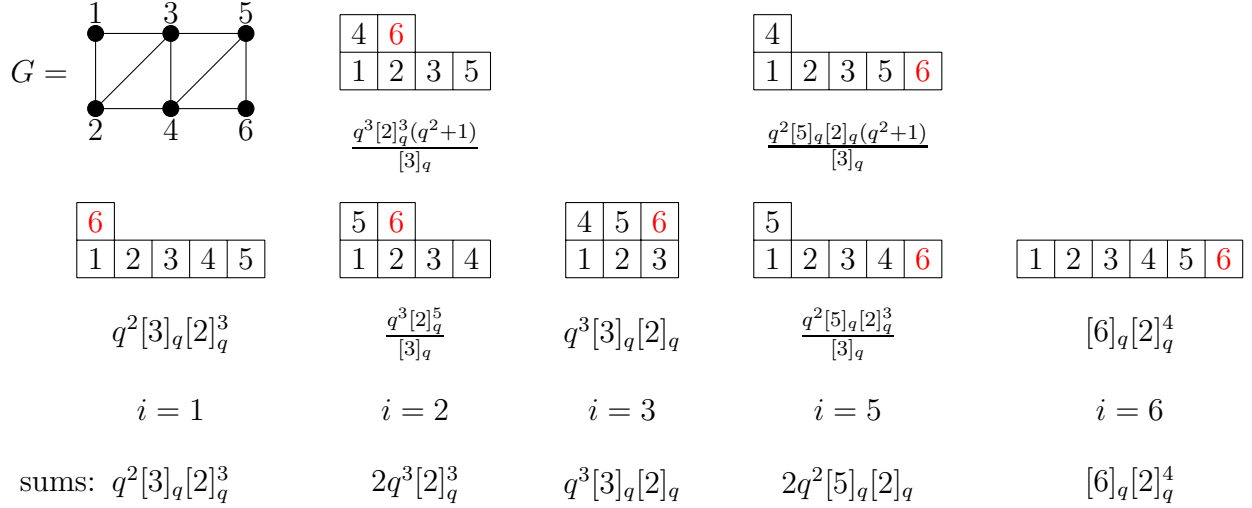
\begin{figure}
$$\begin{tikzpicture}
\draw (-0.75,-0.5) node (1) {$G=$};
\draw (-0.5,-6) node (){sums:};
\filldraw (0,0) circle (3pt) node[align=center,above] (1){1};
\filldraw (0,-1) circle (3pt) node[align=center,below] (2){2};
\filldraw (1,0) circle (3pt) node[align=center,above] (3){3};
\filldraw (1,-1) circle (3pt) node[align=center,below] (4){4};
\filldraw (2,0) circle (3pt) node[align=center,above] (5){5};
\filldraw (2,-1) circle (3pt) node[align=center,below] (6){6};
\draw (0,0)--(2,0)--(2,-1)--(0,-1)--(0,0) (0,-1)--(1,0)--(1,-1)--(2,0);

\draw (0,-2.5) node (){\textcolor{red}6};
\draw (0,-3) node (){1};
\draw (0.5,-3) node (){2};
\draw (1,-3) node (){3};
\draw (1.5,-3) node (){4};
\draw (2,-3) node (){5};
\node at (1,-4) () {$q^2[3]_q[2]_q^3$};
\node at (1,-5) () {$i=1$};
\node at (1,-6) () {$q^2[3]_q[2]_q^3$};
\draw (-0.25,-3.25)--(2.25,-3.25) (-0.25,-2.75)--(2.25,-2.75) (-0.25,-2.25)--(0.25,-2.25) (-0.25,-3.25)--(-0.25,-2.25) (0.25,-3.25)--(0.25,-2.25) (0.75,-3.25)--(0.75,-2.75) (1.25,-3.25)--(1.25,-2.75) (1.75,-3.25)--(1.75,-2.75) (2.25,-3.25)--(2.25,-2.75);

\draw (3.5,0) node (){4};
\draw (4,0) node (){\textcolor{red}6};
\draw (3.5,-0.5) node (){1};
\draw (4,-0.5) node (){2};
\draw (4.5,-0.5) node (){3};
\draw (5,-0.5) node (){5};
\node at (4.25,-1.5) () {$\frac{q^3[2]_q^3(q^2+1)}{[3]_q}$};
\draw (3.25,-0.75)--(5.25,-0.75) (3.25,-0.25)--(5.25,-0.25) (3.25,0.25)--(4.25,0.25) (3.25,-0.75)--(3.25,0.25) (3.75,-0.75)--(3.75,0.25) (4.25,-0.75)--(4.25,0.25) (4.75,-0.75)--(4.75,-0.25) (5.25,-0.75)--(5.25,-0.25);

\draw (3.5,-2.5) node (){5};
\draw (4,-2.5) node (){\textcolor{red}6};
\draw (3.5,-3) node (){1};
\draw (4,-3) node (){2};
\draw (4.5,-3) node (){3};
\draw (5,-3) node (){4};
\draw (3.25,-3.25)--(5.25,-3.25) (3.25,-2.75)--(5.25,-2.75) (3.25,-2.25)--(4.25,-2.25) (3.25,-3.25)--(3.25,-2.25) (3.75,-3.25)--(3.75,-2.25) (4.25,-3.25)--(4.25,-2.25) (4.75,-3.25)--(4.75,-2.75) (5.25,-3.25)--(5.25,-2.75);

\node at (4.25,-4) () {$\frac{q^3[2]_q^5}{[3]_q}$};
\node at (4.25,-5) () {$i=2$};
\node at (4.25,-6) () {$2q^3[2]_q^3$};

\draw (6.5,-2.5) node (){4};
\draw (7,-2.5) node (){5};
\draw (7.5,-2.5) node (){\textcolor{red}6};
\draw (6.5,-3) node (){1};
\draw (7,-3) node (){2};
\draw (7.5,-3) node (){3};
\node at (7,-4) () {$q^3[3]_q[2]_q$};
\node at (7,-5) () {$i=3$};
\node at (7,-6) () {$q^3[3]_q[2]_q$};
\draw (6.25,-3.25)--(7.75,-3.25) (6.25,-2.75)--(7.75,-2.75) (6.25,-2.25)--(7.75,-2.25) (6.25,-3.25)--(6.25,-2.25) (6.75,-3.25)--(6.75,-2.25) (7.25,-3.25)--(7.25,-2.25) (7.75,-3.25)--(7.75,-2.25);

\draw (9,0) node (){4};
\draw (9,-0.5) node (){1};
\draw (9.5,-0.5) node (){2};
\draw (10,-0.5) node (){3};
\draw (10.5,-0.5) node (){5};
\draw (11,-0.5) node (){\textcolor{red}6};
\node at (10,-1.5) () {$\frac{q^2[5]_q[2]_q(q^2+1)}{[3]_q}$};
\draw (8.75,-0.75)--(11.25,-0.75) (8.75,-0.25)--(11.25,-0.25) (8.75,0.25)--(9.25,0.25) (8.75,-0.75)--(8.75,0.25) (9.25,-0.75)--(9.25,0.25) (9.75,-0.75)--(9.75,-0.25) (10.25,-0.75)--(10.25,-0.25) (10.75,-0.75)--(10.75,-0.25) (11.25,-0.75)--(11.25,-0.25);

\draw (9,-2.5) node (){5};
\draw (9,-3) node (){1};
\draw (9.5,-3) node (){2};
\draw (10,-3) node (){3};
\draw (10.5,-3) node (){4};
\draw (11,-3) node (){\textcolor{red}6};
\node at (10,-4) () {$\frac{q^2[5]_q[2]_q^3}{[3]_q}$};
\node at (10,-5) () {$i=5$};
\node at (10,-6) () {$2q^2[5]_q[2]_q$};
\draw (8.75,-3.25)--(11.25,-3.25) (8.75,-2.75)--(11.25,-2.75) (8.75,-2.25)--(9.25,-2.25) (8.75,-3.25)--(8.75,-2.25) (9.25,-3.25)--(9.25,-2.25) (9.75,-3.25)--(9.75,-2.75) (10.25,-3.25)--(10.25,-2.75) (10.75,-3.25)--(10.75,-2.75) (11.25,-3.25)--(11.25,-2.75);

\draw (12.5,-3) node (){1};
\draw (13,-3) node (){2};
\draw (13.5,-3) node (){3};
\draw (14,-3) node (){4};
\draw (14.5,-3) node (){5};
\draw (15,-3) node (){\textcolor{red}6};
\node at (13.75,-4) () {$[6]_q[2]_q^4$};
\node at (13.75,-5) () {$i=6$};
\node at (13.75,-6) () {$[6]_q[2]_q^4$};
\draw (12.25,-3.25)--(15.25,-3.25) (12.25,-2.75)--(15.25,-2.75) (12.25,-3.25)--(12.25,-2.75) (12.75,-3.25)--(12.75,-2.75) (13.25,-3.25)--(13.25,-2.75) (13.75,-3.25)--(13.75,-2.75) (14.25,-3.25)--(14.25,-2.75) (14.75,-3.25)--(14.75,-2.75) (15.25,-3.25)--(15.25,-2.75);
\end{tikzpicture}$$
\caption{\label{fig:tabsums} The tableaux of $\text{SYT}(G)$ organized by the column $i$ of entry $6$.}
\end{figure}

\begin{proof}
The left hand side is $e_i(F_G(q))_{i,1}$ and the right hand side is $e_i(T_G(q))_{i,1}$. 
\end{proof}

\begin{corollary}
\label{cor:polys}
Let $G=([n],E)$ be an NUIG. Then for every partition $\lambda\vdash n$ and integer $1\leq i\leq n$, the sum
\begin{equation*}
c^{(i)}_\lambda(G)=\sum_{\substack{T\in\text{SYT}(G)\\ \text{shape}(T)=\lambda\\n\text{ in column }i}}c_T(G)
\end{equation*}
is a polynomial in the variable $q$ and it is divisible by $[i]_q$.
\end{corollary}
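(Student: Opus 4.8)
The plan is to obtain Corollary~\ref{cor:polys} by extracting coefficients from the symmetric-function identity \eqref{eq:pieces} of Corollary~\ref{cor:pieces}. The point is that the left-hand side of \eqref{eq:pieces} is manifestly a symmetric function whose $e$-coefficients are \emph{polynomials} in $q$ with integer coefficients, being the finite sum $\sum_{\mathcal F}\text{sign}(\mathcal F)q^{\text{weight}(\mathcal F)}e_{\text{type}(\mathcal F)}$ of terms $\pm q^{\text{weight}(\mathcal F)}e_{\text{type}(\mathcal F)}$, whereas the right-hand side equals $\frac{1}{[i]_q}\sum_{T}c_T(G)e_{\text{shape}(T)}$, the sum being over $T\in\text{SYT}(G)$ with $n$ in column $i$ (recall $G+K_1=G$, so $c^{(i)}_\lambda$ really is computed from tableaux of $G$ itself).

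First I would observe that both sides of \eqref{eq:pieces} are homogeneous of degree $n$: every forest triple of $G$ has $|\text{type}(\mathcal F)|=n$, and every $T\in\text{SYT}(G)$ has $|\text{shape}(T)|=n$. Hence both sides lie in the span of $\{e_\lambda:\lambda\vdash n\}$, and since these $e_\lambda$ are linearly independent I may compare the coefficient of a fixed $e_\lambda$ on each side. On the left this coefficient is $\sum_{\mathcal F}\text{sign}(\mathcal F)q^{\text{weight}(\mathcal F)}$, summed over the relevant forest triples $\mathcal F$ with $\text{type}(\mathcal F)=\lambda$; on the right it is exactly $\frac{1}{[i]_q}c^{(i)}_\lambda(G)$, by definition of $c^{(i)}_\lambda(G)$. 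This yields the scalar identity
\[
c^{(i)}_\lambda(G)=[i]_q\sum_{\substack{\mathcal F\in\text{FT}(G)\\ \alpha^{(1)}_1=i,\ r_1=1,\ \text{type}(\mathcal F)=\lambda}}\text{sign}(\mathcal F)q^{\text{weight}(\mathcal F)}.
\]
Since the sum on the right is a finite $\mathbb{Z}$-linear combination of powers of $q$, it is a polynomial in $q$; therefore $c^{(i)}_\lambda(G)$ is a polynomial in $q$ and is divisible by $[i]_q$, as claimed.

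I do not expect any genuine obstacle here: the substance has already been packaged into Corollary~\ref{cor:pieces}, which itself rests on the identification $F_G(q)=T_G(q)$ from Theorem~\ref{thm:samemats}, so only bookkeeping remains. The one step worth stating carefully is the passage from the symmetric-function identity \eqref{eq:pieces} to the scalar identity above, which uses linear independence of the $e_\lambda$ with $\lambda\vdash n$. It may also be worth remarking, in the spirit of Figure~\ref{fig:tabsums}, that the individual summands $c_T(G)$ are genuinely rational functions of $q$, and it is only after grouping the tableaux by shape $\lambda$ and by the column $i$ of the entry $n$ that all denominators cancel except for the single factor $[i]_q$.
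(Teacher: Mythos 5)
Your proposal is correct and follows essentially the same route as the paper, whose proof of this corollary is precisely "take the coefficient of $e_\lambda$ in \eqref{eq:pieces}"; you have merely spelled out the bookkeeping (homogeneity in degree $n$, linear independence of the $e_\lambda$, and the fact that the forest-triple side is visibly a $\mathbb{Z}$-linear combination of powers of $q$). No gaps.
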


\begin{proof}
Take the coefficient of $e_\lambda$ in \eqref{eq:pieces}.
\end{proof}

\begin{example}
Figure \ref{fig:tabsums} shows an NUIG $G$ and the standard Young tableaux of $\text{SYT}(G)$, organized by the column $i$ in which the largest entry $6$ appears. Although some of the $c_T(G)$ are not polynomials, the sums for each $i$ are polynomials and are divisible by $[i]_q$.
\end{example}

\section{Gluing the first and last vertices}
\label{section:traceresult}

In this section, we show that we can use our matrix $F_G$ to calculate the chromatic symmetric function $X_{G^\circ}(\bm x)$ of the graph $G^\circ$ obtained by gluing vertices $1$ and $n$ of $G$ together.

\begin{definition}
Let $G=([n],E)$ be a graph with $n\geq 2$ vertices. We define the graph
\begin{equation*}
G^\circ=([n-1],\{\{i,j\}\in E: \ 1\leq i<j\leq n-1\}\cup\{\{i,1\}: \ \{i,n\}\in E\}).
\end{equation*}
\end{definition}

\begin{example}
Figure \ref{fig:trace:G0} shows some examples of this graph $G^\circ$. It will be convenient for our proof to allow $G^\circ$ to have multiple edges between the same vertices. Note that multiple edges do not affect the chromatic symmetric function because they do not affect the notion of a proper colouring.
\end{example}
\begin{figure}

$$\begin{tikzpicture}
\node at (-2,-0.866) {$G=$};

    \vertex[red]{1}{(0,0)};
\vertex{2}{(-0.5,0.866)};
\vertex{3}{(-1,0)};
\vertex[black][-3pt][left]{4}{(-0.5,-0.866)};
\vertex[black][-5pt][225]{5}{(-0.5,-1.866)};
\vertex[black][-3pt][270]{6}{(0.5,-1.866)};
\vertex{7}{(0.5,-0.866)};
\vertex[black][-5pt][315]{8}{(1.5,-1.866)};
\vertex[black][-5pt][315]{9}{(1.5,-0.866)};
\vertex[black][-5pt][315]{10}{(2.5,-0.866)};
\vertex[black][-3pt][0]{11}{(3,0)};
\vertex{12}{(2.5,0.866)};
\vertex{13}{(1.5,0.866)};
\vertexl[red][-3.4pt][-3pt]{14}{(1,0)};

\draw (v1)--(v2)--(v3)--(v4)--(v1)--(v3) (v2)--(v4)--(v5)--(v6)--(v7)--(v4)--(v6) (v5)--(v7)--(v8)--(v6) (v7)--(v9)--(v8) (v8)--(v10)--(v9) (v10)--(v11)--(v12)--(v13)--(v14)--(v9)--(v11)--(v13)--(v9)--(v12)--(v14)--(v10)--(v13) (v10)--(v12) (v14)--(v11);

\draw (8.5,0)--(7.5,0.866)--(7,0)--(7.5,-0.866)--(8.5,0) (8.5,0)--(7,0) (7.5,0.866)--(7.5,-0.866)--(8.5,-1.866) (7.5,-0.866)--(9.5,-0.866)--(8.5,0) (7.5,-1.866)--(8.5,-0.866) (8.5,-1.866)--(9.5,-1.866) (7.5,-0.866)--(7.5,-1.866)--(8.5,-1.866)--(8.5,-0.866)--(9.5,-1.866)--(9.5,-0.866)--(10.5,-0.866)--(11,0)--(10.5,0.866)--(9.5,0.866)--(8.5,0) (9.5,-1.866)--(10.5,-0.866) (9.5,-0.866)--(11,0)--(9.5,0.866)--(9.5,-0.866) (10.5,-0.866)--(10.5,0.866)--(8.5,0)--(10.5,-0.866) (10.5,-0.866)--(9.5,0.866) (10.5,0.866)--(9.5,-0.866) (8.5,0)--(11,0);

\vertex[red]{1}[(8,0)]{(0.5,0)};
\vertex{2}[(8,0)]{(-0.5,0.866)};
\vertex{3}[(8,0)]{(-1,0)};
\vertex[black][-3pt][left]{4}[(8,0)]{(-0.5,-0.866)};
\vertex[black][-5pt][225]{5}[(8,0)]{(-0.5,-1.866)};
\vertex[black][-3pt][270]{6}[(8,0)]{(0.5,-1.866)};
\vertex{7}[(8,0)]{(0.5,-0.866)};
\vertex[black][-5pt][315]{8}[(8,0)]{(1.5,-1.866)};
\vertex[black][-5pt][315]{9}[(8,0)]{(1.5,-0.866)};
\vertex[black][-5pt][315]{10}[(8,0)]{(2.5,-0.866)};
\vertex[black][-3pt][0]{11}[(8,0)]{(3,0)};
\vertex{12}[(8,0)]{(2.5,0.866)};
\vertex{13}[(8,0)]{(1.5,0.866)};

\node at (6,-0.866) {$G^\circ=$};

    \tikzmath{\d1 = 1; \h1 = 7; \dy = -3.5; \sqtb2 = {sqrt(3)/2};}
    \node at ({-1-\d1},\dy) {$G=$};
    \vertex[red]{1}[(-1,\dy)]{(0,0.5)};
    \vertex[black][-3pt][270]{2}[(-1,\dy)]{(0,-0.5)};
    \vertex[black][-3pt][90]{3}[(-1,\dy)]{(\sqtb2,0)};
    \vertex[black][-3pt][90]{4}[(-1,\dy)]{(\sqtb2*2,0.5)};
    \vertex[red][-3pt][270]{5}[(-1,\dy)]{(\sqtb2*2,-0.5)};
    \draw (v3)--(v2)--(v1)--(v3)--(v4)--(v5)--(v3);
    \node at (0+\h1-\d1,\dy) {$G^\circ=$};
    \vertex[red]{1}[(\h1,\dy)]{(\sqtb2,0.5)};
    \vertex[black][-3pt][270]{2}[(\h1,\dy)]{(0,0)};
    \vertex[black][-3pt][270]{3}[(\h1,\dy)]{(\sqtb2,-0.5)};
    \vertex[black][-3pt][270]{4}[(\h1,\dy)]{(\sqtb2*2,0)};
    \draw (v3)--(v2)--(v1) (v3)--(v4)--(v1);
    \draw (v3) to[out=60, in=-60] (v1);
    \draw (v3) to[out=120, in=-120] (v1);
\end{tikzpicture}$$
\caption{The graph $G^\circ$ obtained by gluing the first and last vertices of $G$. In the second example, these vertices have a common neighbour in $G$, so we get the multiple edge $\{1,3\}$ in $G^\circ$.}
\label{fig:trace:G0}
\end{figure}
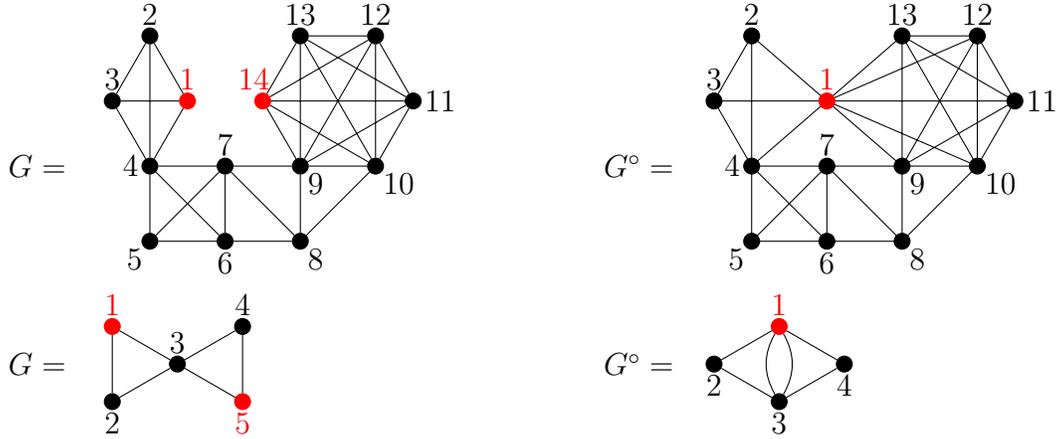

We now state the main result of this section. Note that by Proposition \ref{prop:ftmatrixzeroes}, we have $(F_G)_{k,k}=0$ for $k\geq n$, so it makes sense to take the trace of $F_G$.

\begin{theorem}\label{thm:trace:trace}
Let $G=([n],E)$ be a graph with $n\geq 2$ vertices. Then we have
\begin{equation*}
X_{G^\circ}(\bm x)=\text{trace}(F_G)=\sum_{k=1}^{n-1}(F_G)_{k,k}.
\end{equation*}
\end{theorem}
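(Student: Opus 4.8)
The plan is to prove the identity via an explicit sign-preserving bijection of the ``break'' type of Proposition~\ref{prop:ftmult}. By the forest-triple expansion, $X_{G^\circ}(\bm x)=\sum_{\mathcal F^\circ\in\text{FT}(G^\circ)}\text{sign}(\mathcal F^\circ)e_{\text{type}(\mathcal F^\circ)}$, whereas $\text{trace}(F_G)=\sum_{k=1}^{n-1}\sum_{\mathcal F\in\text{FT}^{(k)}(G+P_k)}\text{sign}(\mathcal F)e_{\text{type}'(\mathcal F)}$, the second sum being finite by Proposition~\ref{prop:ftmatrixzeroes}. So it is enough to construct a bijection $\Phi\colon\bigsqcup_{k=1}^{n-1}\text{FT}^{(k)}(G+P_k)\to\text{FT}(G^\circ)$ with $e_{\text{type}(\Phi(\mathcal F))}=e_{\text{type}'(\mathcal F)}$ and $\text{sign}(\Phi(\mathcal F))=\varepsilon\,\text{sign}(\mathcal F)$ for a fixed sign $\varepsilon$. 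One first fixes an edge ordering on $G^\circ$ compatible with the chosen ordering on $G$: edges of $G^\circ$ coming from edges of $G$ not incident to $n$ keep their order, and each ``wrap-around'' edge $\{i,1\}$, the image of an edge $\{i,n\}$ of $G$, is inserted where $\{i,n\}$ sat; this is what makes no-broken-circuit trees transport correctly between the two graphs.

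The map $\Phi$ folds $G+P_k$ back up by identifying vertex $1$ with vertex $n$ and erasing the synthetic path $P_k$. Given $\mathcal F\in\text{FT}^{(k)}(G+P_k)$, let $\mathcal T=(T,\alpha,r)$ be the tree triple containing vertex $1$ (so $\alpha_1=k$ and $r=1$) and $\mathcal T'=(T',\alpha',r')$ the one containing vertex $n$ (so $\{n,\ldots,n+k-1\}\subseteq T'$ and $\alpha'_{\ell(\alpha')}\geq k$). In the principal case $\mathcal T\neq\mathcal T'$, delete the path vertices $n+1,\ldots,n+k-1$ from $T'$ to obtain a tree $\widehat{T'}\ni n$, glue vertex $n$ of $\widehat{T'}$ to vertex $1$ of $T$ to obtain a tree $T^*$ of $G^\circ$, equip $T^*$ with the composition $\alpha'\,\alpha_2\alpha_3\cdots\alpha_{\ell(\alpha)}$ obtained by concatenating $\alpha'$ with $\alpha$ minus its first part $k$, and with root $r'$, and keep all other tree triples of $\mathcal F$ unchanged. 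The verifications mirror those of Proposition~\ref{prop:ftmult}: the sizes match, the new first part is $\alpha'_1$ so $r'$ is still a valid root, exactly one part equal to $k$ has been discarded so $e_{\text{type}(\Phi(\mathcal F))}=e_{\text{type}'(\mathcal F)}$, and the sign is unchanged because two tree triples of composition lengths $\ell(\alpha)$ and $\ell(\alpha')$ are replaced by a single one of length $\ell(\alpha)-1+\ell(\alpha')$. The inverse cuts $T^*$ at the glued vertex, using the $G$-origin of each incident edge to decide on which side it goes, recreates $P_k$, and splits the composition by the minimal-$t$ rule of Proposition~\ref{prop:ftmult}.

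The case $\mathcal T=\mathcal T'$ is where the real work lies, and I expect it to be the main obstacle. Then the single tree $T=T'$ contains vertex $1$, vertex $n$, and the path $P_k$; after erasing $n+1,\ldots,n+k-1$ and identifying $1$ with $n$, the unique $1$--$n$ path of $T$ closes into a cycle, so the naive $T^*$ is not a tree. The remedy is to delete from that cycle the single edge forced by the no-broken-circuit condition on $G^\circ$ — this is precisely where the compatible edge order is used — which turns $T^*$ into a genuine NBC tree and flips the sign, consistently with the fact that closing a path into a cycle contributes an extra factor of the shape $(\alpha_1-1)$. One then has to match the rest: which part of the composition is charged against the deleted edge, and how the root of $T^*$ is reconciled with the forced value $r=1$ on $\mathcal T$ together with the free root choices available on the $G+P_k$ side; verifying that $\Phi$ on this regime is a bijection onto the forest triples of $G^\circ$ whose distinguished tree wraps around the glued vertex, compatibly with $\text{type}$ and $\text{sign}$, is the heart of the argument. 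As a consistency check, when $G=P_n$ one has $G^\circ=C_{n-1}$, and combining Proposition~\ref{prop:ftpn} with the observation that summing $(F_{P_n})_{k,k}$ over $k$ converts the constraint $\alpha_{\ell(\alpha)}\geq k$ into a factor of $\alpha_{\ell(\alpha)}$, and then reversing each composition, recovers the known expansion~\eqref{eq:xqcn} of $X_{C_{n-1}}(\bm x)$ at $q=1$.
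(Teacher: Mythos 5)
Your reduction of the theorem to a sign- and type-compatible bijection $\Phi\colon\bigsqcup_{k}\text{FT}^{(k)}(G+P_k)\to\text{FT}(G^\circ)$ is the right shape of statement, but the construction has two genuine gaps, one of which you acknowledge and one of which you do not. First, the case $\mathcal T=\mathcal T'$ is not actually handled: you describe the intended mechanism (``delete the edge forced by the no-broken-circuit condition, which flips the sign'') and then concede that matching the compositions, the roots, and bijectivity ``is the heart of the argument.'' Moreover the mechanism as sketched cannot work inside the forest-triple framework, because $\text{sign}(\mathcal F)=(-1)^{\sum_i(\ell(\alpha^{(i)})-1)}$ depends only on the compositions and not on the edge set, so deleting an edge of the created cycle does not change the sign at all; and a sign flip in this regime but not in the regime $\mathcal T\neq\mathcal T'$ would be incompatible with a single global sign $\varepsilon$, so what is really needed there is cancellation, not a bijection. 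Second, even your ``principal'' case $\mathcal T\neq\mathcal T'$ is not well defined: gluing two NBC trees of $G$ at the identified vertex can create a broken circuit of $G^\circ$. Concretely, take $G=P_4$, so $G^\circ=C_3$, and $k=1$: the forest triple with tree triples $(\{1,2\},(1,1),1)$ and $(\{3,4\},(2),r')$ lies in $\text{FT}^{(1)}(G+P_1)$, but the glued tree on $\{1,2,3\}$ has edges $\{1,2\}$ and $\{1,3\}$, which under the lexicographic order is exactly the broken circuit of the triangle; checking the other two orderings of $E(C_3)$ produces analogous failures, so no choice of ``compatible'' edge ordering transports NBC trees, and these spurious terms must again be cancelled rather than matched.

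This is precisely why the paper does not work with forest triples directly. It enlarges them to \emph{subgraph triples}, in which the trees are replaced by arbitrary connected subgraphs and the sign also records the excess of edges over a spanning tree; Proposition~\ref{prop:subgraphtriples} shows by a sign-reversing involution (add or remove the largest edge completing a broken circuit) that subgraph triples compute the same sums as forest triples. In that setting the gluing map $\psi$ between $\text{ST}(G^\circ)$ and $\bigsqcup_k\text{ST}^{(k)}(G+P_k)$ really is a bijection, because connected subgraphs, unlike NBC trees, are stable under identifying vertices $1$ and $n$: the unicyclic configuration that defeats your construction is simply allowed, and the extra cycle is compensated in the sign by the extra composition part, so both of your problematic phenomena disappear. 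If you want to salvage a direct forest-triple argument, you would need to add a sign-reversing involution cancelling the non-NBC gluings and the one-tree wrap-around objects, which in effect reconstructs the subgraph-triple device; as it stands, your proposal proves the easy bookkeeping but leaves the actual content of Theorem~\ref{thm:trace:trace} unestablished. (Your consistency check $G=P_n$, $G^\circ=C_{n-1}$ is correct, and indeed appears in the paper as Corollary~\ref{thm:trace:cg}, but it does not substitute for the missing argument.)
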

Before we prove Theorem \ref{thm:trace:trace}, we will discuss some examples and applications.

\begin{example}
For the bowtie graph $G$, we calculated the matrix $F_G$ in Example \ref{ex:bowtiematrix}, so we can calculate the chromatic symmetric function of $G^\circ$, shown in the bottom right of Figure~\ref{fig:trace:G0}, as
\begin{equation*}
X_{G^\circ}(\bm x)=\text{trace}(F_G)=2e_{31}+16e_4.
\end{equation*}
\end{example}

\begin{corollary}\label{cor:circulararcgraph}
If $G$ is an NUIG, then $X_{G^\circ}(\bm x)$ is $e$-positive.
\end{corollary}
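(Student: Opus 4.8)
The plan is simply to read off the result from the two facts already in hand. By Theorem~\ref{thm:trace:trace}, since any NUIG on which the construction $G^\circ$ makes sense has $n\geq 2$ vertices, we have
\[
X_{G^\circ}(\bm x)=\operatorname{trace}(F_G)=\sum_{k=1}^{n-1}(F_G)_{k,k}.
\]
So the chromatic symmetric function of $G^\circ$ is a sum of diagonal entries of the forest triple matrix of $G$.

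Next I would invoke Corollary~\ref{cor:ftmatrixpos}: because $G$ is an NUIG, \emph{every} entry of $F_G$ is $e$-positive, and in particular each diagonal entry $(F_G)_{k,k}$ is $e$-positive. A sum of $e$-positive symmetric functions is again $e$-positive (the $e$-coefficients just add), so the displayed expression for $X_{G^\circ}(\bm x)$ is $e$-positive, which is the claim. One small consistency check worth making explicit is that Corollary~\ref{cor:ftmatrixpos} concerns exactly $F_G$ (the $q=1$ object), and $X_{G^\circ}(\bm x)$ in Theorem~\ref{thm:trace:trace} is also the $q=1$ object, so there is no discrepancy; the presence of multiple edges in $G^\circ$ is harmless since those do not affect proper colourings, and this was already accounted for in Theorem~\ref{thm:trace:trace}.

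Honestly, there is no real obstacle here: the corollary is an immediate consequence of Theorem~\ref{thm:trace:trace} (the trace formula) together with Corollary~\ref{cor:ftmatrixpos} ($e$-positivity of $F_G$ for NUIGs), and all the work has been done upstream. The only additional remark one might append — not needed for the $e$-positivity statement itself — is that $G^\circ$ is a proper circular arc graph (wrap the unit-interval model of $G$ around a circle and identify the first and last intervals), so this provides support for the $q=1$ specialization of Ellzey's Conjecture~\ref{conj:circui}.
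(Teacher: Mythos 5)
Your proposal is correct and follows exactly the paper's own argument: the paper proves this corollary by combining Theorem~\ref{thm:trace:trace} with Corollary~\ref{cor:ftmatrixpos}, precisely as you do. Your extra remarks (the $q=1$ consistency and the harmlessness of multiple edges) are accurate but not needed.
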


\begin{proof}
This follows from Corollary \ref{cor:ftmatrixpos} and Theorem \ref{thm:trace:trace}.
\end{proof}

\begin{remark}
The graph $G^\circ$ arising from an NUIG $G$ in this way will be a \emph{proper circular arc graph}, so Corollary \ref{cor:circulararcgraph} lends support to Conjecture \ref{conj:circui}. 
\end{remark}

%

\begin{figure}
$$\begin{tikzpicture}
\node at (-2.75,0) (){$G^\circ=$};9
\draw (0,0) circle (1.5);
\draw (1.299,0.75) arc[start angle=150, end angle=210, radius=1.5cm];
\draw (-0.75,1.299) arc[start angle=240, end angle = 300,radius=1.5cm];
\draw (1.299,-0.75) arc[start angle=60, end angle=90, radius=5.6cm];
\filldraw (0,1.5) circle (3pt) node[align=center,above] (12){12};
\filldraw [color=red] (0.75,1.299) circle (3pt) node[align=center,above] (1){1};
\filldraw (1.299,0.75) circle (3pt) node[align=center,above] (2){2};
\filldraw (1.5,0) circle (3pt) node[align=center,right] (3){3};
\filldraw (1.299,-0.75) circle (3pt) node[align=center,below] (4){4};
\filldraw (0.75,-1.299) circle (3pt) node[align=center,below] (5){5};
\filldraw (0,-1.5) circle (3pt) node[align=center,below] (6){6};
\filldraw (-0.75,-1.299) circle (3pt) node[align=center,below] (7){7};
\filldraw (-1.299,-0.75) circle (3pt) node[align=center,below] (8){8};
\filldraw (-1.5,0) circle (3pt) node[align=center,left] (9){9};
\filldraw (-1.299,0.75) circle (3pt) node[align=center,above] (10){10};
\filldraw (-0.75,1.299) circle (3pt) node[align=center,above] (11){11};
\node at (-12,0) (){$G=$};
\draw (-11,0)--(-5,0) (-10,0)--(-9.5,0.866)--(-9,0)--(-9.5,-0.866)--(-9,-1.732)--(-8,-1.732)--(-7.5,-0.866)--(-8,0) (-6,0)--(-5.5,0.866)--(-5,0);
\filldraw [color=red] (-11,0) circle (3pt) node[align=center,above] (1){1};
\filldraw (-10,0) circle (3pt) node[align=center,above] (2){2};
\filldraw (-9.5,0.866) circle (3pt) node[align=center,above] (3){3};
\filldraw (-9,0) circle (3pt) node[align=center,above] (4){4};
\filldraw (-9.5,-0.866) circle (3pt) node[align=center,left] (5){5};
\filldraw (-9,-1.732) circle (3pt) node[align=center,below] (6){6};
\filldraw (-8,-1.732) circle (3pt) node[align=center,below] (7){7};
\filldraw (-7.5,-0.866) circle (3pt) node[align=center,right] (8){8};
\filldraw (-8,0) circle (3pt) node[align=center,above] (9){9};
\filldraw (-7,0) circle (3pt) node[align=center,above] (10){10};
\filldraw (-6,0) circle (3pt) node[align=center,above] (11){11};
\filldraw (-5.5,0.866) circle (3pt) node[align=center,above] (12){12};
\filldraw [color=red] (-5,0) circle (3pt) node[align=center,above] (13){13};
\end{tikzpicture}$$
\caption{\label{fig:cyclechord} The graph $(P_2+C_3+P_1+C_6+P_3+C_3)^\circ$ is a cycle $C_{12}$ with three noncrossing chords.}
\end{figure}
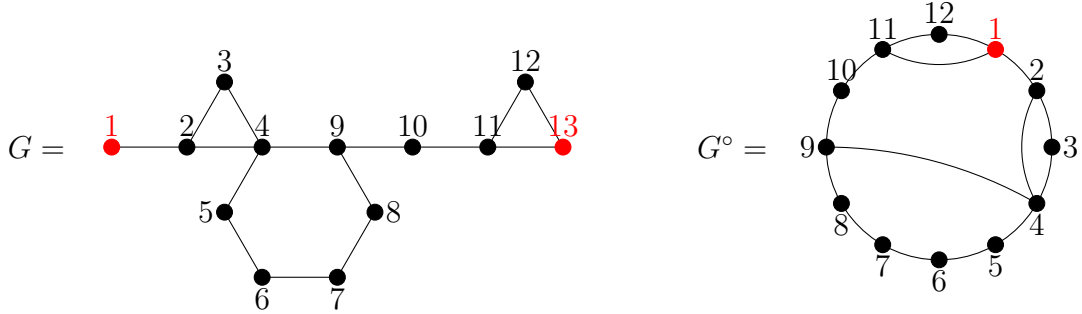

\begin{corollary}\label{cor:circuicycle}
If $G$ is a sum of NUIGs and cycles, then $X_{G^\circ}(\bm x)$ is $e$-positive.
\end{corollary}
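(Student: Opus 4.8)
The plan is to combine the trace formula of Theorem \ref{thm:trace:trace} with the multiplicativity of the forest triple matrix under gluing and the $e$-positivity of the matrices associated to the building blocks. Write $G = G_1 + G_2 + \cdots + G_m$, where each $G_t$ is either an NUIG or a cycle $C_{n_t}$ (with $m \geq 1$; if $G$ has no such decomposition, it is itself already an NUIG or a cycle and the statement is either Corollary \ref{cor:circulararcgraph} or a special case of it). We may also assume $G$ has $n \geq 2$ vertices, since otherwise $G^\circ$ is not defined.

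First I would invoke Proposition \ref{prop:ftmult} repeatedly (an induction on $m$) to obtain
\begin{equation*}
F_G = F_{G_1} F_{G_2} \cdots F_{G_m}.
\end{equation*}
Because each column of each $F_{G_t}$ has only finitely many nonzero entries (Proposition \ref{prop:ftmatrixzeroes}), these matrix products are well-defined, and so is $\operatorname{trace}(F_G)$ since $(F_G)_{k,k} = 0$ for $k \geq n$ by Proposition \ref{prop:ftmatrixzeroes}.

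Next I would argue that every entry of $F_G$ is $e$-positive. For each factor, this is Corollary \ref{cor:ftmatrixpos} when $G_t$ is an NUIG and Proposition \ref{prop:ftcn} when $G_t$ is a cycle. The key closure property is that the set of $e$-positive symmetric functions is closed under both addition and multiplication (a product $e_\lambda e_\mu = e_{\lambda \cup \mu}$ is again an $e_\nu$); hence any entry of a product of matrices with $e$-positive entries, being a finite sum of products of such entries, is itself $e$-positive. Applying this $(m-1)$ times to the factorization above shows that all entries of $F_G$ are $e$-positive. In particular the diagonal entries $(F_G)_{1,1}, \ldots, (F_G)_{n-1,n-1}$ are $e$-positive.

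Finally I would apply Theorem \ref{thm:trace:trace} to conclude
\begin{equation*}
X_{G^\circ}(\bm x) = \operatorname{trace}(F_G) = \sum_{k=1}^{n-1} (F_G)_{k,k},
\end{equation*}
which is a sum of $e$-positive symmetric functions, hence $e$-positive. I do not expect any real obstacle here: all the substantive work (the trace identity, the multiplicativity, and the $e$-positivity of $F$ for NUIGs and for cycles) is already in place, and the only things to be careful about are that the infinite matrix products and the trace are legitimate — which is exactly what Proposition \ref{prop:ftmatrixzeroes} guarantees — and that the closure of $e$-positivity under products is used, not just under sums.
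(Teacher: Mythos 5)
Your proposal is correct and is essentially the paper's argument: the paper proves this corollary in one line by citing Corollary \ref{cor:uicycle} together with Theorem \ref{thm:trace:trace}, and the content behind that citation is exactly what you spell out — the factorization $F_G=F_{G_1}\cdots F_{G_m}$ from Proposition \ref{prop:ftmult}, $e$-positivity of each factor via Corollary \ref{cor:ftmatrixpos} and Proposition \ref{prop:ftcn}, closure of $e$-positivity under sums and products, and finiteness of the relevant sums from Proposition \ref{prop:ftmatrixzeroes}. Your version is just a more explicit unpacking (usefully noting that one needs $e$-positivity of the diagonal entries of $F_G$, not merely of $X_G(\bm x)$), so there is nothing to correct.
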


\begin{proof}
This follows from Corollary \ref{cor:uicycle} and Theorem \ref{thm:trace:trace}.
\end{proof}

We can also recover some known $e$-positivity results. The \emph{twinned cycle graph} $\tilde C_n$ is the graph obtained by duplicating a vertex of the cycle $C_n$, namely
\begin{equation*}
\tilde C_n=([n+1],\{\{1,2\},\{2,3\}\ldots,\{n-1,n\},\{n,1\},\{n-1,n+1\},\{n,n+1\},\{n+1,1\}\}).
\end{equation*}

\begin{corollary}
\cite[Theorem 3.29]{graphtwin} The twinned cycle graph $\tilde C_n$ is $e$-positive.
\end{corollary}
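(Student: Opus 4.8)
The plan is to exhibit $\tilde C_n$ in the form $G^\circ$ for a graph $G$ that is a sum of NUIGs, and then quote Corollary~\ref{cor:circuicycle}.

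First I would propose $G=P_{n-1}+K_4'$, where $P_{n-1}$ is the path on $n-1$ vertices and $K_4'$ is the almost-complete graph on four vertices ($K_4$ with the edge $\{1,4\}$ deleted) appearing in Proposition~\ref{prop:ftkn}. Unwinding the definition of the sum, $G$ has $(n-1)+4-1=n+2$ vertices; the shifted copy of $K_4'$ occupies the last four vertices $\{n-1,n,n+1,n+2\}$ and contributes every edge among them except $\{n-1,n+2\}$, while $P_{n-1}$ contributes $\{1,2\},\{2,3\},\ldots,\{n-2,n-1\}$, so that
\begin{equation*}
E(G)=\big\{\{1,2\},\ldots,\{n-2,n-1\}\big\}\cup\big\{\{n-1,n\},\{n-1,n+1\},\{n,n+1\},\{n,n+2\},\{n+1,n+2\}\big\}.
\end{equation*}

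Next I would apply the definition of $(\,\cdot\,)^\circ$ to $G$: we identify vertices $1$ and $n+2$, keep every edge of $G$ among $\{1,\ldots,n+1\}$, and for each edge $\{i,n+2\}\in E(G)$ --- here $i\in\{n,n+1\}$ --- adjoin the edge $\{i,1\}$. This yields
\begin{equation*}
E(G^\circ)=\big\{\{1,2\},\ldots,\{n-1,n\},\ \{n-1,n+1\},\ \{n,n+1\},\ \{1,n\},\ \{1,n+1\}\big\},
\end{equation*}
which is exactly $E(\tilde C_n)$; hence $\tilde C_n=G^\circ$. (For $n=3$ this degenerates to $\tilde C_3=K_4=(P_2+K_4')^\circ$, which I would record separately.)

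Finally, a path is an NUIG, and $K_4'$ is an NUIG because in any triple $i<j<k$ from $\{1,2,3,4\}$ the middle vertex satisfies $1<j<4$, so the two edges $\{i,j\},\{j,k\}$ that the NUIG criterion of Section~\ref{section:background} demands are never the deleted edge $\{1,4\}$. Thus $G=P_{n-1}+K_4'$ is a sum of NUIGs, and Corollary~\ref{cor:circuicycle} gives that $X_{\tilde C_n}(\bm x)=X_{G^\circ}(\bm x)$ is $e$-positive. There is no substantive obstacle here; the only care needed is in tracking the relabelling induced by the sum and the $n=3$ boundary case. One might also remark that this realizes $\tilde C_n$ as a proper circular arc graph (cf.\ the remark after Corollary~\ref{cor:circulararcgraph}), even though $\tilde C_n$ is \emph{not} a cycle with noncrossing chords, so Corollary~\ref{cor:noncrossingcyclechord} alone does not cover it.
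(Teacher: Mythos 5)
Your proof is correct and takes essentially the same route as the paper: realize $\tilde C_n$ as $G^\circ$ for $G$ a path glued to $K_4'$ (a sum of NUIGs) and then invoke the $e$-positivity of $\operatorname{trace}(F_G)$ via Corollary~\ref{cor:circuicycle} (the paper cites Corollary~\ref{cor:circulararcgraph}, which applies equally since a sum of NUIGs is an NUIG). In fact, with the paper's stated definition of $\tilde C_n$ on $n+1$ vertices, your choice $P_{n-1}+K_4'$ (on $n+2$ vertices) is the consistent one, whereas the paper's printed $P_{n-2}+K_4'$ yields a graph on $n$ vertices, namely $\tilde C_{n-1}$, so your careful edge-by-edge check corrects an off-by-one in the displayed proof without changing the argument.
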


\begin{proof}
We have $\tilde C_n=(P_{n-2}+K_4')^\circ$, so the result follows from Corollary \ref{cor:circulararcgraph}.
\end{proof}

A \emph{cycle-chord graph} is a graph of the form $C_n\cup\{a,1\}$ obtained by adding some edge $\{a,1\}$ to the cycle $C_n$.

\begin{corollary}
    \cite[Corollary 3.4]{cyclechordsepositive} Cycle-chord graphs are $e$-positive.
\end{corollary}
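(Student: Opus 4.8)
The plan is to exhibit every cycle-chord graph as $G^\circ$ for a graph $G$ that is a sum of an NUIG and a cycle, and then quote Corollary \ref{cor:circuicycle}.

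First I would reduce to a normal form. Up to isomorphism a cycle-chord graph is $C_n\cup\{1,a\}$: the chord joins two vertices of $C_n$ whose two arcs have lengths $a-1$ and $n-a+1$ for some $1\le a\le n$, and rotating the cycle we may place one endpoint at vertex $1$. If the ``chord'' is a loop ($a=1$) or coincides with a cycle edge ($a\in\{2,n\}$), then the graph has the same chromatic symmetric function as $C_n$, which is $e$-positive by \eqref{eq:xqcn}; so I may assume $3\le a\le n-1$, and in particular $C_{n-a+2}$ is a genuine cycle.

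The key step is the identity
\begin{equation*}
(P_a+C_{n-a+2})^\circ=C_n\cup\{1,a\},
\end{equation*}
which I would verify directly from the definitions of $+$ and of $(\,\cdot\,)^\circ$. Writing $j=a$ and $m=n-a+2$, the graph $P_j+C_m$ has vertex set $[j+m-1]$, a path $1-2-\cdots-j$, and a cycle on the vertices $j,j+1,\ldots,j+m-1$ whose two edges at $j$ are $\{j,j+1\}$ and $\{j,j+m-1\}$. Forming $(P_j+C_m)^\circ$ identifies vertex $1$ with vertex $j+m-1$; tracking the edges, the surviving edges are $\{1,2\},\ldots,\{j-1,j\}$ together with $\{j,j+1\},\ldots,\{j+m-3,j+m-2\}$, and the identification contributes the new edges $\{j,1\}$ and $\{j+m-2,1\}$. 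Since $j+m-2=n$, this edge set is exactly $\{1,2\},\{2,3\},\ldots,\{n-1,n\},\{n,1\}$ together with $\{1,a\}$, i.e.\ the cycle-chord graph $C_n\cup\{1,a\}$. (Conceptually: gluing the two ends of $P_a+C_{n-a+2}$ fuses it into a $C_a$ and a $C_{n-a+2}$ sharing one edge, which is precisely a cycle with one chord.)

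Finally, $P_a$ is an NUIG (a path) and $C_{n-a+2}$ is a cycle, so $G=P_a+C_{n-a+2}$ is a sum of NUIGs and cycles, and Corollary \ref{cor:circuicycle} gives that $X_{G^\circ}(\bm x)=X_{C_n\cup\{1,a\}}(\bm x)$ is $e$-positive. The only real work is the middle identity, and the only pitfalls there are keeping the order of the summands straight (gluing the ends of $C_m+P_j$, with the cycle first, instead yields a theta graph rather than a cycle-chord) and recalling that multiple edges do not affect $X_G(\bm x)$, which lets the degenerate cases go through unchanged.
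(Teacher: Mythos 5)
Your proof is correct and takes essentially the same route as the paper: the paper's proof is exactly the identification $C_n\cup\{a,1\}=(P_a+C_{n-a+2})^\circ$ followed by an appeal to Corollary \ref{cor:circuicycle}, and your explicit edge-by-edge verification of that identity (and the remark about degenerate chords) is just detail the paper leaves implicit.
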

\begin{proof}
We have $C_n\cup\{a,1\}=(P_a+C_{n-a+2})^\circ$, so the result follows from Corollary \ref{cor:circuicycle}.
\end{proof}

More generally, given $1\leq a_1<b_1\leq a_2<b_2\leq\cdots\leq a_m\leq n$, we can consider the \emph{noncrossing cycle-chord graph} $G$ obtained by adding the edges $\{a_1,b_1\},\ldots\{a_{m-1},b_{m-1}\},\{a_m,1\}$ to $C_n$. These chords can intersect at endpoints but do not cross. An example is given in Figure \ref{fig:cyclechord}. We obtain the following new $e$-positivity result.

\begin{corollary}\label{cor:noncrossingcyclechord}
    Noncrossing cycle-chord graphs are $e$-positive.
\end{corollary}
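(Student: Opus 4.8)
The plan is to exhibit every noncrossing cycle-chord graph $G$ in the form $H^\circ$, where $H$ is a sum of paths and cycles, and then apply Corollary \ref{cor:circuicycle}. Write $G$ as $C_n$ together with the chords $\{a_1,b_1\},\dots,\{a_{m-1},b_{m-1}\},\{a_m,1\}$ for some $1\le a_1<b_1\le a_2<b_2\le\cdots\le a_m\le n$, and set $b_0=1$, $b_m=n+1$. Let $H$ be the graph on $[n+1]$ obtained from the path $1-2-\cdots-(n+1)$ by adding the edges $\{a_i,b_i\}$ for $1\le i\le m$. The only edges of $H$ meeting the vertex $n+1$ are the path edge $\{n,n+1\}$ and the chord $\{a_m,n+1\}$, so gluing vertices $1$ and $n+1$ turns the path into $C_n$, leaves the chords $\{a_i,b_i\}$ with $i<m$ untouched (all their endpoints are $\le b_{m-1}\le a_m\le n$), and sends $\{a_m,n+1\}$ to $\{a_m,1\}$; thus $H^\circ=G$.

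It then remains to check that $H$ decomposes as the sum
\begin{equation*}
H=\big(P_{a_1-b_0+1}+C_{b_1-a_1+1}\big)+\big(P_{a_2-b_1+1}+C_{b_2-a_2+1}\big)+\cdots+\big(P_{a_m-b_{m-1}+1}+C_{b_m-a_m+1}\big).
\end{equation*}
This is a bookkeeping argument: the sum operation glues the last vertex of one factor to the first vertex of the next, so tracking partial sizes shows that the $i$-th path factor $P_{a_i-b_{i-1}+1}$ occupies the vertices $b_{i-1},b_{i-1}+1,\dots,a_i$ and the $i$-th cycle factor $C_{b_i-a_i+1}$ occupies $a_i,a_i+1,\dots,b_i$, each contributing the consecutive-pair edges inside its block, and the cycle factor additionally contributing the closing edge $\{a_i,b_i\}$. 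Since the ordering hypothesis says exactly that $b_0\le a_1\le b_1\le a_2\le\cdots\le a_m\le b_m$, these blocks are consecutive intervals covering $[1,n+1]$, so the collected consecutive-pair edges reassemble into the path $1-2-\cdots-(n+1)$ and the collected closing edges are precisely the $\{a_i,b_i\}$, as needed. Each $P_k$ is an NUIG and each $C_\ell$ a cycle, so Corollary \ref{cor:circuicycle} gives that $X_G(\bm x)=X_{H^\circ}(\bm x)$ is $e$-positive.

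The argument has no genuine obstacle beyond this indexing; the only thing left to dispose of is degeneracy. A path factor $P_{a_i-b_{i-1}+1}$ is the trivial one-vertex graph $P_1$ precisely when $b_{i-1}=a_i$, and $P_1+(\,\cdot\,)=(\,\cdot\,)$, so such factors may simply be discarded. A cycle factor $C_{b_i-a_i+1}$ degenerates to a $2$-vertex double edge exactly when $b_i=a_i+1$, equivalently when the chord $\{a_i,b_i\}$ duplicates an edge of $C_n$; since multiple edges do not affect the chromatic symmetric function, we may delete all such redundant chords at the outset and assume $b_i\ge a_i+2$ for every $i$ (in particular $a_m\le n-1$), after which every cycle factor is a genuine $C_\ell$ with $\ell\ge3$ and the decomposition above is a bona fide sum of NUIGs and cycles.
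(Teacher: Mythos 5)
Your proposal is correct and is essentially the paper's proof: with $b_0=1$, $b_m=n+1$ your decomposition $H=P_{a_1-b_0+1}+C_{b_1-a_1+1}+\cdots+P_{a_m-b_{m-1}+1}+C_{b_m-a_m+1}$ is exactly the paper's $G=(P_{a_1}+C_{b_1-a_1+1}+P_{a_2-b_1+1}+\cdots+P_{a_m-b_{m-1}+1}+C_{n-a_m+2})^\circ$, followed by the same appeal to Corollary \ref{cor:circuicycle}. Your extra bookkeeping on the indexing and the degenerate cases ($P_1$ factors and chords duplicating cycle edges) only spells out details the paper leaves implicit.
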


\begin{proof}
We have
\begin{equation*}
G=(P_{a_1}+C_{b_1-a_1+1}+P_{a_2-b_1+1}+\cdots+P_{a_m-b_{m-1}+1}+C_{n-a_m+2})^\circ,
\end{equation*}
so the result follows from Corollary \ref{cor:circuicycle}. \end{proof}
\begin{remark} Figure~\ref{fig:trace:c6intersectingchord} shows that some cycle graphs with crossing chords are not $e$-positive.
\end{remark}

\begin{figure}
    \begin{tikzpicture}
    \tikzmath{\sqtbt={sqrt(3)/2};};
    \vertex{1}{(0,0)};
    \vertex{2}{(0.5,\sqtbt)};
    \vertex{3}{(1.5,\sqtbt)};
    \vertex{4}{(2,0)};
    \vertex[black][-3pt][270]{5}{(1.5,-\sqtbt)};
    \vertex[black][-3pt][270]{6}{(0.5,-\sqtbt)};
    \draw (v1)--(v2)--(v3)--(v4)--(v5)--(v6)--(v1)--(v4) (v2)--(v5);
    \end{tikzpicture}
    \begin{equation*}
        X_G(\bm x)=2e_{222}\color{red}-6e_{33}\color{black}+26e_{42}+28e_{51}+102e_6
    \end{equation*}
    \caption{\label{fig:trace:c6intersectingchord} The cycle $C_6$ with crossing chords $\{1,4\}$ and $\{2,5\}$ is not $e$-positive.}
\end{figure}

To prove Theorem \ref{thm:trace:trace}, it will be convenient to consider more general objects than forest triples. We will show that they can also be used to calculate the chromatic symmetric function and they will be easier to use in our bijection.
\begin{definition}
A \emph{connected subgraph triple of $G$} is an object $\ctrip=(H,\alpha,r)$ consisting of the following data.
\begin{itemize}
\item $H$ is a connected subgraph of $G$.
\item $\alpha$ is a composition of size $|H|$.
\item $r$ is an integer with $1\leq r\leq\alpha_1$.
\end{itemize}
A \emph{subgraph triple of $G$} is a sequence $\strip=(\ctrip_1=(H_1,\alpha^{(1)},r_1),\ldots,\ctrip_m=(H_m,\alpha^{(m)},r_m))$ of connected subgraph triples such that each vertex of $G$ is in exactly one $H_i$ and we have $\min(H_1)<\cdots<\min(H_m)$. We define the \emph{type} and \emph{sign} of $\strip$ as
\begin{equation*}
\type(\strip)=\sort(\alpha^{(1)}\cdots\alpha^{(m)}), \text{ and }\sign(\strip)=(-1)^{\sum_{i=1}^m\left(\ell(\alpha^{(i)})-1+|H_i|-|E(H_i)|\right)}.
\end{equation*}
We define the \emph{reduced type} of $\strip$, denoted $\type'(\strip)$, to be the partition $\type(\strip)$ with an instance of $\alpha^{(1)}_1$ removed. Let $\ST(G)$ denote the set of subgraph triples of $G$, and as in Definition \ref{def:fmat:fti}, we define the subset
\[\text{ST}^{(i)}(G+P_j)=\{\strip\in\text{ST}(G+P_j): \ \alpha_1=i, r=1, \{n,n+1,\ldots,n+j-1\}\subseteq H', \alpha'_\ell\geq j\},\]
where $\mathcal C=(H,\alpha,r)$ and $\mathcal C'=(H',\alpha',r')$ are the connected subgraph triples of $\mathcal S$ with $1\in H$ and $n\in H'$. Note that we could have $\mathcal C=\mathcal C'$.
\end{definition}

We now show that sums of subgraph triples and forest triples agree.

\begin{proposition}\label{prop:subgraphtriples} Let $G=([n],E)$ be a graph. Then we have \begin{equation}\label{eq:trace:s=f}            \sum_{\strip\in\ST^{(i)}(G+P_j)}\sign(\strip) e_{\type'(\strip)}=\sum_{\ftrip\in\FT^{(i)}(G+P_j)}\sign(\ftrip) e_{\type'(\ftrip)}.\end{equation}
\end{proposition}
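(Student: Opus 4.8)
The plan is to construct a sign-reversing involution on the subgraph triples in $\ST^{(i)}(G+P_j)$ whose fixed points are exactly the forest triples in $\FT^{(i)}(G+P_j)$, and which preserves the reduced type. A subgraph triple differs from a forest triple only in that its connected pieces $H_i$ may contain cycles; the sign exponent $\ell(\alpha^{(i)})-1+|H_i|-|E(H_i)|$ picks up an extra contribution exactly when $H_i$ is not a tree, i.e. when $|E(H_i)|>|H_i|-1$. So the natural strategy is to toggle a single, canonically chosen edge: if some $H_i$ has a cycle, find the lexicographically largest broken circuit contained in $H_i$ — equivalently, use the edge ordering $\lessdot$ and the NBC condition — and remove (or, in the fixed-point-free case, add back) the maximal edge $\max(C)$ of the associated circuit $C$, leaving the vertex set of $H_i$ and all of $\alpha^{(i)}$, $r_i$ untouched. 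Removing an edge from a connected graph that still leaves it connected changes $|E(H_i)|$ by one and hence flips the sign, while fixing the type; the fixed points are precisely the subgraph triples all of whose pieces are NBC trees, which is the definition of $\FT^{(i)}(G+P_j)$.

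First I would set up the matching: given $\strip\in\ST^{(i)}(G+P_j)$, scan the connected subgraph triples $\ctrip_1,\dots,\ctrip_m$ in order and locate the first $H_i$ that contains a cycle. Within $H_i$, among all cycles (as edge sets) $C\subseteq E(H_i)$, I need a canonical choice so that adding and removing are mutually inverse; the standard device is to take the edge $e^\star$ to be the $\lessdot$-largest edge lying on some cycle of $H_i$, then let $C$ be determined by the fundamental cycle of $e^\star$ with respect to a spanning tree of $H_i\setminus e^\star$ (one checks this is well defined, e.g. via the matroid-theoretic characterization of broken circuits). Toggling $e^\star$: if $e^\star\in E(H_i)$ and $H_i\setminus e^\star$ is connected, remove it; in the reverse direction, if $H_i$ is a subtree that fails NBC because it contains a broken circuit $B=C\setminus\{\max C\}$, add $\max C$. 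The map $\strip\mapsto\strip'$ built this way is an involution because the toggled edge $e^\star$ is recovered as the same canonical edge of the modified piece, and it is sign-reversing because $|H_i|$ is unchanged while $|E(H_i)|$ changes by $\pm1$, flipping exactly one factor $(-1)^{|H_i|-|E(H_i)|}$. Crucially I must verify that toggling $e^\star$ does not disturb the defining constraints of $\ST^{(i)}(G+P_j)$: the conditions $\alpha_1=i$, $r=1$, $\alpha'_\ell\geq j$, and $\{n,\dots,n+j-1\}\subseteq H'$ all refer only to the vertex sets and the compositions, which are untouched, and one must check the path $P_j$ edges are never candidates for $e^\star$ (they lie on no cycle), so the piece $H'$ stays connected and still contains the whole path.

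The equality of the two sums then follows immediately: the involution cancels all non-fixed terms in pairs of opposite sign and equal $e_{\type'}$, leaving $\sum_{\strip\text{ fixed}}\sign(\strip)e_{\type'(\strip)}=\sum_{\ftrip\in\FT^{(i)}(G+P_j)}\sign(\ftrip)e_{\type'(\ftrip)}$, where one notes that on fixed points $\sign(\strip)=(-1)^{\sum(\ell(\alpha^{(i)})-1)}=\sign(\ftrip)$ since each $H_i$ is a tree.

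**The main obstacle** I anticipate is making the choice of canonical edge $e^\star$ genuinely well defined and provably involutive, especially in pieces with several independent cycles or with the subtle interaction between the NBC condition (phrased via broken circuits $B=C\setminus\{\max C\}$) and the act of adding an edge back to a tree — one has to be sure that the tree you land on after removing $e^\star$ does contain the broken circuit $B$ that would make you re-add exactly $e^\star$, and not some other edge. This is the standard but delicate point in no-broken-circuit involutions, and I would handle it by invoking the matroid/Whitney characterization: $e^\star$ should be chosen so that $C\setminus\{e^\star\}$ is a broken circuit of the graphic matroid restricted to $H_i\cup\{e^\star\}$, which pins down both directions simultaneously. (Alternatively, since Proposition~\ref{prop:ftmatrixkj} lets us reduce column $j$ to column $1$, it would suffice to prove the identity only for $j=1$, i.e. for $\ST(G)$ versus $\FT(G)$, which streamlines the bookkeeping around the path $P_j$; I would mention this reduction but still carry out the involution, since it is the real content.)
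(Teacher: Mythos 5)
Your overall strategy --- a type-preserving, sign-reversing edge toggle whose fixed points are exactly the forest triples --- is the same as the paper's, but the concrete pairing rule you propose does not work as stated, and the difficulty you flag at the end is exactly where it breaks. Membership in $\ST^{(i)}(G+P_j)\setminus\FT^{(i)}(G+P_j)$ has two sources: a piece containing a cycle, or a piece that is a tree but contains a broken circuit. Your forward rule (``the first $H_i$ containing a cycle'', then the $\lessdot$-largest edge on a cycle of $H_i$) only sees the first source, while your backward rule (add $\max(C)$ to a non-NBC tree) lives on the second, and the two do not compose to an involution. Concretely, suppose $H_1$ is a non-NBC tree and $H_2$ contains a cycle $C$: your forward rule deletes an edge of $H_2$, after which no piece contains a cycle but both $H_1$ and $H_2\setminus e^\star$ fail NBC (the latter still contains $C\setminus\{\max(C)\}$), and your backward rule does not specify which piece, nor which of possibly several broken circuits, to use --- scanning in order would send you to $H_1$, not back to $\mathcal S$. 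Moreover, even within a single piece the $\lessdot$-largest edge lying on a cycle of $H_i$ need not be what the backward rule re-inserts: $H_i$ may contain a broken circuit $C'\setminus\{e'\}$ whose absent top edge $e'$ is larger than every edge on a cycle of $H_i$, so ``add the largest $\max(C)$'' returns $e'$ rather than the edge you deleted. (Your parenthetical reduction to $j=1$ via Proposition~\ref{prop:ftmatrixkj} is also not available here, since that proposition concerns forest triples and no analogue for subgraph triples has been established.)

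The paper resolves all of this with one uniform choice: take the single $\lessdot$-largest edge $e$, present in a piece or not, such that some piece contains the broken circuit $B=C\setminus\{e\}$, and toggle that $e$ in that piece. This covers both failure modes at once (a piece containing a cycle $C$ automatically contains $C\setminus\{\max(C)\}$); the relevant piece is unique because it must contain both endpoints of $e$; removing $e$ only shrinks edge sets, so no broken circuit with a larger top edge appears and $B$ survives; and for the addition direction a symmetric-difference argument in the cycle space shows that adjoining $e$ cannot create a broken circuit with top edge $\gtrdot e$ inside that piece. Hence the same $e$ is selected after the toggle and the map is a genuine sign-reversing involution on $\ST^{(i)}(G+P_j)\setminus\FT^{(i)}(G+P_j)$, which is the step missing from your write-up; your closing appeal to the matroid/Whitney characterization is pointing at this device but neither pins down the global choice (over all pieces and over absent edges) nor proves involutivity. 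The bookkeeping you do carry out --- that the constraints $\alpha_1=i$, $r=1$, $\alpha'_\ell\geq j$, $\{n,\ldots,n+j-1\}\subseteq H'$ depend only on vertex sets and compositions, that the $P_j$ edges lie on no cycle, and that the sign flips because exactly one $|E(H_i)|$ changes by one --- is correct and matches the paper.
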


    \begin{proof}
        Let $A=\ST^{(i)}(G+P_j)\setminus \FT^{(i)}(G+P_j)$. We will find a bijection $\psi:A\to A$ such that \begin{equation*}\text{type}'(\psi(\mathcal S))=\text{type}'(\mathcal S)\text{ and }\text{sign}(\psi(\mathcal S))=-\text{sign}(\mathcal S).\end{equation*} Then the result will follow because 
\begin{equation*}
\sum_{\mathcal S\in\text{ST}^{(i)}(G+P_j)}\text{sign}(\mathcal S)e_{\text{type}'(\mathcal S)}-\sum_{\mathcal F\in\text{FT}^{(i)}(G+P_j)}\text{sign}(\mathcal F)e_{\text{type}'(\mathcal F)}=\sum_{\mathcal S\in A}\text{sign}(\mathcal S)e_{\text{type}'(\mathcal S)}=0.
\end{equation*} 
Let $\mathcal S=(\mathcal C_i=(H_i,\alpha^{(i)},r_i))\in A$. By definition, some connected subgraph will contain a broken circuit. Let $e$ be the largest edge under our fixed total ordering $\lessdot$ such that some $H_i$ contains a broken circuit of the form $B=C\setminus \{e\}$. We then define the subgraph triple
\begin{equation*}
\psi(\mathcal S)=\begin{cases}
\mathcal S\setminus \mathcal C_i\cup\{(H_i\setminus e,\alpha^{(i)},r_i)\},&\text{ if }e\in H_i,\\
\mathcal S\setminus \mathcal C_i\cup \{(H_i\cup e,\alpha^{(i)},r_i)\},&\text{ if }e\notin H_i.
\end{cases}
\end{equation*}
In other words, we either add or remove the edge $e$ from $\mathcal S$. The map $\psi$ is its own inverse. We have $\text{type}'(\psi(\mathcal S))=\text{type}'(\mathcal S)$ because the compositions have not changed, and we have $\text{sign}(\psi(\mathcal S))=-\text{sign}(\mathcal S)$ because we have changed the number of edges by one. This completes the proof.
\end{proof}

Now we can also calculate the chromatic symmetric function using subgraph triples. 

\begin{corollary}
    We have $X_G(\boldsymbol{x})=\sum_{\strip\in \ST(G)}\sign(\strip) e_{\type(\strip)}$.
\end{corollary}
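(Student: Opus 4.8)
The plan is to deduce the formula from two facts already in place: Proposition~\ref{prop:subgraphtriples}, which says that over $G+P_j$ the signed, reduced-type generating functions of subgraph triples and forest triples coincide, and Proposition~\ref{prop:xmatrix}, which writes $X_G(\bm x)=\vec vF_G\vec w^T=\sum_{i\geq 1}ie_i(F_G)_{i,1}$ with $(F_G)_{i,1}=\sum_{\ftrip\in\FT^{(i)}(G+P_1)}\sign(\ftrip)e_{\type'(\ftrip)}$. Since $P_1$ is a single vertex we have $G+P_1=G$, and in the definition of $\ST^{(i)}(G+P_1)$ the conditions other than $\alpha^{(1)}_1=i$ and $r_1=1$ are automatic (vertex $n$ lies in some connected subgraph, and every part of a composition is at least $1$), so
\begin{equation*}
\ST^{(i)}(G+P_1)=\{\strip\in\ST(G):\ \alpha^{(1)}_1=i,\ r_1=1\},
\end{equation*}
where $\alpha^{(1)}$ is the composition attached to the connected subgraph of $\strip$ that contains vertex $1$. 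The only work left is the elementary bookkeeping, identical in spirit to the proof of Proposition~\ref{prop:xmatrix}, that passes from this superscript form back to the plain sum over $\ST(G)$.

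Carrying this out: group the terms of $\sum_{\strip\in\ST(G)}\sign(\strip)e_{\type(\strip)}$ by the value $i=\alpha^{(1)}_1$. Neither $\sign(\strip)$ nor $\type(\strip)$ depends on $r_1$, so summing over the $i$ allowed choices $r_1\in\{1,\dots,i\}$ contributes a factor $i$; and since $\type'(\strip)$ is $\type(\strip)$ with one instance of $i$ removed, $e_{\type(\strip)}=e_ie_{\type'(\strip)}$. Hence
\begin{equation*}
\sum_{\strip\in\ST(G)}\sign(\strip)e_{\type(\strip)}=\sum_{i\geq 1}ie_i\sum_{\strip\in\ST^{(i)}(G+P_1)}\sign(\strip)e_{\type'(\strip)}.
\end{equation*}
Applying Proposition~\ref{prop:subgraphtriples} with $j=1$, each inner sum equals $\sum_{\ftrip\in\FT^{(i)}(G+P_1)}\sign(\ftrip)e_{\type'(\ftrip)}=(F_G)_{i,1}$, so the right-hand side is $\sum_{i\geq 1}ie_i(F_G)_{i,1}=\vec vF_G\vec w^T$, which is $X_G(\bm x)$ by Proposition~\ref{prop:xmatrix}. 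This completes the argument.

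A more direct alternative is to re-run the sign-reversing involution of Proposition~\ref{prop:subgraphtriples} with no auxiliary path: every $\strip\in\ST(G)\setminus\FT(G)$ has some connected subgraph $H_i$ containing a broken circuit (if $H_i$ is not an NBC tree it either contains a cycle $C$, hence contains $C\setminus\{\max(C)\}$, or is a tree with a broken circuit), and one toggles the $\lessdot$-largest edge $e$ that occurs as the deleted edge of such a broken circuit in some $H_i$; this fixes all compositions and changes one $|E(H_i)|$ by one, reversing the sign and preserving the type, so the sums over $\ST(G)$ and $\FT(G)$ agree and one concludes by \cite[Theorem 5.10]{qforesttriples}. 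I do not expect a real obstacle here; the only point needing care — that toggling $e$ keeps $H_i$ a connected subgraph on the same vertex set, so that $\strip$ stays a valid subgraph triple of the same type — is precisely the verification already contained in the proof of Proposition~\ref{prop:subgraphtriples}.
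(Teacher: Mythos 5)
Your main argument is exactly the paper's proof: the paper also combines Proposition~\ref{prop:xmatrix} with Proposition~\ref{prop:subgraphtriples} at $j=1$, writing $X_G(\bm x)=\sum_i ie_i\sum_{\strip\in\ST^{(i)}(G+P_1)}\sign(\strip)e_{\type'(\strip)}$ and then ungrouping over $r_1$ and $\alpha^{(1)}_1$, which is the bookkeeping you spell out explicitly. The proposal is correct, and the direct-involution alternative you sketch at the end is fine but not needed.
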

\begin{proof}
    Using Proposition~\ref{prop:xmatrix} and Proposition~\ref{prop:subgraphtriples}, we get
    \[X_G(\boldsymbol{x})=\sum_{i=1}^n i e_i \left(\sum_{\strip \in \ST^{(i)}(G+P_1)}\sign(\strip) e_{\type'(\strip)}\right)=\sum_{\strip\in\ST(G)}\sign(\strip) e_{\type(\strip)}.\]
\end{proof}

We now prove Theorem~\ref{thm:trace:trace} by finding a bijection on subgraph triples.

\begin{proof}[Proof of Theorem~\ref{thm:trace:trace}]
    Let $A=\bigsqcup_{k}\ST^{(k)}(G+P_k)$. We will find a bijection $\psi:\ST(G^\circ)\rightarrow A$ such that \begin{equation}\label{eq:subgraphbreakprops}\sign(\strip)=\sign(\psi(\strip))\text{ and }\type(\strip)=\type'(\psi(\strip)).\end{equation}
    Then the result will follow because
    \begin{align*}
        X_{G^\circ}(\boldsymbol{x})&=\sum_{\strip\in\ST(G^\circ)}\sign(\strip) e_{\type(\strip)}=\sum_{\strip\in A}\sign(\strip) e_{\type'(\strip)}
        =\sum_{k\geq 1}(F_G)_{k,k}=\trace(F_G).
    \end{align*}
    
Recall that the graphs $G$ and $G^\circ$ have the same number of edges because we are allowing multiple edges in $G^\circ$. Therefore, there is a bijection on the edge multisets $\varphi:E(G)\to E(G^\circ)$ where for $u<v$, we have
\begin{equation*}
\varphi(\{u,v\})=\begin{cases}
\{u,1\},&\text{ if }v=n,\\
\{u,v\},&\text{ otherwise.}
\end{cases}
\end{equation*}
Now for a subgraph $H=(V(H),E(H))$ of $G$, we define the subgraph of $G^\circ$
\begin{equation*}
\varphi(H)=(V(H)\setminus\{n\},\varphi(E(H))),
\end{equation*}
and for a subgraph $H^\circ=(V(H^\circ),E(H^\circ))$ of $G^\circ$, we define the subgraph of $G$
\begin{equation*}
\varphi^{-1}(H^\circ)=(V^*,\varphi^{-1}(E(H^\circ))),\text{ where }V^*=\begin{cases}
V(H^\circ)\cup\{n\},&\text{ if }1\in V(H^\circ),\\
V(H^\circ),&\text{ otherwise.}
\end{cases}
\end{equation*}
    We now define our bijection $\psi$. Some examples are given in Figure \ref{fig:tracepsi}.
    For $\strip\in \ST(G^\circ)$, let $\ctrip^\circ=(H^\circ,\alpha,r)$ be the connected subgraph triple of $\mathcal S$ with $1\in H^\circ$. Note that $\varphi^{-1}(H^\circ)$ has either $1$ or $2$ connected components. First suppose that $\varphi^{-1}(H^\circ)$ contains 2 connected components $H$ and $H'$ with $1\in H$ and $n\in H'$. Because $|\alpha|=|H^\circ|\geq |H'|$, there is some minimal $t$ with $\alpha_1+\cdots+\alpha_t\geq |H'|$; let
    \[k=\alpha_1+\cdots+\alpha_t-|H'|+1.\]
    Then we define $\psi(\strip)$ by replacing $\ctrip^\circ$ with
    \[\ctrip'=\left(H'\cup P_k, \alpha_1\cdots \alpha_t, r\right)\text{ and }\ctrip=\left(H,k \;\alpha_{t+1}\cdots \alpha_{\ell}, 1\right),\]
    where $H'\cup P_k$ is obtained by attaching the path on vertices $n,\ldots,n+k-1$ to $H'$. By minimality of $t$, we have 
 $\alpha_1+\cdots+\alpha_{t-1}<|H'|$ and therefore $\alpha_t\geq k$, so we indeed have $\psi(\mathcal S)\in\text{ST}^{(k)}(G+P_k)$. If instead $\varphi^{-1}(H^\circ)$ has only 1 connected component $H$, then letting $k=r$, we define $\psi(\strip)$ by replacing $\ctrip^\circ$ with
    \[\ctrip=\left(H,k \ \alpha_2\cdots \alpha_\ell \cdot \alpha_1, 1\right).\] Note that $\alpha_1\geq r=k$, so we indeed have $\psi(\mathcal S)\in\text{ST}^{(k)}(G+P_k)$. The map $\psi$ satisfies \eqref{eq:subgraphbreakprops} by construction. We now verify that $\psi$ is a bijection by constructing the inverse. Let $\strip\in A$ and let $\ctrip=(H,\alpha,r)$ and $\ctrip'=(H',\alpha',r')$ be the connected subgraph triples of $\mathcal S$ with $1\in H$ and $n\in H'$. If $\ctrip\neq \ctrip'$, then $\psi^{-1}(\strip)$ is given by replacing $\ctrip$ and $\ctrip'$ with
    \[\ctrip^\circ=\left(\varphi(H\cup H'),\alpha'\cdot \alpha_2\cdots\alpha_\ell, r'\right).\]

    If instead $\ctrip=\ctrip'$, then $\psi^{-1}(\strip)$ is given by replacing $\ctrip$ with
    \[\ctrip^\circ=\left(\varphi(H), \alpha_\ell\cdot\alpha_2\cdots\alpha_{\ell-1},\alpha_1\right).\]
    Note that in this case, $\alpha_1=k\leq\alpha_\ell$, so $\mathcal C^\circ$ is indeed a connected subgraph triple of $G^\circ$. This completes the proof.

    \begin{figure}
        \begin{tikzpicture}
        \tikzmath{\h1=8.5;};
\vertex[red][-3pt][above]{1}[(\h1,0)]{(0,0)};
\vertexs[gray]{2}[(\h1,0)]{(-0.5,0.866)};
\vertexs[teal]{3}[(\h1,0)]{(-1,0)};
\vertexs[teal]{4}[(\h1,0)]{(-0.5,-0.866)};
\vertexs[teal]{5}[(\h1,0)]{(-0.5,-1.866)};
\vertexs[teal]{6}[(\h1,0)]{(0.5,-1.866)};
\vertexs[gray]{7}[(\h1,0)]{(0.5,-0.866)};
\vertexs[teal]{8}[(\h1,0)]{(1.5,-1.866)};
\vertexs[gray]{9}[(\h1,0)]{(1.5,-0.866)};
\vertexs[teal]{10}[(\h1,0)]{(2.5,-0.866)};
\vertexs[gray]{11}[(\h1,0)]{(3,0)};
\vertexs[teal]{12}[(\h1,0)]{(2.5,0.866)};
\vertexs[teal]{13}[(\h1,0)]{(1.5,0.866)};
\vertex[red][-3pt][left]{14}[(\h1,0)]{(1,0)};
\vertex[teal][-3pt][left]{15}[(\h1,0)]{(1,0.5)};
\vertex[teal][-3pt][left]{16}[(\h1,0)]{(1,1)};
\vertex[teal][-3pt][left]{17}[(\h1,0)]{(1,1.5)};


\draw[color=teal] (v1)--(v4)--(v3) (v4)--(v6) (v4)--(v5)--(v6)--(v8)--(v10)--(v13)--(v12)--(v14)--(v15)--(v16)--(v17) (v12)--(v10);

\draw[color=gray] (v7)--(v9)--(v11);

\draw[dashed, opacity=0.4] (v1)--(v2)--(v3)--(v1) (v2)--(v4) (v4)--(v7)--(v6) (v7)--(v5) (v7)--(v8)--(v9)--(v10)--(v11)--(v12)--(v9) (v13)--(v14)--(v9)--(v13) (v14)--(v11);

\node[align=center] at (\h1+1,-2.5) {$\strip\in \ST^{(3)}(G+P_3)$,\\$\textcolor{teal}{\ctrip=\ctrip'=(H,\alpha=3145, r=1)}$};

\tikzmath{\arrplace=(\h1+2)/2;\h1=0;}

\draw [arrows = {-Stealth[scale=2]}] (\arrplace-1.75,0)--(\arrplace+1.75,0);
\draw [arrows = {-Stealth[scale=2]}] (\arrplace+1.75,0)--(\arrplace-1.75,0);

\vertex[red]{1}[(\h1,0)]{(0.5,0)};
\vertexs[gray]{2}[(\h1,0)]{(-0.5,0.866)};
\vertexs[teal]{3}[(\h1,0)]{(-1,0)};
\vertexs[teal]{4}[(\h1,0)]{(-0.5,-0.866)};
\vertexs[teal]{5}[(\h1,0)]{(-0.5,-1.866)};
\vertexs[teal]{6}[(\h1,0)]{(0.5,-1.866)};
\vertexs[gray]{7}[(\h1,0)]{(0.5,-0.866)};
\vertexs[teal]{8}[(\h1,0)]{(1.5,-1.866)};
\vertexs[gray]{9}[(\h1,0)]{(1.5,-0.866)};
\vertexs[teal]{10}[(\h1,0)]{(2.5,-0.866)};
\vertexs[gray]{11}[(\h1,0)]{(3,0)};
\vertexs[teal]{12}[(\h1,0)]{(2.5,0.866)};
\vertexs[teal]{13}[(\h1,0)]{(1.5,0.866)};

\draw[color=teal] (v1)--(v4)--(v3) (v4)--(v6) (v4)--(v5)--(v6)--(v8)--(v10)--(v13)--(v12)--(v1) (v12)--(v10);

\draw[color=gray] (v7)--(v9)--(v11);

\draw[dashed, opacity=0.4] (v1)--(v2)--(v3)--(v1) (v2)--(v4) (v4)--(v7)--(v6) (v7)--(v5) (v7)--(v8)--(v9)--(v10)--(v11)--(v12)--(v9) (v13)--(v1)--(v9)--(v13) (v1)--(v11);

\node[align=center] at (\h1+1,-2.5) {$\strip\in \ST(G^\circ)$,\\$\textcolor{teal}{\ctrip^\circ=(H^\circ=\varphi(H),\alpha=514, r=3)}$};

\tikzmath{\v1=6; \h1=8.5;};
\vertex[red][-3pt][above]{1}[(\h1,\v1)]{(0,0)};
\vertexs[gray]{2}[(\h1,\v1)]{(-0.5,0.866)};
\vertexs[cyan]{3}[(\h1,\v1)]{(-1,0)};
\vertexs[cyan]{4}[(\h1,\v1)]{(-0.5,-0.866)};
\vertexs[cyan]{5}[(\h1,\v1)]{(-0.5,-1.866)};
\vertexs[cyan]{6}[(\h1,\v1)]{(0.5,-1.866)};
\vertexs[gray]{7}[(\h1,\v1)]{(0.5,-0.866)};
\vertexs[magenta]{8}[(\h1,\v1)]{(1.5,-1.866)};
\vertexs[gray]{9}[(\h1,\v1)]{(1.5,-0.866)};
\vertexs[magenta]{10}[(\h1,\v1)]{(2.5,-0.866)};
\vertexs[gray]{11}[(\h1,\v1)]{(3,0)};
\vertexs[magenta]{12}[(\h1,\v1)]{(2.5,0.866)};
\vertexs[magenta]{13}[(\h1,\v1)]{(1.5,0.866)};
\vertex[red][-3pt][left]{14}[(\h1,\v1)]{(1,0)};
\vertex[magenta][-3pt][left]{15}[(\h1,\v1)]{(1,0.5)};
\vertex[magenta][-3pt][left]{16}[(\h1,\v1)]{(1,1)};
\vertex[magenta][-3pt][left]{17}[(\h1,\v1)]{(1,1.5)};


\draw[color=cyan] (v1)--(v4)--(v3) (v4)--(v6) (v4)--(v5)--(v6);

\draw[color=magenta] (v8)--(v10)--(v13)--(v12)--(v14)--(v15)--(v16)--(v17) (v12)--(v10);

\draw[color=gray] (v7)--(v9)--(v11);

\draw[dashed,opacity=0.4] (v1)--(v2)--(v3)--(v1) (v2)--(v4) (v4)--(v7)--(v6)--(v8) (v7)--(v5) (v7)--(v8)--(v9)--(v10)--(v11)--(v12)--(v9) (v13)--(v14)--(v9)--(v13) (v14)--(v11);

\node[align=center] at (\h1+1,-2.75+\v1) {$\strip\in \ST^{(3)}(G+P_3)$\\$\textcolor{cyan}{\ctrip=(H,\alpha=311, r=1)}$\\$\textcolor{magenta}{\ctrip'=(H',\alpha'=26, r'=2)}$};

\tikzmath{\arrplace=(\h1+2)/2;\h1=0;}

\draw [arrows = {-Stealth[scale=2]}] (\arrplace-1.75,\v1)--(\arrplace+1.75,\v1);
\draw [arrows = {-Stealth[scale=2]}] (\arrplace+1.75,\v1)--(\arrplace-1.75,\v1);

\vertex[red]{1}[(\h1,\v1)]{(0.5,0)};
\vertexs[gray]{2}[(\h1,\v1)]{(-0.5,0.866)};
\vertexs[teal]{3}[(\h1,\v1)]{(-1,0)};
\vertexs[teal]{4}[(\h1,\v1)]{(-0.5,-0.866)};
\vertexs[teal]{5}[(\h1,\v1)]{(-0.5,-1.866)};
\vertexs[teal]{6}[(\h1,\v1)]{(0.5,-1.866)};
\vertexs[gray]{7}[(\h1,\v1)]{(0.5,-0.866)};
\vertexs[teal]{8}[(\h1,\v1)]{(1.5,-1.866)};
\vertexs[gray]{9}[(\h1,\v1)]{(1.5,-0.866)};
\vertexs[teal]{10}[(\h1,\v1)]{(2.5,-0.866)};
\vertexs[gray]{11}[(\h1,\v1)]{(3,0)};
\vertexs[teal]{12}[(\h1,\v1)]{(2.5,0.866)};
\vertexs[teal]{13}[(\h1,\v1)]{(1.5,0.866)};

\draw[color=teal] (v1)--(v4)--(v3) (v4)--(v6) (v4)--(v5)--(v6);

\draw[color=teal] (v8)--(v10)--(v13)--(v12)--(v1) (v12)--(v10);

\draw[color=gray] (v7)--(v9)--(v11);

\draw[dashed,opacity=0.4] (v1)--(v2)--(v3)--(v1) (v2)--(v4) (v4)--(v7)--(v6)--(v8) (v7)--(v5) (v7)--(v8)--(v9)--(v10)--(v11)--(v12)--(v9) (v13)--(v1)--(v9)--(v13) (v1)--(v11);

\node[align=center] at (\h1+1,\v1+-2.75) {$\strip\in \ST(G^\circ)$\\$\textcolor{teal}{\ctrip^\circ=(H^\circ,\alpha=2611, r=2)}$\\$\textcolor{teal}{H^\circ=\varphi(H\cup H')}$};
        \end{tikzpicture}
    \caption{\label{fig:tracepsi} Some examples of the map $\psi:\ST(G^\circ)\rightarrow A$.}
    \end{figure}
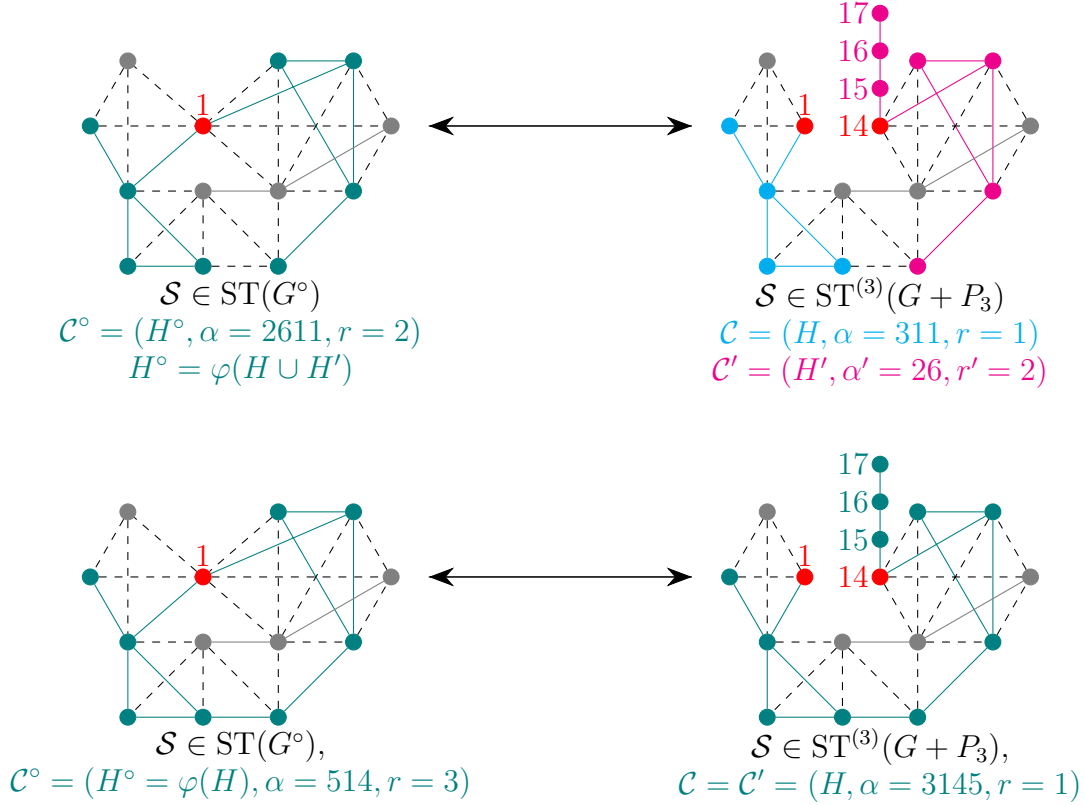

\end{proof}
We can use the formula for $F_{P_n}$ to provide an alternative proof for the chromatic symmetric function of cycles.
\begin{corollary}\cite[Corollary 6.2]{chromquasidi} \label{thm:trace:cg}
The chromatic symmetric function of the cycle $C_n$ is
\begin{equation*}\label{eqn:cyclecsf}X_{C_n}(\boldsymbol{x})=\sum_{\alpha\models n}\alpha_1(\alpha_1-1)\cdots(\alpha_\ell-1)e_{\sort(\alpha)}.\end{equation*}
\end{corollary}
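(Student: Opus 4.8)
The plan is to identify the cycle $C_n$ with the graph $(P_{n+1})^\circ$ and then combine Theorem~\ref{thm:trace:trace} with the explicit formula for $F_{P_{n+1}}$ from Proposition~\ref{prop:ftpn}. First I would check that gluing the first and last vertices of the path $P_{n+1}$ on vertices $1,\ldots,n+1$ (with edges $\{1,2\},\ldots,\{n,n+1\}$) replaces the edge $\{n,n+1\}$ by $\{n,1\}$ and produces exactly the cycle $C_n$ on $[n]$; for $n\geq 3$ no multiple edge arises, since $1$ and $n+1$ have no common neighbour in $P_{n+1}$. By Theorem~\ref{thm:trace:trace} this gives
\begin{equation*}
X_{C_n}(\boldsymbol{x})=\trace(F_{P_{n+1}})=\sum_{k=1}^{n}(F_{P_{n+1}})_{k,k}.
\end{equation*}

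Next I would substitute $q=1$ into Proposition~\ref{prop:ftpn}, using that $[m]_1=m$ and that $F_G=F_G(1)$ by definition. The diagonal entry with $i=j=k$ involves compositions of $(n+1)+k-k-1=n$, so
\begin{equation*}
(F_{P_{n+1}})_{k,k}=\sum_{\substack{\alpha\vDash n\\ \alpha_\ell\geq k}}(\alpha_1-1)\cdots(\alpha_\ell-1)\,e_{\sort(\alpha)}.
\end{equation*}
Summing over $1\leq k\leq n$ and interchanging the order of summation, a fixed composition $\alpha\vDash n$ is counted once for each $k$ with $1\leq k\leq\alpha_\ell$, hence exactly $\alpha_\ell$ times since $1\leq\alpha_\ell\leq n$, which yields
\begin{equation*}
X_{C_n}(\boldsymbol{x})=\sum_{\alpha\vDash n}\alpha_\ell(\alpha_1-1)\cdots(\alpha_\ell-1)\,e_{\sort(\alpha)}.
\end{equation*}

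Finally I would apply the reversal involution $\alpha=(\alpha_1,\ldots,\alpha_\ell)\mapsto(\alpha_\ell,\ldots,\alpha_1)$ on compositions of $n$, which preserves both $\sort(\alpha)$ and the product $(\alpha_1-1)\cdots(\alpha_\ell-1)$ while interchanging $\alpha_1$ and $\alpha_\ell$; this rewrites the sum above as $\sum_{\alpha\vDash n}\alpha_1(\alpha_1-1)\cdots(\alpha_\ell-1)\,e_{\sort(\alpha)}$, the desired formula. I do not expect a genuine obstacle here: the only point requiring care is the index bookkeeping in Proposition~\ref{prop:ftpn}, namely that the diagonal entries of $F_{P_{n+1}}$ range over compositions of $n$ rather than of $n+1$, together with the harmless replacement of $\alpha_\ell$ by $\alpha_1$ at the end.
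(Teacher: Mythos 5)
Your proposal is correct and follows essentially the same route as the paper: identify $C_n=(P_{n+1})^\circ$, apply Theorem~\ref{thm:trace:trace} together with Proposition~\ref{prop:ftpn} at $q=1$, interchange summation so each $\alpha\vDash n$ contributes $\alpha_\ell$ times, and finish by the composition-reversal symmetry replacing $\alpha_\ell$ with $\alpha_1$. The only difference is that you spell out the reversal involution and the index bookkeeping, which the paper leaves implicit.
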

\begin{proof}
    Using Proposition \ref{prop:ftpn} and Theorem \ref{thm:trace:trace}, we have
    \begin{align*}
        X_{C_n}(\boldsymbol{x})&=\trace(F_{P_{n+1}})=\sum_{k\geq 1}\left(\sum_{\alpha\models n, \ \alpha_\ell\geq k}(\alpha_1-1)\cdots (\alpha_\ell-1)e_{\sort(\alpha)}\right)\\
        &=\sum_{\alpha\models n}\alpha_\ell (\alpha_1-1)\cdots(\alpha_\ell-1)e_{\sort(\alpha)}=\sum_{\alpha\models n}\alpha_1(\alpha_1-1)\cdots(\alpha_\ell-1)e_{\text{sort}(\alpha)}.
    \end{align*}
\end{proof}
Curiously, if we include the parameter $q$, we find by Proposition~\ref{prop:ftpn} and \eqref{eq:xqcn} that the trace of $F_{P_{n+1}}(q)$ is
    \[\trace(F_{P_{n+1}}(q))=\sum_{\alpha\models n}\alpha_1([\alpha_1]_q-1)\cdots([\alpha_l]_q-1)e_{\sort(\alpha)}=X_{\vec C_n}(\bm x;q).\]
    This suggests the following $q$-analogue of Theorem~\ref{thm:trace:trace}. Note that we need to carefully keep track of our multiple directed edges in $\vec G^\circ$.
\begin{conjecture}\label{conj:qtrace}
    Let $G=([n],E)$ be an NUIG and let $\vec G^\circ$ be the directed multigraph obtained by directing the edges of $G$ from the smaller vertex to the larger vertex, and then gluing vertices $1$ and $n$. Then we have \[X_{\vec G^\circ}(\bm x;q)=\trace(F_G(q)).\]
\end{conjecture}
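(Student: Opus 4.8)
The plan is to prove Conjecture~\ref{conj:qtrace} by running the bijective argument behind Theorem~\ref{thm:trace:trace} while keeping track of the statistic $\weight$, and hence of $q$. The principal missing ingredient is a signed, $q$-weighted expansion
\[
X_{\vec G^\circ}(\bm x;q)=\sum_{\strip\in\ST(\vec G^\circ)}\sign(\strip)\,q^{\weight(\strip)}e_{\type(\strip)},
\]
where $\weight$ is built, in the spirit of \cite{qforesttriples}, from a $\vec G^\circ$-ascent statistic on a canonical reading word of each connected subgraph together with the shifts $r_i-1$. Since $\vec G^\circ$ is a proper circular arc digraph that is neither an NUIG nor even a simple graph (it may carry parallel directed edges, and a loop when $G=K_n$, in which case both sides vanish), this formula does not follow from the results in the excerpt and must be established first. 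A plausible route is to mimic the proof of \cite[Theorem~3.4]{qforesttriples}: expand $X_{\vec G^\circ}(\bm x;q)$ over spanning subgraphs, cancel broken-circuit contributions by a sign-reversing involution as in Proposition~\ref{prop:subgraphtriples} — now also checking that the involution preserves $\weight$ — and repackage the survivors as subgraph triples. The identity $\trace(F_{P_{n+1}}(q))=X_{\vec C_n}(\bm x;q)$ noted just above is the case $G=P_{n+1}$ and should serve both as a sanity check and as a template.

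Granting such a formula, the second step is to transport the bijection
\[
\psi:\ST(\vec G^\circ)\longrightarrow\bigsqcup_{k\geq 1}\ST^{(k)}(G+P_k)
\]
from the proof of Theorem~\ref{thm:trace:trace} to the directed, multi-edge setting. The edge bijection $\varphi$ now also records orientations: every edge of $G$ keeps its smaller-to-larger direction, and at the glued vertex the $G$-neighbours of $n$ become in-edges while the $G$-neighbours of $1$ become out-edges. One checks that $\psi$ remains well defined with multiplicities present, and that $\sign(\strip)=\sign(\psi(\strip))$ and $\type(\strip)=\type'(\psi(\strip))$; both are inherited verbatim from the $q=1$ proof, since the orientation data affects neither the number of edges nor the compositions.

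The crux is the third step: showing $\weight(\strip)=\weight(\psi(\strip))$ once $\weight$ is correctly defined on $\ST(\vec G^\circ)$. When $\varphi^{-1}(H^\circ)$ splits into two connected components, the parameter $r$ of $\ctrip^\circ$ is carried to $\ctrip'$ and a fresh $r=1$ is introduced, so $\sum_i(r_i-1)$ is unchanged, and the ascent count should be preserved for the same reason as in Proposition~\ref{prop:ftmult} — the inserted path $P_k$ creates no new inversions and the $\vec G^\circ$-ascents of the reading word redistribute between the two pieces; already this constrains what the ascent statistic on $\ST(\vec G^\circ)$ must be. The genuinely hard case is when $\varphi^{-1}(H^\circ)$ is connected: there $k=r$, the composition is cyclically rotated from $\alpha_1\alpha_2\cdots\alpha_\ell$ to $k\,\alpha_2\cdots\alpha_\ell\,\alpha_1$, so $\sum_i(r_i-1)$ drops by $k-1$. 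For the identity to survive, this deficit must be cancelled exactly by the change in the ascent statistic when the reading word of the connected subgraph containing the glued vertex is cut there and rotated; carrying out this bookkeeping, and more fundamentally choosing the ascent statistic on $\ST(\vec G^\circ)$ so that it transforms correctly under such rotations, is where I expect the real difficulty to lie, and is the reason the statement is presently a conjecture.

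Once Steps~1--3 are in place, the proof closes exactly as for Theorem~\ref{thm:trace:trace}:
\begin{align*}
X_{\vec G^\circ}(\bm x;q)&=\sum_{\strip\in\bigsqcup_{k\geq 1}\ST^{(k)}(G+P_k)}\sign(\strip)\,q^{\weight(\strip)}e_{\type'(\strip)}\\
&=\sum_{k\geq 1}(F_G(q))_{k,k}=\trace(F_G(q)),
\end{align*}
where the middle equality is the definition of $F_G(q)$ (Definition~\ref{def:ftm:Fgij}) together with the $q$-weighted form of Proposition~\ref{prop:subgraphtriples}, and Proposition~\ref{prop:ftmatrixzeroes} ensures that only finitely many diagonal entries contribute.
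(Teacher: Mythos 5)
This statement is an open conjecture in the paper: the authors offer no proof, only consistency checks (the case $G=P_{n+1}$ via Proposition~\ref{prop:ftpn} and \eqref{eq:xqcn}, three small examples, and computer verification up to $8$ vertices). Your proposal is likewise not a proof but a plan, and you say so yourself; so there is nothing in the paper to compare it against, and as it stands the argument has genuine gaps exactly where you locate them. Concretely, Step~1 is unavailable from existing results: the $q$-weighted forest-triple formula \cite[Theorem~3.4]{qforesttriples} is proved only for NUIGs, with $\weight$ defined through the lexicographic edge order and the $\text{inv}_G$ statistic on reading words, none of which is defined for $\vec G^\circ$, which is a directed multigraph on a circular order. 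Moreover, the $q=1$ reduction to subgraph triples (Proposition~\ref{prop:subgraphtriples}) leans on the fact that parallel edges do not change $X_{G^\circ}(\bm x)$, whereas the paper's examples show that parallel directed edges genuinely change $X_{\vec G^\circ}(\bm x;q)$ (the three graphs in Figure~\ref{fig:directedmultigraphs} have the same underlying simple graph but different $q$-traces), so any broken-circuit cancellation and any ascent statistic must be built to respect edge multiplicities from the start; this is a new construction, not a routine transport of the undirected argument.

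Step~3 is the second essential gap, and your own accounting shows why it cannot be waved through: in the connected case the bijection $\psi$ of Theorem~\ref{thm:trace:trace} rotates the composition and resets $r$ to $1$, so $\sum_i(r_i-1)$ drops by $k-1$, and one must design the ascent statistic on $\ST(\vec G^\circ)$ so that cutting and rotating the reading word at the glued vertex gains exactly $k-1$ (uniformly in $q$-weight, not merely at $q=1$). Nothing in the paper, and nothing in your sketch, establishes that such a statistic exists; verifying it on $\trace(F_{P_{n+1}}(q))=X_{\vec C_n}(\bm x;q)$ is a sanity check, not a proof, since that identity is obtained by direct computation from Proposition~\ref{prop:ftpn} and Ellzey's formula rather than from a weight-preserving bijection. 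So the proposal is a reasonable research programme --- essentially the one the conjecture implicitly suggests --- but Steps~1 and~3 are the content of the conjecture, and until they are carried out the statement remains unproved.
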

\begin{example}
    \begin{figure}
        $$\begin{tikzpicture}[decoration={
    markings,
    mark=at position 0.65 with {\arrow[scale=1.5]{Stealth}}}]
            \tikzmath{\sqtbt={sqrt(3)/2};\w=2;};
            \node at (-0.25,0) {$\Bigg($};
            \node at (3.5,0) {$\Bigg)^\circ\hspace{-0.2cm}=$};
            \vertex{1}{(0,0)};
            \vertex{2}{(1,0)};
            \vertex{3}{(2,0)};
            \vertex{4}{(3,0)};
            \draw (v1)--(v2);
            \draw (v2)--(v3);
            \draw (v3)--(v4);
            \tikzmath{\h1=4.15;\v1=0;};
            \vertex{1}[(\h1,\v1)]{(0,0)};
            \vertex{2}[(\h1,\v1)]{(\sqtbt,0.5)};
            \vertex[black][-3pt][270]{3}[(\h1,\v1)]{(\sqtbt,-0.5)};
            \draw[postaction={decorate}] (v1)--(v2);
            \draw[postaction={decorate}] (v2)--(v3);
            \draw[postaction={decorate}] (v3)--(v1);
    
            \tikzmath{\h1=\h1+\w;\v1=0;};
            \node at (\h1-0.25,0) {$\Bigg($};
            \node at (\h1+2*\sqtbt+0.5,0) {$\Bigg)^\circ\hspace{-0.2cm}=$};
            \vertex{1}[(\h1,\v1)]{(0,0)};
            \vertex{2}[(\h1,\v1)]{(\sqtbt,0.5)};
            \vertex[black][-3pt][270]{3}[(\h1,\v1)]{(\sqtbt,-0.5)};
            \vertex{4}[(\h1,\v1)]{(2*\sqtbt,0)};
            \draw (v1)--(v2);
            \draw (v2)--(v3);
            \draw (v3)--(v4);
            \draw (v1)--(v3);
            \tikzmath{\h1=\h1+2*\sqtbt+1.15;\v1=0;};
            \vertex{1}[(\h1,\v1)]{(0,0)};
            \vertex{2}[(\h1,\v1)]{(\sqtbt,0.5)};
            \vertex[black][-3pt][270]{3}[(\h1,\v1)]{(\sqtbt,-0.5)};
            \draw[postaction={decorate}] (v1)--(v2);
            \draw[postaction={decorate}] (v2)--(v3);
            \draw[postaction={decorate}] (v1)--(v3);
            \draw[postaction={decorate}] (v3) to[out=180, in=-60] (v1);
            \tikzmath{\h1=\h1+\w;\v1=0;};
            \node at (\h1-0.25,0) {$\Bigg($};
            \node at (\h1+2*\sqtbt+0.5,0) {$\Bigg)^\circ\hspace{-0.2cm}=$};
            \vertex{1}[(\h1,\v1)]{(0,0)};
            \vertex{2}[(\h1,\v1)]{(\sqtbt,0.5)};
            \vertex[black][-3pt][270]{3}[(\h1,\v1)]{(\sqtbt,-0.5)};
            \vertex{4}[(\h1,\v1)]{(2*\sqtbt,0)};
            \draw (v1)--(v2);
            \draw (v2)--(v3);
            \draw (v3)--(v4);
            \draw (v1)--(v3);
            \draw (v2)--(v4);
            \tikzmath{\h1=\h1+2*\sqtbt+1.15;\v1=0;};
            \vertex{1}[(\h1,\v1)]{(0,0)};
            \vertex{2}[(\h1,\v1)]{(\sqtbt,0.5)};
            \vertex[black][-3pt][270]{3}[(\h1,\v1)]{(\sqtbt,-0.5)};
            \draw[postaction={decorate}] (v1)--(v2);
            \draw[postaction={decorate}] (v2)--(v3);
            \draw[postaction={decorate}] (v1)--(v3);
            \draw[postaction={decorate}] (v3) to[out=180, in=-60] (v1);
            \draw[postaction={decorate}] (v2) to[out=180, in=60] (v1);
        \end{tikzpicture}$$
        \caption{\label{fig:directedmultigraphs} The directed multigraphs obtained from $P_4$, $K_3+P_2$, and $K_4'$.}
    \end{figure}
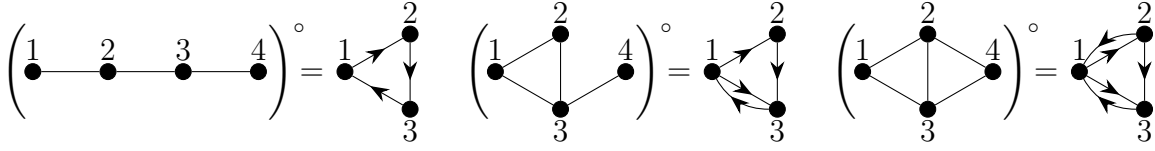
Figure~\ref{fig:directedmultigraphs} shows this construction of $\vec G^\circ$ for the NUIGs $\,G=P_4$, $G=K_3+P_2$, and $G=K_4'$. In each case, the underlying simple graph of $\vec G^\circ$ is $K_3$, and indeed
    \[\trace(F_{P_4})=\trace(F_{K_3+P_2})=\trace(F_{K_4'})=6e_3=X_{K_3}(\bm x).\]
When we calculate the $q$-analogues, multiple edges contribute to the number of ascents, so the $X_{\vec G^\circ}(\bm x;q)$ are no longer the same. It turns out that
    \begin{align*}X_{(\vec P_4)^\circ}(\boldsymbol{x}; q)&=3q[2]_qe_3=\trace(F_{P_4}(q)),\\
    X_{(\overrightarrow{K_3+P_2})^\circ}(\boldsymbol x;q)&=(q^3+4q^2+q)e_3=\trace(F_{K_3+P_2}(q)),\text{ and }\\
    X_{(\vec K_4')^\circ}(\boldsymbol{x};q)&=3q^2[2]_qe_3=\trace(F_{K_4'}(q)),\end{align*}
which is consistent with Conjecture~\ref{conj:qtrace}. We have checked Conjecture~\ref{conj:qtrace} by computer for all NUIGs with at most $8$ vertices.
\end{example}

\printbibliography
\end{document}